\newtheorem{theorem}{\textbf{Theorem}}[section]
\newtheorem{lemma}{\textbf{Lemma}}[section]
\newtheorem{proposition}{\textbf{Proposition}}[section]
\newtheorem{corollary}{\textbf{Corollary}}[section]
\newtheorem{remark}{\textbf{Remark}}[section]
\newtheorem{definition}{\textbf{Definition}}[section]
\def\be{\begin{equation}}
\def\ee{\end{equation}}
\def\bea{\begin{eqnarray}}
\def\eea{\end{eqnarray}}
\def\bt{\begin{theorem}}
\def\et{\end{theorem}}
\def\bl{\begin{lemma}}
\def\el{\end{lemma}}
\def\br{\begin{remark}}
\def\er{\end{remark}}
\def\bp{\begin{proposition}}
\def\ep{\end{proposition}}
\def\bc{\begin{corollary}}
\def\ec{\end{corollary}}
\def\bd{\begin{definition}}
\def\ed{\end{definition}}
\def\vp{\varphi}
\def\non{\nonumber }
\def\ub{\mathbf{u}}
\def\btau{\boldsymbol{\tau}}
\def\BH{\mathbf{H}}
\title{Existence and uniqueness of global weak solutions to a Cahn-Hilliard-Stokes-Darcy system for two phase incompressible  flows in karstic geometry}
\author{
 {\sc Daozhi Han}\footnote{Department of Mathematics, Florida State University, Tallahassee, FL 32306, USA.
Email: \emph{dhan@math.fsu.edu}}\ \
 {\sc Xiaoming Wang}\footnote{ Department of Mathematics, Florida
State University, Tallahassee, FL 32306, USA. Email:
\emph{wxm@math.fsu.edu}}\ \
 and \ {\sc Hao Wu}\footnote{School of Mathematical Sciences and Shanghai Key Laboratory for Contemporary Applied
Mathematics, Fudan University,
Shanghai 200433, China. Email: \emph{haowufd@yahoo.com}}
 }
\date{\today}
\begin{document}
\maketitle
\begin{abstract}
We study the well-posedness of a coupled Cahn-Hilliard-Stokes-Darcy system which is a diffuse-interface  model for essentially immiscible two phase incompressible flows with matched density in a karstic geometry. Existence of finite energy weak solution that is global in time is established in both 2D and 3D. Weak-strong uniqueness property of the weak solutions is provided as well.
\end{abstract}

\begin{keywords}
  Cahn-Hilliard-Stokes-Darcy system, two phase flow, karstic geometry, interface boundary conditions, diffuse interface model, well-posedness\\
  
\end{keywords}



\section{Introduction}
\setcounter{equation}{0}

Applications such as contaminant transport in karst aquifer, oil recovery in karst oil reservoir, proton exchange membrane fuel cell technology and cardiovascular modelling require the coupling of flows in  conduits with those in the surrounding porous media. Geometric configurations that contain both conduit (or vug) and porous media are termed karstic geometry. Moreover, many flows are naturally multi-phase and hence multi-phase flows in the karstic geometry are of interest.
Despite the importance of the subject, little work has been done in this area. Our main goal here is to analyze a diffuse-interface model for two phase incompressible  flows with matched densities in the karstic geometry that was recently derived in \cite{HSW13} via Onsager's extremum principle.

To fix the notation, let us assume that
the two-phase flows are confined in a bounded connected domain $\Omega \subset \mathbb{R}^d$ ($d=2,3$) of $C^{2,1}$ boundary $\partial \Omega$. The unit outer normal at $\partial \Omega$ is denoted by $\mathbf{n}$. The domain $\Omega$ is partitioned into two non-overlapping regions such that $\overline{\Omega}=\overline{\Omega}_c\cup \overline{\Omega}_m$ and $\Omega_c \cap \Omega_m= \emptyset$, where $\Omega_c$ and $\Omega_m$ represent the underground
conduit (or vug) and the porous matrix region, respectively.
We denote $\partial \Omega_c$ and $\partial \Omega_m$ the boundaries of the conduit and the matrix part, respectively. Both $\partial \Omega_c$ and  $\partial \Omega_m$ are assumed to be Lipschitz continuous. The interface between the two parts (i.e., $\partial \Omega_c\cap \partial \Omega_m$) is denoted by $\Gamma_{cm}$, on which $\mathbf{n}_{cm}$ denotes the
unit normal to $\Gamma_{cm}$ pointing from the conduit part to the matrix part. Then we denote $\Gamma_c=\partial \Omega_c\backslash \Gamma_{cm}$ and $\Gamma_m=\partial \Omega_m\backslash \Gamma_{cm}$ with   $\mathbf{n}_c, \mathbf{n}_m$ being the unit outer normals to
$\Gamma_{c}$ and $\Gamma_{m}$. We assume that both $\Gamma_m$ and $\Gamma_{cm}$ have positive measure (namely, $|\Gamma_m|>0$, $|\Gamma_{cm}|>0$) but allow $\Gamma_c=\emptyset$, i.e. $\Omega_c$ can be enclosed completely by $\Omega_m$.  A two dimensional geometry is illustrated in Figure \ref{domain}. When $d=3$, we also assume that the surfaces $\Gamma_c$, $\Gamma_m$, $\Gamma_{cm}$ have Lipschitz continuous boundaries.
On the conduit/matrix interface $\Gamma_{cm}$, we denote by $\{\btau_i\}$ $(i=1,...,d-1)$ a local orthonormal
basis for the tangent plane to $\Gamma_{cm}$.
\begin{figure}[ht]
  \begin{center}
    \includegraphics[width=0.8\textwidth]{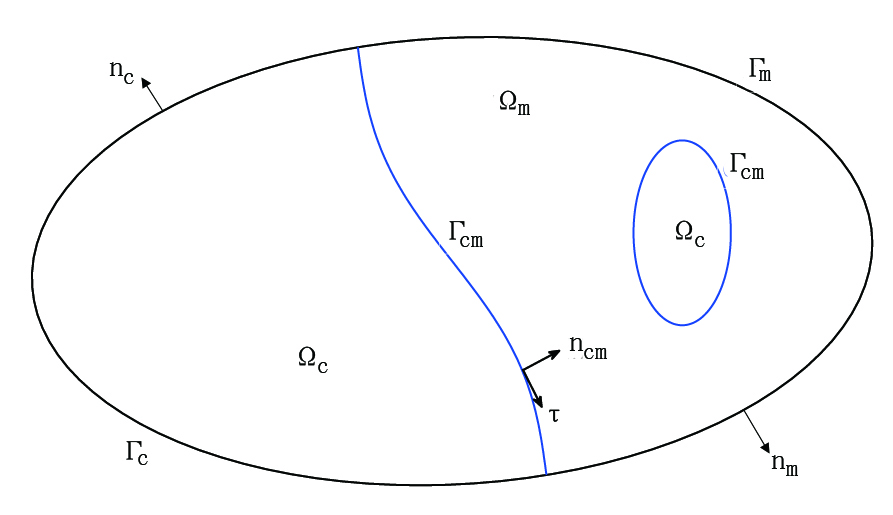}
  \end{center}
  \caption{Schematic illustration of the domain in 2D}
  \label{domain}
\end{figure}
\\
\noindent In the sequel, the subscript $m$ (or $c$) emphasizes that the variables are for the matrix part (or the conduit part). We denote by $\ub$ the mean velocity of the fluid mixture and $\vp$ the phase function related to the concentration of the fluid (volume fraction). The following convention will be assumed throughout the paper
$$
\ub|_{\Omega_m}=\ub_m, \ \ \ub|_{\Omega_c}=\ub_c,\ \ \varphi|_{\Omega_m}=\varphi_m, \ \ \varphi|_{\Omega_c}=\varphi_c.
$$

\textbf{Governing PDE system}. To the best of our knowledge, the first diffuse-interface model for incompressible two-phase flows in karstic geometry with matched densities was recently derived in \cite{HSW13} by utilizing Onsager's extremal principle (see references therein). Our aim in this paper is to study its well-posedness. Indeed, we can perform the analysis for a more general system, in which the Stokes equation can also be time-dependent. Thus, we shall consider the following Cahn-Hilliard-Stokes-Darcy system (CHSD for brevity) coupled through a set of interface boundary conditions (see \eqref{IBCi1}--\eqref{IBCi7} below):
\begin{eqnarray}
&&\rho_0\varpi\partial_t \ub_c=\nabla\cdot\mathbb{T}(\ub_c,P_c)+\mu_c\nabla\varphi_c,\ \ \mbox{in}\ \Omega_c,\label{HSCH-NSCH1}\\
&&\nabla\cdot\ub_c=0,\ \ \mbox{in}\ \Omega_c,\label{HSCH-NSCH2}\\
&&\partial_t\varphi_c+\ub_c\cdot\nabla\varphi_c={\rm div}({\rm M}(\vp_c)\nabla\mu_c),\ \ \mbox{in}\ \Omega_c,\label{HSCH-NSCH3}\\
&&\ub_m=-\frac{\rho_0 g\Pi}{\nu(\varphi_m)}\left(\nabla P_m-\mu_m\nabla\varphi_m\right),\ \ \mbox{in}\ \Omega_m,\label{HSCH-NSCH4}\\
&&\nabla\cdot\ub_m=0,\ \ \mbox{in}\ \Omega_m, \label{HSCH-NSCH5}\\
&&\partial_t\varphi_m+\ub_m\cdot\nabla\varphi_m={\rm div}({\rm M}(\vp_m)\nabla\mu_m),\ \ \mbox{in}\ \Omega_m, \label{HSCH-NSCH6}
\end{eqnarray}
where the chemical potentials $\mu_c, \mu_m$ are given by
\be
 \mu_j=\gamma \Big(-\epsilon\Delta\varphi_j+\frac{1}{\epsilon}(\vp_j^3-\vp_j)\Big), \quad \quad j\in\{c, m\} \label{chemp}.
 \ee
Here, the parameter $\varpi$ in \eqref{HSCH-NSCH1} is a nonnegative constant. When $\varpi=0$, the system \eqref{HSCH-NSCH1}--\eqref{HSCH-NSCH6} reduces to the CHSD system derived in \cite{HSW13}. $\rho_0$ represents the fluid density, and $g$ is the gravitational constant. The parameter $\gamma>0$ is related to the surface tension.  
We remark that the Stokes equation \eqref{HSCH-NSCH1} can be viewed as low Reynolds number approximation of the Navier-Stokes equation, while the Darcy equation \eqref{HSCH-NSCH4} can be viewed as the quasi-static approximation for the saturated flow model under
the assumption that the porous media pressure adjusts instantly to changes in the fluid velocity \cite{CR08,BDQ10}.

In the diffuse-interface model of immiscible two phase flows, the chemical potential $\mu$ (see Eq. \eqref{chemp}) is given by the
variational derivative of the following free energy functional
 \be
 E(\vp):=\gamma \int_\Omega \left(\frac{\epsilon}{2}|\nabla \vp|^2 +\frac{1}{\epsilon} F(\vp)\right)dx, \label{E}
 \ee
 where $F(\vp)$ is the Helmholtz free energy and usually taken to be a non-convex function of $\vp$ for immiscible two phase flows, e.g., a double-well polynomial of Ginzburg-Landau type in our present case:
 \be
 F(\vp)=\frac1{4}(\vp^2-1)^2. \label{fen}
 \ee
Singular potential of Flory-Huggins type can be treated as well, see for instance \cite{Boyer1999}.
The first term (i.e., the gradient part) of $E$ is a
diffusion term that represents the hydrophilic part of the free-energy, while the second term (i.e., the bulk part) expresses the hydrophobic part of the free-energy.
The small constant $\epsilon$  in \eqref{E} is the capillary width of the binary mixture. As the constant $\epsilon \to 0$, $\vp$ will approach $1$ and $-1$ almost everywhere, and the contribution due to the induced stress will converge to a measure-valued force term supported only on the interface between regions $\{\vp = 1\}$ and
$\{\vp = -1\}$ (cf. \cite{LS03, AbLe2012}).   The nonlinear terms $\mu_c\nabla \vp_c$ and $\mu_m\nabla \vp_m$ in the convective Cahn-Hilliard equations \eqref{HSCH-NSCH3} and \eqref{HSCH-NSCH6} can be interpreted as the ``elastic" force (or Korteweg force) exerted by the
diffusive interface of the two phase flow. This ``elastic" force converges to the surface tension at sharp interface limit $\epsilon \to 0$ at least heuristically (cf. e.g., \cite{LS03,LT98}). Since the value of $\gamma$ does not affect the analysis, we simply set $\gamma=1$ throughout the rest of the paper.
Likewise, we set the fluid density $\rho_0$ and gravitational constant $g$ to be $1$ without loss of generality.

The two phase flow in the conduit part and matrix part is described by the Stokes equation \eqref{HSCH-NSCH1} and the Darcy equation \eqref{HSCH-NSCH4}, respectively. In  \eqref{HSCH-NSCH1}, the Cauchy stress tensor $\mathbb{T}$ is given by
$$\mathbb{T}(\ub_c,p_c) = 2\nu(\vp_c)\mathbb{D}(\ub_c)-P_c\mathbb{I}$$
where $\mathbb{D}(\ub_c)=\frac{1}{2}(\nabla \ub_c + \nabla^T \ub_c)$ is the symmetric rate of deformation tensor and $\mathbb{I}$ is the $d\times d$ identity matrix. Besides, $P_c$ and $P_m$ stand for the modified pressures that also absorb the effects due to gravitation. The viscosity and the mobility of the CHSD model are denoted by $\nu$ and $\mathrm{M}$, respectively. They are assumed to be suitable functions that may depend on the phase function $\vp$ (see Section \ref{WM}). 
$M(\varphi)$  is taken to be the same (function of the phase function) for the conduit and the matrix for simplicity.
 In Eq. \eqref{HSCH-NSCH4}, $\Pi$ is a $d\times d$ matrix standing for permeability of the porous media. It is related to the hydraulic conductivity tensor of the porous medium $\mathbb{K}$ through the relation $\Pi=\frac{\nu\mathbb{K}}{\rho_0 g}$. In the literature,  $\mathbb{K}$ is usually assumed to be a bounded, symmetric and uniformly positive definite matrix but could be heterogeneous \cite{Bear}.

Next, we describe the initial  boundary (or interface) conditions of the CHSD system \eqref{HSCH-NSCH1}--\eqref{HSCH-NSCH6}.

\textbf{Initial conditions}. The CHSD System \eqref{HSCH-NSCH1}--\eqref{HSCH-NSCH6} is subject to the initial conditions
\bea
&& \ub_c|_{t=0}=\ub_0(x),  \quad \text{in} \Omega_c,\label{ini1}\\
&& \vp|_{t=0}=\vp_{0}(x), \quad \text{in} \Omega. \label{ini2}
\eea
 In particular, when $\varpi=0$, we do not need the initial condition \eqref{ini1} for $\ub_c$. \medskip

\textbf{Boundary conditions on $\Gamma_c$ and $\Gamma_m$}. The boundary conditions on $\Gamma_c$ and $\Gamma_m$ take the following form:
\begin{eqnarray}
&& \ub_c=\mathbf{0},\qquad \text{on}\ \Gamma_c,\label{IBC0}\\
&& \ub_m\cdot\mathbf{n}_m=0,\quad \text{on}\ \Gamma_m,\label{IBC1}\\
&&\frac{\partial\varphi_c}{\partial \mathbf{n}_c}=\frac{\partial\mu_c}{\partial \mathbf{n}_c}=0,\quad \text{on}\ \Gamma_c,\label{IBC2}\\
&&\frac{\partial\varphi_m}{\partial \mathbf{n}_m}=\frac{\partial\mu_m}{\partial \mathbf{n}_m}=0, \quad \text{on}\ \Gamma_m.\label{IBC3}
\end{eqnarray}

\textbf{Interface conditions on $\Gamma_{cm}$}. The CHSD system \eqref{HSCH-NSCH1}--\eqref{HSCH-NSCH6} are coupled through the following set of interface conditions:
\begin{eqnarray}
 &&\varphi_m =\varphi_c,\quad \mbox{on}\ \Gamma_{cm},\label{IBCi1}\\
&&\mu_m=\mu_c,\quad \mbox{on}\ \Gamma_{cm},\label{IBCi3}\\
&& \frac{\partial \vp_m}{\partial \mathbf{n}_{cm}}=\frac{\partial \vp_c}{\partial \mathbf{n}_{cm}},\ \ \mbox{on}\ \Gamma_{cm}\\
&& \frac{\partial \mu_m}{\partial \mathbf{n}_{cm}}=\frac{\partial \mu_c}{\partial \mathbf{n}_{cm}},\ \ \mbox{on}\ \Gamma_{cm}\label{IBCi4}\\
&&\ub_m\cdot\mathbf{n}_{cm}=\ub_c\cdot\mathbf{n}_{cm},\quad \mbox{on}\ \Gamma_{cm},\label{IBCi5}\\
&&-\mathbf{n}_{cm}\cdot(\mathbb{T}(\ub_c,P_c){\mathbf{n}_{cm}})=P_m,\ \ \mbox{on}\ \Gamma_{cm},\label{IBCi6}\\
&& -\btau_i\cdot(\mathbb{T}(\ub_c,P_c){\mathbf{n}_{cm}})=\alpha_{BJSJ}\frac{\nu(\vp_m)}{\sqrt{\text{trace}(\Pi)}}\btau_i\cdot\ub_c,\ \ \mbox{on}\ \Gamma_{cm},\label{IBCi7}
\end{eqnarray}
for $i=1,..,d-1$.

The first four interface conditions \eqref{IBCi1}--\eqref{IBCi4} are simply the continuity conditions for the phase function, the chemical potential and their normal derivatives, respectively. Condition \eqref{IBCi5} indicates the continuity in normal velocity that guarantees the conservation of mass, i.e., the exchange of fluid
between the two sub-domains is conservative. Condition \eqref{IBCi6}
represents the balance of two driving forces, the pressure
in the matrix and the normal component of the normal stress of the free flow in the conduit, in the normal direction along the interface. The last interface condition \eqref{IBCi7} is
the so-called
Beavers-Joseph-Saffman-Jones (BJSJ) condition (cf. \cite{Jones, Saffman}), where $\alpha_{BJSJ}\geq 0$ is an empirical constant determined by the geometry and the porous material. The BJSJ condition is a simplified variant of the well-known Beavers-Joseph (BJ) condition (cf. \cite{B-J})
that addresses the important
issue of how the porous media affects the conduit flow at the
interface:
$$-\btau_i\cdot(2\nu\mathbb
D(\mathbf{u}_c))\mathbf{n}_{cm}=\alpha_{BJ}\frac{\nu}{\sqrt{\text{trace}(\Pi)}}\btau_i\cdot(\ub_c-\ub_m), \ \ \mbox{on}\ \Gamma_{cm}, \ i=1,...,d-1.
$$
 This empirical condition essentially claims that the
tangential component of the normal stress that the free flow incurs
along the interface is proportional to the jump in the tangential
velocity over the interface. To get the BJSJ condition, the term $-\btau_i\cdot \ub_m$ on the right hand
side is simply dropped from the corresponding BJ condition.
Mathematically rigorous justification of this simplification under appropriate assumptions can be found in \cite{J-M}.

There is an abundant literature on mathematical studies of single component flows in karstic geometry \cite{CR08,DMQ2002, DiQu2003, DiQu2009, LSY2002, CGHW10, CGW10, CGHW11, ChGHW11, CGSW2013,  CR09, CR12, CGR13}.
Those aforementioned mathematical works on the flows in karst aquifers treat the case of confined saturated aquifer where only one type of fluid (e.g., water) occupies the whole region exclusively. The mathematical analysis is already a challenge due to the complicated coupling of the flows in the conduits and the surrounding matrix, which  are governed by different physical processes, the complex geometry of the network of conduits  as well as the strong heterogeneity.

The current work contributes to, to the authors' best knowledge, a first rigorous mathematical analysis of the diffuse-interface model for two phase incompressible flows in the karstic geometry. In particular, we prove the existence of global finite energy solutions in the sense of Definition \ref{defweak} to the CHSD system \eqref{HSCH-NSCH1}--\eqref{IBCi7} (see Theorem \ref{thmEx}). The proof is based on a novel semi-implicit discretization in time numerical scheme \eqref{app1}--\eqref{ConSp} that satisfies a discrete version of the dissipative energy law \eqref{EnergyLaw} (see Proposition \ref{DEE} below). One can thus establish the existence of weak solutions to the resulting nonlinear elliptic system via the Leray-Schauder degree theory (cf. \cite{De,Ab09}). Then the existence of global finite energy  solutions to the original CHSD system follows from a suitable compactness argument. We point out that our numerical scheme \eqref{app1}--\eqref{ConSp} differs from the one proposed and studied by Feng and Wise \cite{FeWi2012} (for the Cahn-Hilliard-Darcy system in simple domain) in the sense that, among others, both the elastic forcing term $\mu \nabla \varphi$ in the Stokes/Darcy equations and the convection term $\mathbf{u}\cdot \nabla \vp$ in the Cahn-Hilliard equation are treated in a fully implicit way. As a consequence, we are able to prove the existence of finite energy solutions  by only imposing the initial data $\varphi_0 \in H^1(\Omega)$, whereas in \cite{FeWi2012}  the authors have to assume $\varphi_0 \in H^2(\Omega)$ (or at least $H^1(\Omega) \cap L^\infty(\Omega)$), which is not natural in view of the basic energy law \eqref{EnergyLaw}. On the other hand, this choice of discretization brings extra difficulties such that neither the variational approach in \cite{Wise10, FeWi2012} nor the monotonicity method devised in \cite{HaWa2014} can be applied. Besides, the complexity of the domain geometry also motivates us to introduce an equivalent norm for the velocity field (Eq. \eqref{equinorm}), which is necessary for the analysis in the case of stationary Stokes equation ($\varpi=0$). After the existence result is obtained, a weak-strong uniqueness property of the weak solutions is shown via the energy method (cf. Theorem \ref{thmuni} for the precise statement). We note that existence and uniqueness of strong solutions to the coupled CHSD system \eqref{HSCH-NSCH1}--\eqref{IBCi7} is beyond the scope of this manuscript and will be addressed in a forthcoming work.

It is worth mentioning that there are a lot of works on diffuse-interface models for immiscible two phase incompressible flow with matched densities in a single domain setting. For instance, concerning the Cahn-Hilliard-Navier-Stokes system (Model H), existence of weak solutions, existence and uniqueness of strong solutions and long time dynamics are established in \cite{Boyer1999, Abels2009, ZWH2009, GaGr2010} and references therein. As for the Cahn-Hilliard-Darcy (also referred to as Cahn-Hilliard-Hele-Shaw) system in porous media or in the Hele-Shaw cell, the readers are referred to \cite{FeWi2012, WaZh2013,  WaWu2012, LTZ2013, DFW2014} for latest results.

The rest of this paper is organized as follows. In Section 2, we first introduce the appropriate functional spaces and derive a dissipative energy law associated with the CHSD system \eqref{HSCH-NSCH1}--\eqref{IBCi7}. After that, we present the definition of suitable weak solutions and state the main results of this paper. Section 3 is devoted to the existence of global finite energy weak solutions. We first obtain the existence of weak solutions to an implicit time-discretized system by the Leray-Schauder degree theory. Then the existence of finite energy weak solutions to the original CHSD system follows from a compactness argument. Finally, in Section 4 we prove the weak-strong uniqueness property of the weak solutions.

\section{Preliminaries and Main Results}
\setcounter{equation}{0}

\subsection{Functional spaces}
 We first introduce some notations.
If $X$ is a Banach space and $X'$ is its dual, then
$\langle u, v\rangle \equiv \langle u, v\rangle_{X'
,X} $ for $u \in X'$, $v\in X$ denotes the duality product. The inner product on a Hilbert space $H$ is denoted by $(\cdot, \cdot)_H$.
Let $\Omega\subset \mathbb{R}^d$ be a bounded domain, then $L^q(\Omega)$, $1 \leq q \leq\infty$ denotes the usual
Lebesgue space and $\|\cdot\|_{L^q(\Omega)}$ denotes its norm. Similarly, $W^{m,q}(\Omega)$, $m \in \mathbb{N}$, $1 \leq q \leq \infty$, denotes the usual
Sobolev space with norm $\|\cdot \|_{W^{m,p}(\Omega)}$. When $q=2$, we simply denote $W^{m,2}(\Omega)$ by $H^m(\Omega)$. Besides, the fractional order Sobolev spaces $H^s(\Omega)$ ($s\in \mathbb{R}$) are defined as in \cite[Section 4.2.1]{Tri}.  If $I$ is an interval of $\mathbb{R}^+$ and $X$ a Banach space, we use the function space $L^p(I;X)$, $1 \leq p \leq +\infty$, which consists of $p$-integrable
functions with values in $X$. Moreover, $C_w(I;X)$ denotes the topological vector space of all bounded and weakly continuous functions from $I$ to $X$, while $W^{1,p}(I,X)$ $(1\leq q<+\infty)$ stands for the space of all functions $u$ such that $u, \frac{du}{dt}\in L^p(I;X)$, where $\frac{du}{dt}$ denotes the vector valued distributional derivative of $u$. Bold characters are used to denote vector spaces.

Given $v \in L^1(\Omega)$, we denote by $\overline{v} = |\Omega|^{-1}\int_\Omega v(x)dx$ its mean value. Then we define the space
$\dot L^2(\Omega) := \{ v \in L^2(\Omega): \overline{v}=0\}$ and $\dot v=\mathrm{P}_0 v := v - \overline{v}$ the orthogonal projection onto $\dot L^2(\Omega)$. Furthermore, we denote $\dot H^1(\Omega)=H^1(\Omega)\cap \dot L^2(\Omega)$, which is a Hilbert space with inner product $(u, v)_{\dot H^1}=\int_\Omega \nabla u\cdot \nabla v dx$ due to the classical Poincar\'e inequality for functions with zero mean. Its dual space is simply denoted by $\dot{H}^{-1}(\Omega)$.

For our CHSD problem with domain decomposition, we introduce the following spaces
\bea
 \BH({\rm div}; \Omega_j) &:=&\{\mathbf{w}\in \mathbf{L}^2(\Omega_j)~|~\nabla \cdot \mathbf{w}\in \mathbf{L}^2(\Omega_j)\}, \quad \quad j\in \{c,m\},\non\\
\mathbf{H}_{c,0}&:=&\{\mathbf{w}\in \BH^1(\Omega_c)~|~\mathbf{w}=\mathbf{0}\text{ on
}\Gamma_{c}\},
 \non\\
\mathbf{H}_{c,\text{div}}&:=&\{
\mathbf{w}\in\mathbf{H}_{c,0}~|~\nabla \cdot\mathbf{w}=0\}, \nonumber
 \\
\mathbf{H}_{m,0}&:=&\{\mathbf{w}\in \BH({\rm div}; \Omega_m)~|~\mathbf{w}\cdot \mathbf{n}_m=0 \ \text{on}\ \Gamma_{m}\}, \non\\
\mathbf{H}_{m,\mathrm{div}}&:=&\{\mathbf{w}\in\mathbf{H}_{m,0}~|~\nabla \cdot\mathbf{w}=0\},\non\\
 X_m&:=& \dot{H}^1(\Omega_m).\non
 \eea
We denote $(\cdot, \cdot)_c$, $(\cdot, \cdot)_m$ the inner products on the spaces $L^2(\Omega_c)$, $L^2(\Omega_m)$, respectively (also for the corresponding vector spaces). The inner product on $L^2(\Omega)$ is simply denoted by $(\cdot, \cdot)$. Then it is clear that
$$
(u,v)=(u_m,v_m)_m+(u_c,v_c)_c, \quad \|u\|_{L^2(\Omega)}^2=\|u_m\|_{L^2(\Omega_m)}^2+\|u_c\|_{L^2(\Omega_c)}^2,
$$
where $u_m:=u|_{\Omega_m}$ and $u_c:=u|_{\Omega_c}$.

On the interface $\Gamma_{cm}$, we consider the fractional Sobolev spaces
$H^{\frac12}_{00}(\Gamma_{cm})$ and $H^\frac12(\Gamma_{cm})$ for (Lipschitz) surface $\Gamma_{cm}$ when $d=3$ or curve when $d=2$ with the following equivalent norms (see \cite[pp.-66]{L-M}, or \cite{Grisvard1985}):
\begin{eqnarray}
&& \|u\|_{H^\frac12(\Gamma_{cm})}^2=\int_{\Gamma_{cm}} |u|^2 dS +\int_{\Gamma_{cm}}\int_{\Gamma_{cm}}\frac{|u(x)-u(y)|^2}{|x-y|^d} dxdy,\nonumber\\
&& \|u\|_{H_{00}^\frac12(\Gamma_{cm})}^2=\|u\|_{H^\frac12(\Gamma_{cm})}^2
+\int_{\Gamma_{cm}} \frac{|u(x)|^2}{\rho(x, \partial \Gamma_{cm})} dx,\nonumber
\end{eqnarray}
where $\rho(x, \partial \Gamma_{cm})$ denotes the distance from $x$ to $\partial \Gamma_{cm}$. The above norms are not equivalent except when $\Gamma_{cm}$  is a closed surface or curve (cf. \cite{CGR13}). If $\Gamma_{cm}$ is a subset of $\partial \Omega_c$ with positive measure, then $H^{\frac12}_{00}(\Gamma_{cm})$ is a trace space of functions of $H^1(\Omega_c)$ that vanish on $\partial \Omega_c\setminus \Gamma_{cm}$. Similarly in the vectorial case, we have $\BH^{\frac12}_{00}(\Gamma_{cm})=\BH_{c,0}|_{\Gamma_{cm}}$.  $H^{\frac12}_{00}(\Gamma_{cm})$ is a non-closed subspace of
$H^{\frac12}(\Gamma_{cm})$ and has a continuous zero extension to
$H^{\frac12}(\partial\Omega_{c})$.
For $H^{\frac12}_{00}(\Gamma_{cm})$, we have the following continuous embedding result (cf. \cite{CGHW10}): $H^{\frac12}_{00}(\Gamma_{cm})\subsetneqq H^{\frac12}(\Gamma_{cm}) \subsetneqq H^{-\frac12}(\Gamma_{cm}) \subsetneqq(H^{\frac12}_{00}(\Gamma_{cm}))'$.
We note that
$
H^{-\frac12}(\partial\Omega_c)|_{\Gamma_{cm}}\nsubseteq H^{-\frac12}(\Gamma_{cm})$ and $H^{-\frac12}(\partial\Omega_c)|_{\Gamma_{cm}}\subset(H^{\frac12}_{00}(\Gamma_{cm}))'$, where the space
$H^{-\frac12}(\partial\Omega_c)|_{\Gamma_{cm}}$ is defined in the
following way: for all $f\in H^{-\frac12}(\partial\Omega_c)|_{\Gamma_{cm}}$ and $ g\in
H^{\frac12}_{00}(\Gamma_{cm})$,
$\langle f,g\rangle_{H^{-\frac12}(\partial\Omega_c)|_{\Gamma_{cm}},\,
H^{\frac12}_{00}(\Gamma_{cm})}:=\langle f,\widetilde g\rangle_{H^{-\frac12}(\partial\Omega_c),\,
H^{\frac12}(\partial\Omega_c)}$ with $\widetilde g$ being the zero
extension of $g$ to $\partial\Omega_c$.

For any $\mathbf{u} \in \mathbf{H}({\rm div}, \Omega_c)$, its normal component  $\mathbf{u} \cdot \mathbf{n}_{cm}$ is well defined in $(H^{\frac12}_{00}(\Gamma_{cm}))^\prime$,  and for all $q \in H^1(\Omega_c)$ such that $q =0$ on $\partial \Omega_c \backslash \Gamma_{cm}$, we have
\begin{align}
(\nabla \cdot \mathbf{u}, q)_c=(\mathbf{u}, \nabla q)_c+\langle \mathbf{u}\cdot \mathbf{n}_{cm}, q\rangle_{(H^{\frac12}_{00}(\Gamma_{cm}))^\prime,\,  H^{\frac12}_{00}(\Gamma_{cm})}.\non
\end{align}
Similar identity holds on the matrix domain $\Omega_m$.

\subsection{Basic energy law}
An important feature of the CHSD system \eqref{HSCH-NSCH1}--\eqref{IBCi7} is that it obeys a dissipative energy law. To this end, we first note that the total energy of the coupled system is given by:
\be
\mathcal{E}(t)=\int_{\Omega_c}\frac{\varpi}{2}|\ub_c|^2 dx+\int_{\Omega}\left[\frac{\epsilon}{2}|\nabla\varphi|^2+\frac{1}{\epsilon}F(\varphi)\right]dx.\label{totenergy}
\ee
Then we have the following formal result:
\begin{lemma}[Basic energy law]\label{BEL}
Let $(\ub_m, \ub_c, \vp)$ be a smooth solution to the initial boundary value problem \eqref{HSCH-NSCH1}--\eqref{IBCi7}. Then $(\ub_m,  \ub_c, \vp)$ satisfies the following basic energy law:
\be
\frac{d}{dt} \mathcal{E}(t)=-\mathcal{D}(t) \le 0,\quad \forall\, t\geq 0, \label{EnergyLaw}
\ee
 where the energy dissipation $\mathcal{D}$ is given by
\bea
\mathcal{D}(t)&=& \int_{\Omega_m} \nu(\varphi_m)\Pi^{-1}|\ub_m|^2dx
+\int_{\Omega_c}2\nu(\vp_c)|\mathbb{D}(\ub_c)|^2dx
\non\\
&& +\int_{\Omega}{\rm M}(\vp)|\nabla\mu(\vp)|^2dx
+\frac{\alpha_{BJSJ}}{\sqrt{{\rm trace}(\Pi)}}\sum_{i=1}^{d-1}\int_{\Gamma_{cm}} \nu(\vp)|\ub_c\cdot\btau_i|^2 dS.\label{D}
\eea
\end{lemma}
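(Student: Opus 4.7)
The plan is to derive the identity by testing each evolution equation against its natural ``conjugate'' variable and then showing that all boundary contributions on $\Gamma_c$, $\Gamma_m$, and $\Gamma_{cm}$ either vanish or combine into the dissipation $\mathcal{D}(t)$. First I would multiply the Stokes equation \eqref{HSCH-NSCH1} by $\ub_c$ and integrate over $\Omega_c$. Integration by parts on $\nabla\cdot\mathbb{T}(\ub_c,P_c)$, combined with the incompressibility \eqref{HSCH-NSCH2} and the no-slip condition \eqref{IBC0} on $\Gamma_c$, converts the stress term into $-2\int_{\Omega_c}\nu(\vp_c)|\mathbb{D}(\ub_c)|^2\,dx$ plus the interface integral $\int_{\Gamma_{cm}}\ub_c\cdot\mathbb{T}(\ub_c,P_c)\Bn_{cm}\,dS$. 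Decomposing $\ub_c$ on $\Gamma_{cm}$ into its normal and tangential components and invoking \eqref{IBCi6}--\eqref{IBCi7}, this boundary integral becomes $-\int_{\Gamma_{cm}}P_m(\ub_c\cdot\Bn_{cm})\,dS$ minus the BJSJ term that appears in $\mathcal{D}(t)$, leaving the Korteweg integral $(\mu_c\nabla\vp_c,\ub_c)_c$ on the right-hand side.

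Next I would dot the Darcy law \eqref{HSCH-NSCH4} with $\ub_m$ to obtain
\begin{equation*}
\int_{\Omega_m}\nu(\vp_m)\Pi^{-1}|\ub_m|^2\,dx=-(\nabla P_m,\ub_m)_m+(\mu_m\nabla\vp_m,\ub_m)_m.
\end{equation*}
Using \eqref{HSCH-NSCH5}, the no-flux condition \eqref{IBC1}, and the continuity of normal velocity \eqref{IBCi5}, the pressure term reduces to $+\int_{\Gamma_{cm}}P_m(\ub_c\cdot\Bn_{cm})\,dS$, which exactly cancels the surviving pressure integral generated by the Stokes computation. Care must be taken here with the sign of the outward normal, since $\Bn_m=-\Bn_{cm}$ on $\Gamma_{cm}$.

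For the Cahn--Hilliard part I would test \eqref{HSCH-NSCH3} and \eqref{HSCH-NSCH6} with $\mu_c$ and $\mu_m$ respectively, then sum. Applying the divergence theorem on each subdomain to $({\rm div}({\rm M}(\vp)\nabla\mu),\mu)$ produces $-\int_\Omega{\rm M}(\vp)|\nabla\mu|^2\,dx$ plus boundary integrals that vanish on $\Gamma_c\cup\Gamma_m$ by \eqref{IBC2}--\eqref{IBC3} and on $\Gamma_{cm}$ by the two continuity conditions \eqref{IBCi3} and \eqref{IBCi4} (after accounting for the opposite orientations of the outward normals). The remaining integral $\int_\Omega\mu\,\partial_t\vp\,dx$ is then identified with $\tfrac{d}{dt}E(\vp)$: differentiating \eqref{E}, integrating by parts on each subdomain, and using \eqref{IBC2}--\eqref{IBC3} together with $\vp_c=\vp_m$ (hence $\partial_t\vp_c=\partial_t\vp_m$) and the matching of $\partial_{\Bn_{cm}}\vp$ across $\Gamma_{cm}$, one verifies that the boundary contributions cancel exactly. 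The convective contributions $(\ub_c\cdot\nabla\vp_c,\mu_c)_c+(\ub_m\cdot\nabla\vp_m,\mu_m)_m$ generated at this step precisely cancel the Korteweg integrals left over from the Stokes and Darcy steps.

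Adding the three identities then gives $\frac{d}{dt}\mathcal{E}(t)=-\mathcal{D}(t)$. The computation itself is essentially routine integration by parts; the only real obstacle is the bookkeeping on $\Gamma_{cm}$, where one must consistently choose $\Bn_{cm}$ (pointing from the conduit to the matrix) as a reference and carefully account for the opposite outward orientation on $\partial\Omega_m$, so that each interface integral pairs correctly with the interface condition that annihilates it. Once the sign conventions are fixed, the cancellations of the pressure, Korteweg, $\vp$-flux, and $\mu$-flux boundary terms are immediate, and only the four dissipation integrals in \eqref{D} remain.
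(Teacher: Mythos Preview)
Your proposal is correct and follows essentially the same route as the paper's proof: test the Stokes equation against $\ub_c$, the Darcy law against $\ub_m$, and the Cahn--Hilliard equations against $\mu$, then use the interface conditions \eqref{IBCi1}--\eqref{IBCi7} together with the orientation convention for $\Bn_{cm}$ to cancel the pressure, Korteweg, $\vp$-flux and $\mu$-flux boundary terms on $\Gamma_{cm}$, leaving only the four dissipation integrals. The only cosmetic difference is that the paper groups the computations by subdomain (conduit part first, matrix part second) rather than by equation type as you do, but the substance of the argument and the cancellations are identical.
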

\begin{proof}
For the conduit part, multiplying the equations \eqref{HSCH-NSCH1}, \eqref{HSCH-NSCH3} by $\ub_c$ and  $\mu(\varphi_c)$, respectively,  integrating over $\Omega_c$, and adding the resultants together, we get
\bea
&&\frac{d}{dt}\int_{\Omega_c}\frac{\varpi}{2}|\ub_c|^2 dx +\int_{\Omega_c} \partial_t\varphi_c\mu(\vp_c)dx \non\\
& =& \int_{\Omega_c} [\nabla\cdot\mathbb{T}(\ub_c,P_c)]\cdot\ub_c dx+\int_{\Omega_c}\mu(\vp_c){\rm div}({\rm M}(\vp_c)\nabla\mu(\vp_c))dx.\non
\eea
After integration by parts and using the boundary conditions, we obtain that
\bea
&&\frac{d}{dt}\int_{\Omega_c}\left[\frac{\varpi}{2}|\ub_c|^2 + \frac{\epsilon}{2}|\nabla\varphi_c|^2+\frac{1}{\epsilon}F(\varphi_c)\right]dx +\int_{\Omega_c}{\rm M}(\vp_c)|\nabla\mu(\vp_c)|^2dx
\non\\
&=&\int_{\Omega_c}[\nabla\cdot\mathbb{T}(\ub_c,P_c)]\cdot\ub_c dx
+\int_{\Gamma_{cm}}{\rm M}(\vp_c)\mu(\vp_c)\frac{\partial\mu(\vp_c)}{\partial \mathbf{n}_{cm}}dS\non\\
&& +\epsilon \int_{\Gamma_{cm}}\partial_t\varphi_c\frac{\partial\varphi_c}{\partial\mathbf{n}_{cm}}dS.
 \label{HSCH-NSCH:NSpart}
\eea
Applying the divergence theorem to the first term on the right-hand side of \eqref{HSCH-NSCH:NSpart}, we infer from the boundary conditions \eqref{IBC0}, \eqref{IBCi6}, \eqref{IBCi7} and the incompressibility condition \eqref{HSCH-NSCH2} that
\begin{eqnarray}
&&
  \int_{\Omega_c}[\nabla\cdot\mathbb{T}(\ub_c,P_c)]\cdot\ub_c dx\non\\
&=&
  \int_{\Gamma_{cm}}(\mathbb{T}(\ub_c,P_c)\mathbf{n}_{cm})\cdot\ub_c dS-\int_{\Omega_c}\mathbb{T}(\ub_c,P_c):\nabla\ub_c dx\non\\
&=&
  \sum_{i=1}^2\int_{\Gamma_{cm}}(\btau_i^T\mathbb{T}(\ub_c,P_c)\mathbf{n}_{cm})(\ub_c\cdot\btau_i) dS\non\\
&&
  +\int_{\Gamma_{cm}}(\mathbf{n}_{cm}^T\mathbb{T}(\ub_c,P_c)\mathbf{n}_{cm})(\ub_c\cdot\mathbf{n}_{cm}) dS\non\\
&&
  -\int_{\Omega_c}(2\nu(\vp_c)\mathbb{D}(\ub_c)-P_c \mathbb{I}):\nabla\ub_c dx\non\\
&=&
  -\frac{\alpha_{BJSJ}}{\sqrt{{\rm trace}(\Pi)}}\sum_{i=1}^{d-1}\int_{\Gamma_{cm}} \nu(\vp_m)|\ub_c\cdot\btau_i|^2 dS-\int_{\Gamma_{cm}}P_m(\ub_c\cdot\mathbf{n}_{cm}) dS\non\\
&&
  -\int_{\Omega_c}2\nu(\vp_c)|\mathbb{D}(\ub_c)|^2dx.
\label{HSCH-NSCH:NSpart:tensor}
\end{eqnarray}

Next, we consider the matrix part. Multiplying the equation \eqref{HSCH-NSCH6} by $\mu(\varphi_m)$ and integrating over $\Omega_m$, we get
\begin{eqnarray}
  \int_{\Omega_m}\partial_t\varphi_m\mu(\varphi_m)+(\ub_m\cdot\nabla\varphi_m)\mu(\varphi_m)dx= \int_{\Omega_m}\mu(\varphi_m){\rm div}({\rm M}(\vp_m)\nabla \mu(\varphi_m)) dx.
\end{eqnarray}
On the other hand, we infer from the Darcy equation \eqref{HSCH-NSCH1} that
 $$
 \mu(\varphi_m)\nabla \varphi_m=\nu(\varphi_m)\Pi^{-1}\ub_m+\nabla P_m.
 $$
Using this fact and integration by parts, we infer from the boundary condition \eqref{IBC3} that
\begin{eqnarray}
 \label{HSCH-NSCH:HSpart}
&&
 \frac{d}{dt}\int_{\Omega_m}\left[\frac{\epsilon}{2}|\nabla\varphi_m|^2+\frac{1}{\epsilon}F(\varphi_m)\right]dx \non\\
&&
 + \epsilon \int_{\Gamma_{cm}}\partial_t\varphi_m\frac{\partial\varphi_m}{\partial\mathbf{n}_{cm}}dS + \int_{\Omega_m}\left( \nu(\varphi_m)\Pi^{-1}|\ub_m|^2+ \ub_m\cdot \nabla P_m\right)dx\non\\
&=&
 -\int_{\Gamma_{cm}}{\rm M}(\vp_m)\mu(\vp_m)\frac{\partial\mu(\vp_m)}{\partial \mathbf{n}_{cm}}dS-\int_{\Omega_m}{\rm M}(\vp_m)|\nabla\mu(\vp_m)|^2dx,
\end{eqnarray}
where we recall that $\mathbf{n}_{cm}$ denotes the
unit normal to interface ${\Gamma_{cm}}$ pointing from the conduit to the
matrix.
By the divergence theorem and the incompressibility condition \eqref{HSCH-NSCH5}, we get
\begin{eqnarray}
\int_{\Omega_m}\ub_m\cdot\nabla P_m dx
 &= &
  \int_{\Omega_m}\left[\nabla\cdot(P_m \ub_m)-P_m(\nabla\cdot\ub_m) \right]dx\non\\
&=&
  -\int_{{\Gamma_{cm}}}P_m\ub_m\cdot\mathbf{n}_{cm}dS
\end{eqnarray}
Then \eqref{HSCH-NSCH:HSpart} becomes
\begin{eqnarray}
 \label{HSCH-NSCH:HSpart:Final}
&&
  \frac{d}{dt}\int_{\Omega_m}\left[\frac{\epsilon}{2}|\nabla\varphi_m|^2+\frac{1}{\epsilon}F(\varphi_m)\right]dx\non\\
&&
   + \int_{\Omega_m} \left( \nu(\varphi_m)\Pi^{-1}|\ub_m|^2+{\rm M}(\vp_m)|\nabla\mu(\vp_m)|^2\right)dx\non\\
&=&
  -\int_{\Gamma_{cm}}{\rm M}(\vp_m)\mu(\vp_m)\frac{\partial\mu(\vp_m)}{\partial \mathbf{n}_{cm}}dS-\epsilon \int_{\Gamma_{cm}}\partial_t\varphi_m\frac{\partial\varphi_m}{\partial\mathbf{n}_{cm}}dS\non\\
  && +\int_{\Gamma_{cm}}P_m\ub_m\cdot\mathbf{n}_{cm}dS.
\end{eqnarray}

Finally, combining \eqref{HSCH-NSCH:NSpart}, \eqref{HSCH-NSCH:NSpart:tensor} and \eqref{HSCH-NSCH:HSpart:Final}, using the definition of $\varphi$ as well as the continuity conditions \eqref{IBCi1}--\eqref{IBCi3} on interface $\Gamma_{cm}$, we can cancel the boundary terms and conclude the basic energy law \eqref{EnergyLaw}. The proof is complete.
\end{proof}


\subsection{Weak formulation and main results}\label{WM}
We make the following assumptions on viscosity $\nu$,   mobility coefficient $\mathrm{M}$ as well as the permeability matrix $\Pi$:
 \begin{itemize}
 \item[(A1)] $\nu\in C^1(\mathbb{R})$,  $\underline{\nu} \leq \nu(s)\leq\bar{\nu}  $ and $|\nu'(s)|\leq \tilde{\nu}$ for $s\in \mathbb{R}$, where $\bar{\nu}$, $\underline{\nu}$ and $\tilde{\nu}$ are positive constants.
 \item[(A2)] ${\rm M}\in C^1  (\mathbb{R})$, $\underline{m} \leq {\rm M}(s)\leq \bar m $ and $|{\rm M}'(s)|\leq \tilde{m}$ for $s\in \mathbb{R}$,  where $\bar{m}$, $\underline{m}$ and $\tilde{m}$  are positive constants.
 \item[(A3)]  The permeability $\Pi$ is isotropic, bounded from above and below (so is the hydraulic conductivity tensor $\mathbb{K}$),  namely, $\Pi=\kappa(x)\mathbb{I}$ with $\mathbb{I}$ being the $d\times d$ identity matrix and $\kappa(x)\in L^\infty (\Omega)$ such that there exist  $\bar{\kappa}>\underline{\kappa}>0$, $\underline{\kappa}\leq \kappa(x)\leq \bar{\kappa}$ a.e. in $\Omega$.
 \end{itemize}

Below we introduce the notion of finite energy weak solution to the CHSD system \eqref{HSCH-NSCH1}--\eqref{IBCi7} as well as its corresponding weak formulation.

\begin{definition}\label{defweak} Suppose that $d=2,3$ and $T>0$ is arbitrary. Let $\alpha=\frac85$ when $d=3$ and $\alpha <2 $ being arbitrary close to $2$ when $d=2$.

\textbf{Case 1}: $\varpi>0$. We consider the initial data $\ub_0(x)\in \mathbf{L}^2(\Omega_c)$, $\vp_{0}\in H^1(\Omega)$. The functions $(\ub_c, \mathbf{u}_m, P_m, \vp, \mu)$ with the following properties
\bea
&& \ub_c\in L^\infty(0,T; \mathbf{L}^2(\Omega_c))\cap L^2(0, T; \mathbf{H}_{c,\mathrm{div}})\cap W^{1,\alpha}(0,T; (\mathbf{H}^1(\Omega_c))'),\label{regubc}\\
&& \ub_m\in L^2(0, T; \mathbf{L}^2(\Omega_m)),\label{regubm}\\
&& P_m \in L^\alpha(0,T; X_m),\label{regpm}\\
&& \vp\in L^\infty(0,T; H^1(\Omega))\cap L^2(0,T; H^3(\Omega))\cap W^{1,\alpha}(0;T; (H^1(\Omega))'),\\
&& \mu\in L^2(0, T; H^1(\Omega)),
\eea
is called a finite energy weak solution of the CHSD system \eqref{HSCH-NSCH1}--\eqref{IBCi7}, if the following conditions are satisfied:

(1) For any $\mathbf{v}_c\in C^\infty_0((0,T); \mathbf{H}_{c,\mathrm{div}})$ and $q_m\in C([0,T]; X_m)$,
 \begin{eqnarray}
 &&-\varpi\int_0^T(\ub_c,\partial_t\mathbf{v}_c)_c dt +2\int_0^T(\nu(\vp_c)\mathbb{D}(\ub_c),\mathbb{D}(\mathbf{v}_c))_cdt
 \non\\
&& + \int_0^T \left(\frac{\Pi}{\nu(\varphi_m)}\Big[\nabla P_m- \mu(\varphi_m)\nabla\varphi_m\Big] ,\nabla q_m\right)_m dt
\non\\
&&+\sum_{i=1}^{d-1}\frac{\alpha_{BJSJ}}{\sqrt{{\rm trace} (\Pi)}}\int_0^T\int_{\Gamma_{cm}}\nu(\vp_m)(\ub_c\cdot\btau_i)(\mathbf{v}_c\cdot\btau_i)dSdt\non\\
&&+ \int_0^T \int_{\Gamma_{cm}} P_m (\mathbf{v}_c\cdot \mathbf{n}_{cm}) dSdt  - \int_0^T \int_{\Gamma_{cm}} (\ub_c\cdot \mathbf{n}_{cm})q_m dSdt \non\\
&=& \int_0^T (\mu(\varphi_c)\nabla \vp_c,  \mathbf{v}_c )_c dt,\label{weak1}
\end{eqnarray}
moreover, the velocity $\ub_m$ in the matrix part satisfies
 \be
\int_0^T (\ub_m, \mathbf{v}_m)_m dx =-\int_0^T\left(\frac{\Pi}{\nu(\varphi_m)}\Big[\nabla P_m-\mu(\varphi_m)\nabla\varphi_m\Big], \mathbf{v}_m\right)_m dt,\label{weak2}
 \ee
 for any $\mathbf{v}_m\in C([0, T]; \mathbf{L}^2(\Omega_m))$.

(2) For any $\phi\in C_0^\infty((0,T); H^1(\Omega))$,
\begin{eqnarray}
&&-\int_0^T(\varphi,\partial_t\phi)dt+\int_0^T({\rm M}(\vp) \nabla \mu(\vp),\nabla\phi)dt=-\int_0^T(\ub \cdot \nabla\varphi ,\phi)dt, \label{weak3}\\
&&\int_0^T(\mu(\varphi),\phi)dt=\int_0^T\left[\frac{1}{\epsilon}(f(\varphi),\phi)+\epsilon(\nabla\varphi,\nabla\phi)\right]dt. \label{weak4}
\end{eqnarray}

(3) $\ub_c|_{t=0}=\ub_0(x)$, $\vp|_{t=0}=\vp_{0}(x)$.

(4) The finite energy solution satisfies the energy inequality
\be
\mathcal{E}(t)  +\int_s^t\mathcal{D}(\tau) d\tau \leq \mathcal{E}(s), \label{Energyinq}
\ee
for all $t\in [s,T)$ and almost all $s\in [0,T)$ (including $s=0$), where the total energy $\mathcal{E}$ is given by \eqref{totenergy}.

\textbf{Case 2}: $\varpi=0$. In this case, we do not need the initial condition for $\ub_c$. The regularity property for $\ub_c$ (cf. \eqref{regubc}) is simply replaced by
 \be
 \ub_c\in L^2(0, T; \mathbf{H}_{c,\mathrm{div}}).\label{regubca}
 \ee
 The finite energy weak solution $(\ub_c, \mathbf{u}_m, P_m, \vp, \mu)$ still fulfills the above properties (1)--(4) with $\varpi=0$ in corresponding formulations.
  \end{definition}
 \begin{remark}
In the above weak formulation \eqref{weak1}--\eqref{weak2}, the reason we do not break the force term $\nabla P_m-\mu(\varphi_m)\nabla\varphi_m$ is that this term (or equivalently, the velocity in the matrix part $\ub_m$) has better regularity/integrability than its two components (see \eqref{regubm}--\eqref{regpm}).
\end{remark}
\begin{remark}
We note that the interface boundary conditions \eqref{IBC1}--\eqref{IBCi7} are enforced as a consequence of the weak formulation stated above. Note also that the pressure terms $P_c$ and $P_m$ are only uniquely determined up to an additive constant in the strong form \eqref{HSCH-NSCH1}--\eqref{IBCi7}, i.e., they satisfy the same set of equations with the same boundary conditions as well as interface conditions after being shifted by the same constant. As a consequence, it makes sense to seek $P_m$ in the space $\dot{H}^1(\Omega_m)$ (i.e., $X_m$).  The equivalence for smooth solutions between the weak formulation and the classical form can be verified in a straightforward way.
\end{remark}

Now we are in a position to state the mains results of this paper:

\bt[Existence of finite energy weak solutions]\label{thmEx}
 Suppose that $d=2,3$ and the assumptions (A1)--(A3) are satisfied.
\begin{itemize}
 \item[(i)] If $\varpi>0$, for any $\ub_0\in \mathbf{L}^2(\Omega_c)$, $\varphi_0 \in H^1(\Omega)$ and $T>0$ being arbitrary, the CHSD system \eqref{HSCH-NSCH1}--\eqref{IBCi7} admits at least one global finite energy weak solution $\{\mathbf{u}_c, \ub_m, P_m, \vp, \mu\}$ in the sense of Definition \ref{defweak}.

 \item[(ii)] If $\varpi=0$, for any $\varphi_0 \in H^1(\Omega)$, the CHSD system \eqref{HSCH-NSCH1}--\eqref{IBCi7} admits at least one global finite energy weak solution $\{\mathbf{u}_c, \ub_m, P_m, \vp, \mu\}$ in the sense of Definition \ref{defweak}.
 \end{itemize}
\et

\bt[Weak-strong uniqueness]\label{thmuni}
 Let $d=2,3$, $\varpi\geq 0$ and the assumptions (A1)--(A3) be satisfied. Suppose that $\{\mathbf{u}_c, \ub_m, P_m, \vp\}$ is a finite energy weak solution to the CHSD system \eqref{HSCH-NSCH1}--\eqref{IBCi7} in $(0,T)\times \Omega$ and $\{\tilde{\mathbf{u}}_c, \tilde{\ub}_m, \tilde{P}_m, \tilde{\vp}\}$ is a regular solution emanating from the same initial data with the following regularity conditions
 $$
 \tilde{\mathbf{u}}_c \in L^{\frac{8}{3}}(0,T;\mathbf{H}_{c, \mathrm{div}}),\quad  \tilde{\ub}_m\in L^\frac{8}{3}(0,T; \mathbf{H}_{m,\mathrm{div}}),\quad \tilde{\vp}\in L^\frac{8}{3}(0,T; H^3(\Omega)),
 $$
 then it holds
 $$ \mathbf{u}_c=\tilde{\mathbf{u}}_c,\quad \ub_m=\tilde{\ub}_m,\quad  P_m=\tilde{P}_m,\quad \vp=\tilde{\vp}.
 $$
\et


\section{Existence of Weak Solutions}\setcounter{equation}{0}
We shall apply a semi-discretization approach (finite difference in time, cf. \cite{Lions1969, Temam1977}) to prove the existence result Theorem \ref{thmEx}.
 First, a discrete in time, continuous in space numerical scheme is proposed and shown to be mass-conservative and energy law preserving. Then, the existence of weak solutions to the discretized system is proved by the Leray-Schauder degree theory. Last, an approximate solution is constructed and its convergence to the weak solution of the original CHSD system \eqref{HSCH-NSCH1}--\eqref{IBCi7} is established via a compactness argument.

\subsection{A time discretization scheme}
Here we propose a semi-implicit time discretization scheme to the weak formulation  \eqref{weak1}--\eqref{weak4}. Recall our convention
$$
\varphi |_{\Omega_c}=\varphi_c, \quad \varphi |_{\Omega_m}=\varphi_m, \quad \mu |_{\Omega_c}=\mu_c, \quad \mu|_{\Omega_m}=\mu_m.
$$
For arbitrary but fixed $T>0$ and positive integer $N\in \mathbb{N}$, we denote by $\delta=\Delta t=\frac{T}{N}$ the time step size. Given $(\ub_c^k, \vp_c^k, P_m^k, \vp_m^k)$, $k=0,1,2,...,N-1$, we want to determine $(\ub_c, \vp_c, P_m, \vp_m)=(\ub_c^{k+1}, \vp_c^{k+1}, P_m^{k+1}, \vp_m^{k+1})$ as a solution of the following nonlinear elliptic system
\begin{eqnarray}
&&\varpi \left(\frac{\ub_c^{k+1}-\ub_c^k}{\delta} ,\mathbf{v}_c\right)_c+2\left(\nu(\vp_c^k)\mathbb{D}(\ub_c^{k+1}),\mathbb{D}(\mathbf{v}_c)\right)_c\non\\
&&+\left(\frac{\Pi}{\nu(\varphi_m^k)}\left[\nabla P_m^{k+1}- \mu_m^{k+1}\nabla\varphi_m^{k+1}\right] ,\nabla q_m\right)_m
\non\\
&& +\sum_{i=1}^{d-1}\frac{\alpha_{BJSJ}}{\sqrt{{\rm trace} (\Pi)}} \int_{\Gamma_{cm}}\nu(\vp^k_m)(\ub_c^{k+1}\cdot\btau_i)(\mathbf{v}_c\cdot\btau_i)dS\non\\
&& +
\int_{\Gamma_{cm}} P_m^{k+1} (\mathbf{v}_c\cdot \mathbf{n}_{cm}) dS -\int_{\Gamma_{cm}} (\ub_c^{k+1}\cdot \mathbf{n}_{cm})q_m dS \non\\
&=&(\mu_c^{k+1}\nabla\varphi_c^{k+1},\mathbf{v}_c)_c,\label{app1}
 \end{eqnarray}
\begin{equation}
\left(\frac{\varphi^{k+1}-\vp^k}{\delta},\phi\right) + (\ub^{k+1}\cdot \nabla \vp^{k+1} ,\phi)=- \left({\rm M}(\vp^k)\nabla \mu^{k+1},\nabla\phi\right),
   \label{app2a}
\end{equation}
\begin{equation}
(\mu^{k+1},\phi)=\frac{1}{\epsilon}\left(\widetilde{f}(\varphi^{k+1},\varphi^k),\phi\right) +\epsilon(\nabla\varphi^{k+1},\nabla\phi), \label{app2b}
\end{equation}
for any $\mathbf{v}_c\in \mathbf{H}_{c,{\rm div}}$, $q_m \in X_m$ and $\phi\in H^1(\Omega)$.
In the above formulation, the vector $\mathbf{u}^{k+1}$ satisfies $\ub^{k+1}|_{\Omega_c}=\ub_c^{k+1}$ and $\ub^{k+1}|_{\Omega_m}=\ub_m^{k+1}$, where
 \be
\ub_m^{k+1}=-\frac{\Pi}{\nu(\varphi_m^k)}\left(\nabla P_m^{k+1}-\mu_m^{k+1}\nabla\varphi_m^{k+1}\right).\label{app3}
 \ee
The function $\widetilde{f}(\phi,\psi)$ in equation \eqref{app2b} is derived from a convex splitting approximation to the nonconvex function $F(\varphi)$ (see \eqref{fen}) and it takes the following form (cf. e.g., \cite{Eyre,Wise10})
 \be
 \widetilde{f}(\phi,\psi)=\phi^3-\psi.\label{ConSp}
 \ee

\begin{remark}
We note that equations \eqref{app1}--\eqref{app3} are strongly coupled, which demands suitable choices on discretization schemes in order to prove the existence of weak solutions (see \cite{Wise10, FeWi2012} and \cite{HaWa2014} for related diffuse-interface models). Here, the advective term in the Cahn-Hilliard equation (i.e., the second term $\mathbf{u}\cdot \nabla \vp$ in equation \eqref{app2a}) and accordingly the elastic forcing term $\mu \nabla \varphi$ in equations \eqref{app1}, \eqref{app3}  are discretized fully implicitly. Under this fully implicit discretization, it is possible to preserve a discrete energy law (see Lemma \ref{DEE}) in analogy to the continuous one \eqref{EnergyLaw}, moreover it enables us to obtain the existence of weak solutions under the natural assumption on initial data such that $\varphi_0 \in H^1(\Omega)$. In \cite{Wise10, FeWi2012}, a different semi-implicit treatment of the advective term and the elastic forcing term for the Cahn-Hilliard-Darcy system in a simple domain was proposed. The discretization therein still keeps a discrete energy law while one needs to assume $\varphi_0 \in H^2(\Omega)$ (or at least $H^1(\Omega) \cap L^\infty(\Omega)$) to obtain the existence of weak solutions.
 \end{remark}

In the following content of this subsection, we will temporarily omit the superscript $k+1$ for $\mathbf{u}_c^{k+1}$, $P_m^{k+1}$, $\mathbf{u}_m^{k+1}$, $\varphi^{k+1}$, $\mu^{k+1}$ for the sake of simplicity. Besides, we just provide the proof for the case $\varpi>0$, while the argument can be easily adapted to the simpler case $\varpi=0$ with minor modifications.

A few \emph{a priori} estimates can be readily derived.  First, one can deduce that the above numerical scheme keeps the mass conservation property.
\begin{lemma}\label{LMC}
Suppose that $\ub_c^k\in \mathbf{L}^2(\Omega_c)$, $\vp^k\in H^1(\Omega)$ and
$\{\ub_c, P_m, \varphi, \mu\}\in  \mathbf{H}_{c,{\rm div}} \times X_m\times  H^3(\Omega)\times  H^1(\Omega)$ solve the nonlinear system \eqref{app1}--\eqref{app3}. Then $\mathbf{u}_m$ (given by \eqref{app3}) satisfies
\begin{align}\label{Fum}
\mathbf{u}_m \in \mathbf{H}_{m, \mathrm{div}}, \quad \mathbf{u}_m \cdot \mathbf{n}_{cm}= \mathbf{u}_c \cdot \mathbf{n}_{cm}\in H^{\frac{1}{2}}(\Gamma_{cm}).
\end{align}
Moreover,  the following mass-conservation holds
\begin{align}\label{MC}
\int_{\Omega}\varphi\, dx=\int_{\Omega} \varphi^k \, dx.
\end{align}
\end{lemma}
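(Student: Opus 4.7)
The plan is to extract both conclusions from the weak formulation \eqref{app1}--\eqref{app3} by choosing suitable test functions and performing an integration by parts, using only the standing regularity of $\{\mathbf{u}_c, P_m, \varphi, \mu\}$.

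First I would establish the regularity of $\mathbf{u}_m$ directly from formula \eqref{app3}. Since $P_m\in X_m\subset H^1(\Omega_m)$ gives $\nabla P_m\in\mathbf{L}^2(\Omega_m)$, and since $\varphi\in H^3(\Omega)\hookrightarrow W^{1,\infty}(\Omega)$ in dimensions $d=2,3$ while $\mu\in H^1(\Omega)\subset L^2(\Omega)$, we obtain $\mu_m\nabla\varphi_m\in\mathbf{L}^2(\Omega_m)$. Combined with the uniform bounds from (A1), (A3), this yields $\mathbf{u}_m\in\mathbf{L}^2(\Omega_m)$. To identify $\nabla\cdot\mathbf{u}_m$ and the normal traces, I would plug $\mathbf{v}_c=\mathbf{0}$ into \eqref{app1} and replace the bracketed factor by $-\mathbf{u}_m$ via \eqref{app3}, giving
\[
-(\mathbf{u}_m,\nabla q_m)_m-\int_{\Gamma_{cm}}(\mathbf{u}_c\cdot\mathbf{n}_{cm})q_m\,dS=0\qquad\forall\,q_m\in X_m.
\]
Testing first with $q_m=\psi-\overline{\psi}$ for $\psi\in C_0^\infty(\Omega_m)$ (noting $\nabla q_m=\nabla\psi$ and that $\int_{\Gamma_{cm}}\mathbf{u}_c\cdot\mathbf{n}_{cm}\,dS=\int_{\Omega_c}\nabla\cdot\mathbf{u}_c\,dx=0$ since $\mathbf{u}_c\in\mathbf{H}_{c,\mathrm{div}}$ and $\mathbf{u}_c=0$ on $\Gamma_c$) gives $\nabla\cdot\mathbf{u}_m=0$ distributionally, hence $\mathbf{u}_m\in\mathbf{H}(\mathrm{div};\Omega_m)$. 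Applying the standard trace–divergence integration-by-parts identity on $\Omega_m$ (where the outward normal on $\Gamma_{cm}$ is $-\mathbf{n}_{cm}$) then reduces the identity to
\[
\bigl\langle\mathbf{u}_m\cdot\mathbf{n}_m,\,q_m\bigr\rangle_{\Gamma_m}=\bigl\langle(\mathbf{u}_m-\mathbf{u}_c)\cdot\mathbf{n}_{cm},\,q_m\bigr\rangle_{\Gamma_{cm}}\qquad\forall\,q_m\in X_m.
\]
By independently varying the trace of $q_m$ on the two disjoint boundary pieces $\Gamma_m$ and $\Gamma_{cm}$ (achieved by $H^{1/2}$-extensions adjusted by a constant to land in the mean-zero space $X_m$), I can conclude $\mathbf{u}_m\cdot\mathbf{n}_m=0$ on $\Gamma_m$ and $\mathbf{u}_m\cdot\mathbf{n}_{cm}=\mathbf{u}_c\cdot\mathbf{n}_{cm}$ on $\Gamma_{cm}$. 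The claimed $H^{1/2}(\Gamma_{cm})$ regularity of the common normal trace is then inherited from the conduit side: $\mathbf{u}_c\in\mathbf{H}^1(\Omega_c)$ has trace in $\mathbf{H}^{1/2}(\Gamma_{cm})$, and $\mathbf{n}_{cm}$ is Lipschitz. This finishes \eqref{Fum}.

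For the mass-conservation claim \eqref{MC}, I would simply test \eqref{app2a} with the constant $\phi\equiv 1$, which is admissible in $H^1(\Omega)$. The mobility term drops out, leaving
\[
\frac{1}{\delta}\int_\Omega(\varphi-\varphi^k)\,dx=-\int_\Omega \mathbf{u}\cdot\nabla\varphi\,dx.
\]
Splitting the right-hand side into $\Omega_c$ and $\Omega_m$ and integrating by parts in each subdomain produces only interface contributions, since $\mathbf{u}_c$ and $\mathbf{u}_m$ are both divergence-free and $\mathbf{u}_c|_{\Gamma_c}=0$, $\mathbf{u}_m\cdot\mathbf{n}_m|_{\Gamma_m}=0$ by the first part. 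What remains is
\[
\int_{\Gamma_{cm}}(\mathbf{u}_c\cdot\mathbf{n}_{cm})\varphi_c\,dS-\int_{\Gamma_{cm}}(\mathbf{u}_m\cdot\mathbf{n}_{cm})\varphi_m\,dS,
\]
which vanishes because $\varphi\in H^3(\Omega)\subset H^1(\Omega)$ has a single trace across $\Gamma_{cm}$ (so $\varphi_c=\varphi_m$ there) and the normal velocities match by \eqref{Fum}.

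The main obstacle I anticipate is the careful bookkeeping in the second step of part (i): one must keep track of the sign convention for $\mathbf{n}_{cm}$ (which is the outward normal for $\Omega_c$ but the inward normal for $\Omega_m$), and one must argue that the traces on $\Gamma_m$ and $\Gamma_{cm}$ can be varied independently despite the mean-zero constraint built into $X_m=\dot H^1(\Omega_m)$. Since $\Gamma_m$ and $\Gamma_{cm}$ are disjoint with positive measure, this independence follows from a standard $H^{1/2}(\partial\Omega_m)$ lifting argument combined with subtraction of the mean, so that $\mathbf{u}_m\cdot\mathbf{n}_m\in H^{-1/2}(\Gamma_m)$ and $(\mathbf{u}_m-\mathbf{u}_c)\cdot\mathbf{n}_{cm}\in(H^{1/2}_{00}(\Gamma_{cm}))'$ can each be shown to vanish separately. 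Everything else is routine.
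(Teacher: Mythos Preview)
Your proposal is correct and follows essentially the same route as the paper: set $\mathbf{v}_c=\mathbf{0}$ in \eqref{app1}, use \eqref{app3} to rewrite the matrix term as $(\mathbf{u}_m,\nabla q_m)_m$, read off $\nabla\cdot\mathbf{u}_m=0$ and the boundary/interface conditions via the divergence theorem, upgrade the normal-trace equality to $H^{1/2}(\Gamma_{cm})$ from the conduit side, and then test \eqref{app2a} with $\phi\equiv 1$. Your write-up is in fact a bit more careful than the paper's in two places---you spell out how the mean-zero constraint in $X_m=\dot H^1(\Omega_m)$ is handled (via $\int_{\Gamma_{cm}}\mathbf{u}_c\cdot\mathbf{n}_{cm}\,dS=0$ and independent variation on $\Gamma_m$ versus $\Gamma_{cm}$), and you make explicit the subdomain integration by parts behind the mass-conservation step---but the underlying argument is identical.
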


\begin{proof}
It is clear from equation \eqref{app3} and the Sobolev embedding theorem ($d\leq 3$) that $\mathbf{u}_m \in \mathbf{L}^2(\Omega_m)$.
Taking the test function $\mathbf{v}_c =0$ in equation \eqref{app1} and utilizing equation \eqref{app3}, one obtains
\be
\big({\mathbf{u}}_m, \nabla q_m \big)_m  -\int_{\Gamma_{cm}} ({\mathbf{u}}_c \cdot \mathbf{n}_{cm}) q_m dS =0, \quad  \forall \, q_m \in X_m,\label{wofum}
\ee
which easily yields that $\nabla \cdot {\mathbf{u}}_m=0$ in the sense of distribution and then $\mathbf{u}_m \in \mathbf{H}(\mathrm{div}; \Omega_m)$. Thus, the normal component ${\mathbf{u}}_m \cdot \mathbf{n}$ is well-defined in $H^{-\frac{1}{2}}(\partial \Omega_m)$ ($\mathbf{n}$ denotes the unit outer  normal on $\partial \Omega_m$ and it corresponds to $\mathbf{n}_m$ on $\Gamma_m$ and to $-\mathbf{n}_{cm}$ on $\Gamma_{cm}$, respectively). Applying Green's formula to the first term in equation \eqref{wofum} gives that
$$ {\mathbf{u}}_m \cdot \mathbf{n}_m =0 \text{ in }  H^{-\frac{1}{2}}(\Gamma_m) \quad \text{and}\quad {\mathbf{u}}_m \cdot \mathbf{n}_{cm} = {\mathbf{u}}_c \cdot \mathbf{n}_{cm} \text{ in } \big(H^{\frac{1}{2}}_{00}(\Gamma_{cm})\big)^\prime.
$$
Therefore, $\mathbf{u}_m \in \mathbf{H}_{m, \mathrm{div}}$. It follows from the trace theorem that ${\mathbf{u}}_c \cdot \mathbf{n}_{cm} \in H^{\frac{1}{2}}(\Gamma_{cm})$, then one further gets
${\mathbf{u}}_m \cdot \mathbf{n}_{cm} = {\mathbf{u}}_c \cdot \mathbf{n}_{cm}$  in $H^{\frac{1}{2}}(\Gamma_{cm})$.

The mass-conservation \eqref{MC} now follows from taking the test function $\phi=1$ in equation \eqref{app2a} and performing integration by parts.
\end{proof}
The next lemma shows that the numerical scheme \eqref{app1}--\eqref{ConSp} satisfies a discrete analogue of the basic energy law \eqref{BEL}.
\begin{lemma}\label{DEE}
Suppose that $\ub_c^k\in \mathbf{L}^2(\Omega_c)$, $\vp^k\in H^1(\Omega)$ and
$\{\ub_c, P_m, \varphi, \mu\}\in  \mathbf{H}_{c,{\rm div}} \times X_m\times  H^3(\Omega)\times  H^1(\Omega)$ solve the system \eqref{app1}--\eqref{app3}. Then the following discrete energy inequality holds
\begin{eqnarray}\label{DisEnLaw}
&&
  \mathcal{E}(\ub_c,\varphi) +\delta \left( \nu(\varphi_m^k)\Pi^{-1}\ub_m, \ub_m\right)_m
  +2\delta \left(\nu(\vp_c^k)\mathbb{D}(\ub_c), \mathbb{D}(\ub_c)\right)_c
\non\\
&& +\delta\int_\Omega \mathrm{M}(\vp^k)|\nabla\mu|^2 dx+\frac{\delta \alpha_{BJSJ}}{\sqrt{{\rm trace}(\Pi)}}\sum_{i=1}^{d-1}\int_{\Gamma_{cm}} \nu(\vp^k_m)|\ub_c\cdot\btau_i|^2 dS
  \non\\
  &&+\frac{\varpi}{2}\left(\ub_c-\ub_c^k ,\ub_c-\ub_c^k\right)_c +\frac{\epsilon}{2}\|\nabla (\varphi-\varphi^k)\|_{L^2(\Omega)}^2+ \frac{1}{2\epsilon}\|\vp-\vp^k\|_{L^2(\Omega)}^2\non\\
&\le& \mathcal{E}(\ub_c^k,\varphi^k),
\end{eqnarray}
where the energy functional $\mathcal{E}$ is defined in \eqref{totenergy}.
\end{lemma}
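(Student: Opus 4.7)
The argument is a discrete analogue of Lemma \ref{BEL}. I would test the three discrete equations with the same kinds of quantities that worked in the continuous derivation---namely $\mathbf{v}_c=\ub_c^{k+1}$ and $q_m=P_m^{k+1}$ in \eqref{app1}, $\phi=\mu^{k+1}$ in \eqref{app2a}, and $\phi=(\vp^{k+1}-\vp^k)/\delta$ in \eqref{app2b}---and then sum the results. The boundary and interface manipulations are then identical to the continuous case and produce, on the left-hand side of \eqref{DisEnLaw}, the four bulk dissipation terms together with the BJSJ surface integral. The genuine differences with the continuous proof appear only when one tries to recover the total energy increment $\mathcal{E}(\ub_c,\vp)-\mathcal{E}(\ub_c^k,\vp^k)$: the time derivatives must be replaced by finite differences, and the non-convex potential $F$ must be handled through the convex splitting \eqref{ConSp}.

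\textbf{Key cancellations.} Testing \eqref{app1} with the pair $(\ub_c^{k+1},P_m^{k+1})$ makes the two interface pressure terms cancel. Invoking the discrete Darcy relation \eqref{app3}, I rewrite
\[
\Big(\tfrac{\Pi}{\nu(\vp_m^k)}[\nabla P_m^{k+1}-\mu_m^{k+1}\nabla\vp_m^{k+1}],\nabla P_m^{k+1}\Big)_m = -(\ub_m^{k+1},\nabla P_m^{k+1})_m,
\]
and then substitute $\nabla P_m^{k+1}=\mu_m^{k+1}\nabla\vp_m^{k+1}-\nu(\vp_m^k)\Pi^{-1}\ub_m^{k+1}$ to obtain the matrix dissipation $(\nu(\vp_m^k)\Pi^{-1}\ub_m^{k+1},\ub_m^{k+1})_m$ minus the matrix elastic forcing $(\mu_m^{k+1}\nabla\vp_m^{k+1},\ub_m^{k+1})_m$. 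Combining the conduit and matrix contributions yields
\[
\varpi\Big(\tfrac{\ub_c-\ub_c^k}{\delta},\ub_c\Big)_c + 2(\nu(\vp_c^k)\mathbb{D}(\ub_c),\mathbb{D}(\ub_c))_c + (\nu(\vp_m^k)\Pi^{-1}\ub_m,\ub_m)_m + \mathcal{B} = (\mu\nabla\vp,\ub)_\Omega,
\]
where $\mathcal{B}$ denotes the BJSJ surface term and I have dropped the superscripts $k{+}1$. Next, testing \eqref{app2a} with $\mu^{k+1}$ and \eqref{app2b} with $(\vp^{k+1}-\vp^k)/\delta$, eliminating the common term $\tfrac{1}{\delta}(\mu,\vp^{k+1}-\vp^k)$, and multiplying by $\delta$ produces
\[
\tfrac{1}{\epsilon}(\widetilde f(\vp,\vp^k),\vp-\vp^k) + \epsilon(\nabla\vp,\nabla(\vp-\vp^k)) + \delta(\ub\cdot\nabla\vp,\mu) + \delta\int_\Omega \mathrm{M}(\vp^k)|\nabla\mu|^2\,dx = 0.
\]
Because $(\ub\cdot\nabla\vp,\mu)=(\mu\nabla\vp,\ub)$ is literally the same $L^2(\Omega)$ pairing that appeared on the right of the momentum balance, adding the two identities eliminates this advective/elastic coupling.

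\textbf{Recovering the energy and the main obstacle.} I then apply the elementary identity $2(a-b,a)=\|a\|^2-\|b\|^2+\|a-b\|^2$ to both $\varpi(\ub_c-\ub_c^k,\ub_c)_c$ and $\epsilon(\nabla\vp,\nabla(\vp-\vp^k))$ to generate the kinetic and gradient portions of $\mathcal{E}(\ub_c,\vp)-\mathcal{E}(\ub_c^k,\vp^k)$, together with the nonnegative remainders $\tfrac{\varpi}{2}\|\ub_c-\ub_c^k\|_{L^2(\Omega_c)}^2$ and $\tfrac{\epsilon}{2}\|\nabla(\vp-\vp^k)\|_{L^2(\Omega)}^2$. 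The only technically delicate step is the treatment of the nonlinear potential. Writing $F=F_c-F_e$ with convex $F_c(s)=\tfrac14 s^4$ and quadratic $F_e(s)=\tfrac12 s^2-\tfrac14$ (so that $\widetilde f(\vp,\vp^k)=F_c'(\vp)-F_e'(\vp^k)$), convexity of $F_c$ yields $F_c'(\vp)(\vp-\vp^k)\ge F_c(\vp)-F_c(\vp^k)$, while the exact Taylor identity $F_e(\vp)=F_e(\vp^k)+F_e'(\vp^k)(\vp-\vp^k)+\tfrac12(\vp-\vp^k)^2$ gives $-F_e'(\vp^k)(\vp-\vp^k)=F_e(\vp^k)-F_e(\vp)+\tfrac12(\vp-\vp^k)^2$. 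Summing and integrating produces
\[
\tfrac{1}{\epsilon}(\widetilde f(\vp,\vp^k),\vp-\vp^k)\ \ge\ \tfrac{1}{\epsilon}\int_\Omega\bigl[F(\vp)-F(\vp^k)\bigr]\,dx + \tfrac{1}{2\epsilon}\|\vp-\vp^k\|_{L^2(\Omega)}^2,
\]
which is precisely the convex-splitting inequality needed to get the potential-energy increment with the extra nonnegative remainder. Plugging this bound and the two discrete identities back into the combined equality above delivers \eqref{DisEnLaw} exactly. The only genuinely nontrivial ingredient is this convex-splitting estimate with its quadratic correction, and it reduces to a pointwise one-variable calculation that relies on $F_c''\ge 0$ and the exactness of Taylor's formula for the quadratic $F_e$; all remaining manipulations are direct discrete analogues of the integration-by-parts identities used in the proof of Lemma \ref{BEL}.
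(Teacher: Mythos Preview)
Your proof is correct and follows essentially the same route as the paper: test \eqref{app1} with $(\ub_c,P_m)$, test \eqref{app2a} with $\mu$, test \eqref{app2b} with $\vp-\vp^k$ (your division by $\delta$ and subsequent multiplication is immaterial), cancel the elastic/advective coupling, and apply the polarization identity to the difference quotients. The only cosmetic difference is that you derive the key pointwise bound $\widetilde f(\vp,\vp^k)(\vp-\vp^k)\ge F(\vp)-F(\vp^k)+\tfrac12(\vp-\vp^k)^2$ via the convex-splitting framework (convexity of $F_c$ plus exact Taylor expansion of the quadratic $F_e$), whereas the paper obtains the same inequality by a direct algebraic expansion that in fact gives the sharper identity with two additional nonnegative terms; both arguments yield exactly the estimate needed for \eqref{DisEnLaw}.
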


\begin{proof}
 Taking $\mathbf{v}_c=\mathbf{u}_c$, $q_m=P_m$ in \eqref{app1}, using \eqref{app3} and the elementary identity
\begin{align}
a\cdot(a-b)=\frac{1}{2}\left(|a|^2-|b|^2+|a-b|^2\right), \quad \forall\, a, b\in \mathbb{R} \ \ \text{or}\ \ \mathbb{R}^d.\label{eek1}
\end{align}
  we have
\begin{eqnarray}
&&\frac{\varpi}{2\delta}\left(\ub_c,\mathbf{u}_c\right)_c+ \frac{\varpi}{2\delta}\left(\ub_c-\ub_c^k ,\ub_c-\ub_c^k\right)_c +2\left(\nu(\vp_c^k)\mathbb{D}(\ub_c),\mathbb{D}(\mathbf{u}_c)\right)_c\non\\
&&+\left( \nu(\varphi_m^k)\Pi^{-1}\ub_m, \ub_m\right)_m +\sum_{i=1}^{d-1}\frac{\alpha_{BJSJ}}{\sqrt{{\rm trace} (\Pi)}} \int_{\Gamma_{cm}}\nu(\vp^k_m)|\ub_c\cdot\btau_i|^2dS\non\\
&=&\frac{\varpi}{2\delta}\left(\ub_c^k,\mathbf{u}_c^k\right)_c+  (\mu\nabla\varphi,\mathbf{u}).\label{eapp1}
 \end{eqnarray}
By a direct calculation, we infer from the definition of the convex splitting function $\widetilde{f}$ that
\bea
\widetilde{f}(\phi,\psi)(\phi-\psi)
&=&
F(\phi)-F(\psi)+\frac{1}{4}(\phi^2-\psi^2)^2+\frac{1}{2}(\phi-\psi)^2+\frac{1}{2}\phi^2(\phi-\psi)^2\non
\\
&\geq & F(\phi)-F(\psi)+\frac{1}{2}(\phi-\psi)^2.\label{eek2}
\eea
Then taking the test functions $\phi=\mu$ in \eqref{app2a} and $\phi=\varphi-\varphi^k$ in \eqref{app2b},  after integration by parts, we infer from \eqref{eek1} and \eqref{eek2} that
\begin{eqnarray}
\left(\frac{\varphi-\vp^k}{\delta},\mu \right) + (\ub\cdot \nabla \vp ,\mu)+ \int_\Omega \mathrm{M}(\vp^k)|\nabla\mu|^2 dx=0,
\end{eqnarray}
where
\begin{eqnarray}
\left(\varphi-\vp^k,\mu \right)
&=&\frac{1}{\epsilon}\left(\widetilde{f}(\varphi,\varphi^k),\varphi-\vp^k\right) +\epsilon(\nabla\varphi,\nabla(\varphi-\vp^k))\non\\
&\geq & \frac{\epsilon}{2}\|\nabla \varphi\|_{L^2(\Omega)}^2+\frac{\epsilon}{2}\|\nabla (\varphi-\varphi^k)\|_{L^2(\Omega)}^2-\frac{\epsilon}{2}\|\nabla \varphi^k\|_{L^2(\Omega)}^2\non\\
   &&     +\frac{1}{\epsilon}\left(F(\vp)-F(\vp^k), 1\right) + \frac{1}{2\epsilon}\|\vp-\vp^k\|_{L^2(\Omega)}^2.\label{eapp2b}
\end{eqnarray}
Combining the above estimates \eqref{eapp1}--\eqref{eapp2b} together, we easily conclude the discrete energy inequality  \eqref{DisEnLaw}.
\end{proof}

\subsection{Existence of weak solutions to the discrete problem}

In order to prove the existence of solutions to the discrete problem \eqref{app1}--\eqref{app3}, we shall adapt an argument involving the Leray-Schauder degree theory (cf. e.g., \cite{De}) that has been used in \cite{Ab09} to show the existence of weak solutions to a diffuse-interface model in simple domain with general densities. The idea is to rewrite the system \eqref{app1}--\eqref{app2b} in terms of suitable "good" operator denoted by  $\mathcal{T}_k$ and "bad" operator denoted by $\mathcal{G}_k$ such that
\begin{align}
\mathcal{T}_k(\mathbf{w})=\mathcal{G}_k(\mathbf{w}), \label{Abs}
\end{align}
where $\mathbf{w}:=\{\ub_c, P_m, \varphi, \mu\}$ is the solution. More precisely, in the abstract equation \eqref{Abs} the operators $\mathcal{T}_k: \mathbf{X} \rightarrow \mathbf{Y}$ and $\mathcal{G}_k: \mathbf{X} \rightarrow \mathbf{Y} $ (see \eqref{Tk}--\eqref{Gk} for their detailed definition and the associated spaces $\mathbf{X}$ and $\mathbf{Y}$ will be specified in \eqref{XY}) basically correspond to, respectively, the left-hand side and right-hand side of the following reformulation of the system \eqref{app1}--\eqref{app2b} (dropping the superscript $k+1$ for simplicity as mentioned before)
\begin{eqnarray}
&&\left(\ub_c,\mathbf{v}_c\right)_c+2\left(\nu(\vp_c^k)\mathbb{D}(\ub_c),\mathbb{D}(\mathbf{v}_c)\right)_c+\left(\frac{\Pi}{\nu(\varphi_m^k)}\nabla P_m,\nabla q_m\right)_m\non\\
&& +\sum_{i=1}^{d-1}\frac{\alpha_{BJSJ}}{\sqrt{{\rm trace} (\Pi)}} \int_{\Gamma_{cm}}\nu(\vp^k_m)(\ub_c\cdot\btau_i)(\mathbf{v}_c\cdot\btau_i)dS\non\\
&& +
\int_{\Gamma_{cm}} P_m (\mathbf{v}_c\cdot \mathbf{n}_{cm}) dS -\int_{\Gamma_{cm}} (\ub_c\cdot \mathbf{n}_{cm})q_m dS \label{wapp1}\\
&=&(\mu_c\nabla\varphi_c,\mathbf{v}_c)_c+\left(\ub_c,\mathbf{v}_c\right)_c+\left(\frac{\Pi}{\nu(\varphi_m^k)}\mu_m\nabla\varphi_m,\nabla q_m\right)_m\nonumber\\
&& -\left(\frac{\varpi}{\delta}(\ub_c-\ub_c^k),\mathbf{v}_c\right)_c, \non
 \end{eqnarray}
\begin{equation}
- \left({\rm M}(\vp^k)\nabla \mu,\nabla\phi\right)=\left(\frac{\varphi-\vp^k}{\delta},\phi\right) + (\ub\cdot \nabla \vp,\phi),   \label{wapp2a}
\end{equation}
\begin{equation}
\frac{1}{\epsilon}\left(\varphi^3,\phi\right) +\epsilon(\nabla\varphi,\nabla\phi)=\left(\mu+\frac{1}{\epsilon}\varphi^k,\phi\right). \label{wapp2b}
\end{equation}
As will be shown below, the operator $\mathcal{T}_k : \mathbf{X} \rightarrow \mathbf{Y}$ is continuous and invertible with $\mathcal{T}_k^{-1}(\mathbf{0})=\mathbf{0}$, while the operator $\mathcal{G}_k : \mathbf{X} \rightarrow \mathbf{Y}$ is compact. Thus the abstract equation \eqref{Abs} can be recasted into $$(\mathcal{I}-\mathcal{T}_k^{-1}\mathcal{G}_k)(\mathbf{w})=\mathbf{0}, $$
where $\mathcal{I}: \mathbf{X} \rightarrow \mathbf{X}$ is the identity operator.
  Then the existence of solutions can be shown by Leray-Schauder degree theory.

\begin{remark}
Note that equation \eqref{wapp1} is derived from an addition of a term $\left(\ub_c,\mathbf{v}_c\right)_c$ on both sides of equation \eqref{app1}. This modification is necessary in proving the invertibility of the operator associated with the left-hand side of equation \eqref{wapp1}, especially under the circumstance $|\Gamma_c|=0$ where only the  version \eqref{korn}  of Korn's inequality can be applied.
\end{remark}

We shall divide the proof for the existence of weak solutions to the approximate problem \eqref{app1}--\eqref{app3} into three steps.\medskip

\textbf{Step 1. Invertibility of operators associated with the left-hand sides of the reformulated system \eqref{wapp1}--\eqref{wapp2b}.}

First, we deal with the operator associated with the left-hand side of equation \eqref{wapp1}.
Define the product space
\begin{equation}\label{div-free space}
 \mathbf{V}:=\mathbf{H}_{c,\text{div}}\times X_m.
\end{equation}
Then we  introduce the operator $\mathcal{L}_k: \mathbf{V}\to \mathbf{V}'$ that can be associated with the following bilinear form $a(\cdot, \cdot):\mathbf{V}\times\mathbf{V}\to \mathbb{R}$:
\bea
&&\langle \mathcal{L}_k(\ub_c, P_m), (\mathbf{v}_c, q_m)\rangle_{\mathbf{V}', \mathbf{V}}\non\\
&=& a((\ub_c, P_m), (\mathbf{v}_c, q_m))\non\\
&=&2\left(\nu(\vp_c^k)\mathbb{D}(\ub_c),\mathbb{D}(\mathbf{v}_c)\right)_c+(\ub_c, \mathbf{v}_c)_c+\left(\frac{\Pi}{\nu(\varphi_m^k)}\nabla P_m,\nabla q_m\right)_m
\non\\
&&   +\sum_{i=1}^{d-1}\frac{\alpha_{BJSJ}}{\sqrt{{\rm trace} (\Pi)}} \int_{\Gamma_{cm}}\nu(\vp^k_m)(\ub_c\cdot\btau_i)(\mathbf{v}_c\cdot\btau_i)dS\non\\
&&+\int_{\Gamma_{cm}} P_m (\mathbf{v}_c\cdot \mathbf{n}_{cm}) dS -\int_{\Gamma_{cm}} (\ub_c\cdot \mathbf{n}_{cm})q_m dS,\label{Lk}
\eea
for any $(\ub_c, P_m)$, $(\mathbf{v}_c, q_m) \in \mathbf{V}$.

Recall the following Korn's inequality (cf. e.g., \cite{Hor95}),
\begin{align}
\|\mathbf{v}_c\|_{\mathbf{H}^1(\Omega_c)} \leq C \big(\|\mathbf{v}_c\|_{\mathbf{L}^2(\Omega_c)}+\|\mathbb{D}(\mathbf{v}_c)\|_{\mathbf{L}^2(\Omega_c)}\big), \quad \forall\, \mathbf{v}_c \in \mathbf{H}_{c,\mathrm{div}}, \label{korn}
\end{align}
where the constant $C$ depends only on $\Omega_c$. Moreover, if the boundary $\Gamma_c$ has non-zero measure, the Korn's inequality can be simplified as (cf. e.g., \cite{GiRa1986})
\begin{align}\label{kornsimp}
\|\mathbf{v}_c\|_{\mathbf{H}^1(\Omega_c)} \leq C \|\mathbb{D}(\mathbf{v}_c)\|_{\mathbf{L}^2(\Omega_c)}, \quad \forall\, \mathbf{v}_c \in \mathbf{H}_{c,\mathrm{div}}.
\end{align}
As a consequence, using the assumptions (A1), (A3) and the Poincar\'e inequality, we deduce that the above bilinear form $a(\cdot, \cdot)$ is coercive on $\mathbf{V}$, namely, for any $(\ub_c, P_m) \in \mathbf{V}$,
 \bea
 && a((\ub_c, P_m), (\ub_c, P_m))\non\\
 &=&2\left(\nu(\vp_c^k)\mathbb{D}(\ub_c),\mathbb{D}(\mathbf{u}_c)\right)_c+(\ub_c, \ub_c)_c+\left(\frac{\Pi}{\nu(\varphi_m^k)}\nabla P_m,\nabla P_m\right)_m
\non\\
&& +\sum_{i=1}^{d-1}\frac{\alpha_{BJSJ}}{\sqrt{{\rm trace} (\Pi)}} \int_{\Gamma_{cm}}\nu(\vp^k_m)|\ub_c\cdot\btau_i|^2 dS\non\\
&\geq& C_1\|\ub_c\|_{\mathbf{H}^1(\Omega_c)}^2+C_2\|P_m\|_{H^1(\Omega_m)}^2,\non
 \eea
 for some constants $C_1, C_2$ independent of $\ub_c, P_m$ and $\vp^k$.

Then by the Lax-Milgram lemma, we can easily deduce that
\begin{lemma}\label{LLk}
Assume that the assumptions (A1) and (A3) are satisfied. Then for any given $\vp^k\in H^1(\Omega)$, the operator $\mathcal{L}_k: \mathbf{V}\to \mathbf{V}'$ is invertible and its inverse $\mathcal{L}_k^{-1}: \mathbf{V}' \to \mathbf{V}$ is continuous.
\end{lemma}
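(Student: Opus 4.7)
The approach is the one already flagged by the authors: verify that the bilinear form $a(\cdot,\cdot)$ defined in \eqref{Lk} is bounded and coercive on $\mathbf{V}=\mathbf{H}_{c,\mathrm{div}}\times X_m$, and then invoke the Lax--Milgram lemma. Since $\mathcal{L}_k$ is precisely the Riesz-type operator associated with $a$, Lax--Milgram delivers invertibility of $\mathcal{L}_k:\mathbf{V}\to\mathbf{V}'$ together with the bound $\|\mathcal{L}_k^{-1}\|_{\mathcal{L}(\mathbf{V}',\mathbf{V})}\leq 1/C$, where $C$ is the coercivity constant, which is exactly the continuous dependence asserted.

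The boundedness of $a$ on $\mathbf{V}\times\mathbf{V}$ is routine: I would estimate the five contributions in \eqref{Lk} one by one, using assumption (A1) to control $\nu(\varphi_c^k)$ and $\nu(\varphi_m^k)$ in $L^\infty$, assumption (A3) to bound $\Pi/\nu(\varphi_m^k)$ in $L^\infty$, the Cauchy--Schwarz inequality in $L^2(\Omega_c)$ and $L^2(\Omega_m)$ for the volume integrals, and the trace theorem $H^1(\Omega_c)\hookrightarrow L^2(\Gamma_{cm})$, $H^1(\Omega_m)\hookrightarrow L^2(\Gamma_{cm})$ for the three interfacial terms. Crucially the bound does not depend on $\varphi^k$ beyond the pointwise $L^\infty$-bounds guaranteed by (A1), (A3), so the resulting constant is uniform in $k$.

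For coercivity I would test $a$ against $(\mathbf{v}_c,q_m)=(\ub_c,P_m)$. The antisymmetric coupling $\int_{\Gamma_{cm}} P_m(\ub_c\cdot\mathbf{n}_{cm})\,dS-\int_{\Gamma_{cm}}(\ub_c\cdot\mathbf{n}_{cm})P_m\,dS$ vanishes identically, which is the whole reason the saddle-point structure can be handled inside Lax--Milgram. The Beavers--Joseph--Saffman--Jones term is nonnegative by $\nu(\varphi_m^k)\geq\underline{\nu}>0$ and can be discarded. What remains is bounded below by
\[
2\underline{\nu}\,\|\mathbb{D}(\ub_c)\|_{\mathbf{L}^2(\Omega_c)}^2+\|\ub_c\|_{\mathbf{L}^2(\Omega_c)}^2+\frac{\underline{\kappa}}{\bar{\nu}}\,\|\nabla P_m\|_{\mathbf{L}^2(\Omega_m)}^2.
\]
The full Korn inequality \eqref{korn} (which must be used in the case $|\Gamma_c|=0$, and this is exactly why the artificial term $(\ub_c,\mathbf{v}_c)_c$ was added to the left-hand side of \eqref{wapp1}) gives $\|\ub_c\|_{\mathbf{H}^1(\Omega_c)}^2\lesssim\|\ub_c\|_{\mathbf{L}^2(\Omega_c)}^2+\|\mathbb{D}(\ub_c)\|_{\mathbf{L}^2(\Omega_c)}^2$. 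For the matrix component, since $X_m=\dot H^1(\Omega_m)$, the Poincar\'e inequality for zero-mean functions yields $\|P_m\|_{H^1(\Omega_m)}\lesssim\|\nabla P_m\|_{\mathbf{L}^2(\Omega_m)}$. Combining these produces $a((\ub_c,P_m),(\ub_c,P_m))\geq C_1\|\ub_c\|_{\mathbf{H}^1(\Omega_c)}^2+C_2\|P_m\|_{H^1(\Omega_m)}^2$, i.e.\ coercivity on $\mathbf{V}$.

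There is no real obstacle here; the only subtlety worth emphasizing is why coercivity survives the saddle-point coupling between the Stokes and Darcy blocks, and this is the antisymmetry of the interfacial pressure/velocity terms on $\Gamma_{cm}$. Once boundedness and coercivity are in hand, the Lax--Milgram lemma applied to $a$ on the Hilbert space $\mathbf{V}$ yields a unique solution $(\ub_c,P_m)\in\mathbf{V}$ of $\mathcal{L}_k(\ub_c,P_m)=\mathbf{F}$ for every $\mathbf{F}\in\mathbf{V}'$, with the quantitative estimate $\|(\ub_c,P_m)\|_{\mathbf{V}}\leq C^{-1}\|\mathbf{F}\|_{\mathbf{V}'}$, which is precisely the continuity of $\mathcal{L}_k^{-1}$ claimed in the lemma.
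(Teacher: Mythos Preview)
Your proposal is correct and matches the paper's own argument essentially step for step: the authors establish coercivity of $a$ on $\mathbf{V}$ via the same cancellation of the antisymmetric interface coupling, the nonnegativity of the BJSJ term, Korn's inequality \eqref{korn} together with the added $(\ub_c,\mathbf{v}_c)_c$ term, and the Poincar\'e inequality on $X_m$, and then invoke Lax--Milgram. The only difference is that you spell out boundedness explicitly while the paper leaves it implicit, but this is routine and your treatment is entirely consistent with theirs.
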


Next, we state the invertibility of the operator induced by the left-hand side of equation \eqref{wapp2a}. To this end, we recall the following simple facts in \cite{Ab09}. Define the operator
$\mathrm{div}_N: \mathbf{L}^2(\Omega)\to \dot{H}^{-1}(\Omega)$ by
$$ \langle \mathrm{div}_N \mathbf{v}, \phi\rangle_{\dot{H}^{-1}(\Omega), \dot{H}^1(\Omega)}=-(\mathbf{v}, \nabla \phi), \quad \forall\, \phi \in \dot{H}^1(\Omega).$$
The operator $\mathrm{div}_N$ acts on vector fields, which
do not  necessarily vanish on the boundary, and involves boundary conditions in a weak
sense. Let $\mathrm{M}\in L^\infty(\Omega)$ such that $\mathrm{M}(x)\geq m_0>0$ almost every in $\Omega$.  We then introduce the operator
$\mathrm{div}_N(\mathrm{M}(x) \nabla \cdot): \dot{H}^1(\Omega)\to \dot{H}^{-1}(\Omega)$ defined as
$$ \langle \mathrm{div}_N( \mathrm{M}(x) \nabla \vp), \phi\rangle_{\dot{H}^{-1}(\Omega), \dot{H}^1(\Omega)}=-(\mathrm{M}(x) \nabla \vp, \nabla \phi), \quad \forall\, \phi \in \dot{H}^1(\Omega).$$
Then the operator $\mathrm{div}_N(\mathrm{M}(x) \nabla \cdot)$ is an isomorphism due to an easy application of the Lax-Milgram lemma.

Hence, under the assumption (A2), it is easy to see that
 \begin{lemma}\label{LDk}
 Assume that the function $\mathrm{M}$ satisfies (A2).  For any given $\vp^k\in H^1(\Omega)$, the operator
 \be
 \mathcal{D}_k:=\mathrm{div}_N(\mathrm{M}(\vp^k) \nabla \cdot): \dot{H}^1(\Omega)\to \dot{H}^{-1}(\Omega)\label{Dk}
 \ee
  is invertible and its inverse $\mathcal{D}_k^{-1}: \dot{H}^{-1}(\Omega) \to \dot{H}^1(\Omega)$ is continuous.
\end{lemma}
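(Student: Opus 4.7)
The plan is to apply the Lax--Milgram lemma to the bilinear form naturally associated with $\mathcal{D}_k$, following the template indicated by the authors for the unweighted operator $\mathrm{div}_N$. Concretely, I introduce
\begin{equation*}
b(\psi,\phi) := \bigl( \mathrm{M}(\vp^k)\nabla \psi,\, \nabla\phi \bigr), \qquad \psi,\phi \in \dot{H}^1(\Omega),
\end{equation*}
so that, by the very definition of $\mathcal{D}_k$, one has $\langle -\mathcal{D}_k \psi, \phi\rangle_{\dot{H}^{-1},\dot{H}^{1}} = b(\psi,\phi)$. Since $\vp^k \in H^1(\Omega)$ is fixed, $\mathrm{M}(\vp^k)$ is simply a measurable function on $\Omega$, and assumption (A2) gives the pointwise bounds $0<\underline{m} \le \mathrm{M}(\vp^k(x)) \le \bar m$ a.e.~in $\Omega$; hence $\mathrm{M}(\vp^k) \in L^\infty(\Omega)$.

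Continuity of $b$ on $\dot{H}^1(\Omega) \times \dot{H}^1(\Omega)$ is then immediate from Cauchy--Schwarz:
\begin{equation*}
|b(\psi,\phi)| \le \bar m \,\|\nabla \psi\|_{L^2(\Omega)} \|\nabla\phi\|_{L^2(\Omega)} = \bar m\, \|\psi\|_{\dot H^1}\|\phi\|_{\dot H^1}.
\end{equation*}
For coercivity, the lower bound gives $b(\phi,\phi) \ge \underline m\, \|\nabla \phi\|_{L^2(\Omega)}^2$, and since elements of $\dot H^1(\Omega)$ have zero mean, the Poincar\'e inequality shows that $\|\nabla \phi\|_{L^2(\Omega)}$ is an equivalent norm on $\dot H^1(\Omega)$, whence $b(\phi,\phi) \ge \underline m\, \|\phi\|_{\dot H^1}^2$.

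With continuity and coercivity established, the Lax--Milgram lemma yields, for every $\ell \in \dot H^{-1}(\Omega)$, a unique $\psi \in \dot H^1(\Omega)$ such that $b(\psi,\phi) = -\langle \ell,\phi\rangle$ for all $\phi \in \dot H^1(\Omega)$, i.e.\ $\mathcal{D}_k \psi = \ell$. The standard Lax--Milgram estimate $\|\psi\|_{\dot H^1} \le \underline m^{-1}\|\ell\|_{\dot H^{-1}}$ then gives continuity of $\mathcal{D}_k^{-1}$ with operator norm at most $\underline m^{-1}$. The one small thing to be slightly careful about is checking that the zero-mean constraint is preserved and that the duality pairing used in the definition of $\mathrm{div}_N(\mathrm{M}(\vp^k)\nabla \cdot)$ matches the one for which Lax--Milgram is invoked; both are standard, so no real obstacle arises, and in that sense the lemma is an exact parallel of the unweighted case already recalled in the text.
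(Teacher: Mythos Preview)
Your proof is correct and follows precisely the approach indicated in the paper: the authors state just before the lemma that ``the operator $\mathrm{div}_N(\mathrm{M}(x)\nabla\cdot)$ is an isomorphism due to an easy application of the Lax-Milgram lemma,'' and you have simply spelled out the (standard) details of that application using the bounds from (A2) and the Poincar\'e inequality on $\dot H^1(\Omega)$.
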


We now proceed to the solvability of equation \eqref{wapp2b}.
 For any given function $\varphi^k\in H^1(\Omega)$, we define the nonlinear operator $\mathcal{S}_k: \dot{H}^1(\Omega) \rightarrow \dot{H}^{-1}(\Omega)$ as follows
\begin{align}\label{2ndOp}
\langle \mathcal{S}_k(\psi), \phi \rangle_{\dot{H}^{-1}(\Omega), \dot{H}^1(\Omega)}
=\epsilon(\nabla \psi, \nabla \phi)+\frac{1}{\epsilon}\big((\psi+\overline{\vp^k})^3, \phi\big), \quad \forall \phi \in \dot{H}^1(\Omega),
\end{align}
where $\overline{\vp^k}=|\Omega|^{-1}\int_\Omega \vp^k dx$.

Then we have

\begin{lemma}\label{LSk}
Let $\varphi^k\in H^1(\Omega)$ be fixed. For any given function $\mu_0 \in \dot{H}^{-1}(\Omega)$, there exists a unique solution $\psi \in \dot{H}^1(\Omega)$ to the equation $\mathcal{S}_k(\psi)=\mu_0$. The solution operator $\mathcal{S}_k^{-1}: \dot{H}^{-1}(\Omega) \rightarrow  \dot{H}^1(\Omega)$ is continuous. Moreover, if $\mu_0 \in \dot{H}^1(\Omega)$, then the solution satisfies $\psi \in \dot{H}^3(\Omega)$ and $\mathcal{S}_k^{-1}: \dot{H}^{1}(\Omega) \rightarrow  \dot{H}^3(\Omega)$ is bounded and continuous.
\end{lemma}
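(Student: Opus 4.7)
The plan is to recognize $\mathcal{S}_k(\psi)=\mu_0$ as the weak formulation of a semilinear Neumann problem on the mean-zero space $\dot H^1(\Omega)$, and attack it by a combination of the direct method of the calculus of variations and a monotonicity argument. First I would verify that $\mathcal{S}_k$ maps $\dot H^1(\Omega)$ into $\dot H^{-1}(\Omega)$: for $d\le 3$ the embedding $H^1(\Omega)\hookrightarrow L^6(\Omega)$ ensures that $(\psi+\overline{\varphi^k})^3\in L^2(\Omega)$, which defines a bounded linear functional on $\dot H^1(\Omega)$ after subtracting its mean.

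For existence and uniqueness I would observe that $\mathcal{S}_k$ is the Gateaux derivative on $\dot H^1(\Omega)$ of the functional
\begin{equation*}
J(\psi)=\frac{\epsilon}{2}\int_\Omega |\nabla \psi|^2\,dx+\frac{1}{4\epsilon}\int_\Omega (\psi+\overline{\varphi^k})^4\,dx-\langle\mu_0,\psi\rangle_{\dot H^{-1},\dot H^1},
\end{equation*}
which is strictly convex, weakly lower semicontinuous and coercive on $\dot H^1(\Omega)$. Coercivity follows from the Poincar\'e inequality for mean-zero functions together with non-negativity of the quartic term; strict convexity comes from the gradient term combined with the strict convexity of $s\mapsto s^4$. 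The direct method then produces a unique minimizer $\psi\in \dot H^1(\Omega)$ whose Euler--Lagrange equation is exactly $\mathcal{S}_k(\psi)=\mu_0$.

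Continuity of $\mathcal{S}_k^{-1}:\dot H^{-1}(\Omega)\to \dot H^1(\Omega)$ I would establish by a difference argument exploiting the monotonicity of $s\mapsto s^3$. If $\mathcal{S}_k(\psi_i)=\mu_i$ for $i=1,2$, pairing the difference with $\psi_1-\psi_2\in\dot H^1(\Omega)$ gives
\begin{equation*}
\epsilon\|\nabla(\psi_1-\psi_2)\|_{L^2}^2+\frac{1}{\epsilon}\int_\Omega\big[(\psi_1+\overline{\varphi^k})^3-(\psi_2+\overline{\varphi^k})^3\big](\psi_1-\psi_2)\,dx=\langle\mu_1-\mu_2,\psi_1-\psi_2\rangle,
\end{equation*}
whose second term on the left is non-negative, so combined with the Poincar\'e inequality this yields the Lipschitz bound $\|\psi_1-\psi_2\|_{\dot H^1}\le C\|\mu_1-\mu_2\|_{\dot H^{-1}}$.

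For the higher regularity statement when $\mu_0\in \dot H^1(\Omega)$, I would bootstrap via elliptic regularity for the homogeneous Neumann problem on the $C^{2,1}$ domain $\Omega$. The equation reads $-\epsilon\Delta\psi=\mu_0-\epsilon^{-1}\mathrm{P}_0[(\psi+\overline{\varphi^k})^3]$ with $\partial_{\mathbf{n}}\psi=0$; since $\psi\in H^1\hookrightarrow L^6$ the right-hand side lies in $L^2$, so standard Neumann regularity upgrades $\psi$ to $H^2(\Omega)$. The embedding $H^2\hookrightarrow L^\infty$ (for $d\le 3$) then yields $\nabla[(\psi+\overline{\varphi^k})^3]=3(\psi+\overline{\varphi^k})^2\nabla\psi\in L^2$, so the right-hand side is in $H^1$ and a second application of elliptic regularity gives $\psi\in H^3(\Omega)$. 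Continuity of $\mathcal{S}_k^{-1}:\dot H^1(\Omega)\to\dot H^3(\Omega)$ then follows by applying the same bootstrap to the difference equation, exploiting the uniform $L^\infty$ bound on the $\psi_i$ to linearize the cubic. The only point that requires mild care is verifying that the cubic nonlinearity does not destroy the Lipschitz estimates at higher regularity, but the local Lipschitz character of $s\mapsto s^3$ on bounded sets makes this routine.
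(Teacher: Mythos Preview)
Your proof is correct and the higher-regularity bootstrap is essentially identical to the paper's, but your existence/uniqueness argument takes a different route. The paper treats $\mathcal{S}_k$ as a bounded, continuous, coercive and strictly monotone operator from $\dot H^1(\Omega)$ to $\dot H^{-1}(\Omega)$ and invokes the Browder--Minty theorem directly; you instead recognize $\mathcal{S}_k$ as the G\^ateaux derivative of a strictly convex, coercive, weakly lower semicontinuous functional and apply the direct method. Both approaches are standard and equally valid here; yours is arguably more elementary since it avoids quoting monotone operator theory, while the paper's framing makes the structural reason for solvability (monotonicity of the cubic) more explicit. Your continuity argument is in fact sharper than the paper's: by pairing the difference equation with $\psi_1-\psi_2$ and dropping the nonnegative cubic term you obtain a genuine Lipschitz estimate $\|\psi_1-\psi_2\|_{\dot H^1}\le \epsilon^{-1}\|\mu_1-\mu_2\|_{\dot H^{-1}}$, whereas the paper only extracts sequential continuity from strict monotonicity. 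For the $\dot H^3$ continuity, both arguments run the same elliptic estimate on the difference, linearizing the cubic via the uniform $L^\infty$ control coming from the $H^2$ step; your closing remark correctly flags this as the only point needing care.
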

\begin{proof}
The unique solvability of equation $\mathcal{S}_k(\psi)=\mu_0$ for given source function $\mu_0$ can be obtained by the theory of monotone operators.

We note that $\mathcal{S}_k$ is well defined for any given function $\varphi^k \in H^1(\Omega)$. Indeed, using the Sobolev embedding $H^1(\Omega)\hookrightarrow L^6(\Omega)$ for $d=2, 3$, we can see that for any $\psi\in \dot{H}^1(\Omega)$,
\be \Big|\langle \mathcal{S}_k(\psi), \phi \rangle_{\dot{H}^{-1}(\Omega), \dot{H}^1(\Omega)}\Big|\leq C(\epsilon)\left(\|\psi\|_{H^1(\Omega)}^3+|\overline{\vp^k}|^3+\|\psi\|_{H^1(\Omega)}\right)\|\phi\|_{H^1(\Omega)},\non
\ee
 which implies the boundedness of $\mathcal{S}_k$ in $H^1(\Omega)$.
  Moreover, if a sequence $\psi_n \rightarrow \psi $ in $\dot{H}^1(\Omega)$ as $n\rightarrow \infty$, by H\"{o}lder's inequality and the  Sobolev embedding, we deduce that for any  $ \phi \in \dot{H}^1(\Omega)$,
\bea
&& \Big|\langle \mathcal{S}_k(\psi_n)-\mathcal{S}_k(\psi), \phi \rangle_{\dot{H}^{-1}(\Omega), \dot{H}^1(\Omega)}\Big|\non\\
 &\leq&
 C(\epsilon) \left(\|(\psi_n+\overline{\vp^k})^3-(\psi+\overline{\vp^k})^3\|_{L^\frac{6}{5}(\Omega)}
 +\|\nabla(\psi_n-\psi)\|_{\mathbf{L}^2(\Omega)}\right)\|\phi\|_{H^1(\Omega)}\non\\
 &\leq&  C(\epsilon)\big(\|\psi_n^2+\psi^2+(\overline{\vp^k})^2\|_{L^2(\Omega)}
 \|\psi_n-\psi\|_{L^3(\Omega)}+\|\nabla(\psi_n-\psi)\|_{\mathbf{L}^2(\Omega)}\big)
 \|\phi\|_{H^1(\Omega)}\non\\
 & \rightarrow& 0.\non
\eea
Hence, the nonlinear operator $\mathcal{S}_k: \dot{H}^1(\Omega)\rightarrow \dot{H}^{-1}(\Omega)$ is continuous. Concerning the coercivity of $\mathcal{S}_k$, using the Young inequality,  we have for any $\psi\in \dot{H}^1(\Omega)$,
\begin{eqnarray}
&& \langle \mathcal{S}_k(\psi), \psi \rangle_{\dot{H}^{-1}(\Omega), \dot{H}^1(\Omega)}\non\\
& =& \frac{1}{\epsilon}\int_\Omega (\psi+\overline{\vp^k})^3\psi\, dx 
+\epsilon \int_\Omega |\nabla \psi|^2\,dx \nonumber \\
&\geq& \frac{1}{\epsilon}\int_\Omega |\psi|^4\, dx-\frac{3|\overline{\vp^k}|}{\epsilon}\int_\Omega |\psi|^3 dx  -\frac{3|\overline{\vp^k}|^2}{\epsilon}\int_\Omega |\psi|^2 dx -\frac{|\overline{\vp^k}|^3}{\epsilon}\int_\Omega |\psi|dx\non\\
&& 
+\epsilon \int_\Omega |\nabla \psi|^2\,dx\nonumber \\
&\geq & C(\epsilon)\|\psi\|_{H^1(\Omega)}^2-C(\epsilon,|\Omega|,|\overline{\vp^k}|), \label{2ndCoer}
\end{eqnarray}
which yields that
\begin{align*}
\frac{\langle \mathcal{S}_k(\psi), \psi \rangle_{\dot{H}^{-1}(\Omega), \dot{H}^1(\Omega)} }{\|\psi\|_{H^1(\Omega)}} \rightarrow +\infty, \quad \text{ as } \|\psi\|_{H^1(\Omega)} \rightarrow \infty.
\end{align*}
Finally, the strict monotonicity of $\mathcal{S}_k$ follows from the following identity
\begin{eqnarray}\label{2ndMono}
&& \langle \mathcal{S}_k(\psi_1)-\mathcal{S}_k(\psi_2), \psi_1-\psi_2 \rangle_{\dot{H}^{-1}(\Omega), \dot{H}^1(\Omega)} \non\\
&=&\frac{1}{\epsilon}\int_\Omega (\psi_1-\psi_2)^2\left[(\psi_1+\overline{\vp^k})^2+(\psi_2+\overline{\vp^k})^2+(\psi_1+\overline{\vp^k})(\psi_2+\overline{\vp^k})\right]\,dx \non\\
&& + \epsilon\int_\Omega |\nabla(\psi_1-\psi_2)|^2\,dx\non\\
&\geq& 0,\quad \forall\, \psi_1, \psi_2 \in \dot{H}^1(\Omega)
\end{eqnarray}
and the equal sign holds if and only if $\psi_1=\psi_2$.

Based on the above observations, we can apply the Browder-Minty theorem (cf. e.g., \cite[pp. 39, Theorem 2.2]{Showalter1997}) to conclude the existence of a unique solution $\psi \in \dot{H}^1(\Omega)$ to the nonlinear equation  $\mathcal{S}_k(\psi)=\mu_0$ for a given source function $\mu_0 \in \dot H^{-1}(\Omega)$.
The coercive estimate \eqref{2ndCoer} also implies that
\begin{eqnarray}\label{BoundPhiH1}
\|\psi\|_{H^1(\Omega)}^2 \leq  C(\epsilon)\left(\|\mu_0\|_{\dot H^{-1}(\Omega)}^2+|\overline{\vp^k}|^4+1\right).
\end{eqnarray}
For the continuous dependence of the solution $\psi$ on $\mu_0$, i.e., if a sequence $\mu_{0n} \rightarrow \mu_0$ strongly in $\dot{H}^{-1}(\Omega)$ and $\mathcal{S}_k(\psi_n)=\mu_{0n}$, $\mathcal{S}_k(\psi)=\mu_0$, then $\psi_n, \psi\in \dot{H}^1(\Omega)$ and as $n\to +\infty$, it holds
\be
\langle \mathcal{S}_k(\psi_n)-\mathcal{S}_k(\psi), \psi_n-\psi \rangle_{\dot{H}^{-1}(\Omega), \dot{H}^1(\Omega)}=
\langle \mu_{0n}-\mu_0, \psi_n-\psi \rangle_{\dot{H}^{-1}(\Omega), \dot{H}^1(\Omega)}\to 0.
\ee
 Then a similar estimate like \eqref{2ndMono} yields that $\psi_n\to \psi$ strongly in $\dot{H}^1(\Omega)$. As a consequence, the solution operator $\mathcal{S}_k^{-1}: \dot{H}^{-1}(\Omega) \rightarrow  \dot{H}^1(\Omega)$ is continuous.

If  we further assume that $\mu_0\in \dot{H}^1(\Omega)$, the weak solution $\psi$ indeed  has higher regularity. To this end, we rewrite the weak form of the equation $\mathcal{S}_k(\psi)=\mu_0$ as
\begin{align}
\epsilon \big(\nabla \psi, \nabla \phi\big)=\Big(\mu_0-G(\psi,\vp^k), \phi\Big), \quad \forall \phi \in \dot{H}^1(\Omega),\non
\end{align}
where $G(\psi, \vp^k)=\epsilon^{-1}(\psi+\overline{\vp^k})^3\in L^2(\Omega)$.
Then $\psi$ is a weak solution to the following elliptic equation with homogeneous Neumann boundary condition:
\begin{equation}\label{p2ndStr}
\begin{cases}
&-\epsilon \Delta \psi=\mu_0-G_0, \quad \mbox{in}\ \Omega, \\
& \quad \displaystyle{\frac{\partial \psi}{\partial \mathbf{n}}}=0,\ \ \mbox{on}\ \partial \Omega,\\
& \int_\Omega \psi dx=0,
\end{cases}
\end{equation}
with $G_0=G(\psi, \vp^k)-\overline{G}(\psi,\vp^k)$.
Since the source function $\mu_0-G_0\in \dot{L}^2(\Omega)$, one deduces from the classical elliptic regularity theory (cf. \cite{Grisvard1985}) that $\psi \in H^2(\Omega)$ if $\Omega$ is $C^{1,1}$ or a convex bounded domain. In particular, one can derive from \eqref{p2ndStr} that
\begin{align}\label{p2ndLapH2}
\|\psi\|_{H^2(\Omega)} \leq C(\epsilon)\left(\|\mu_0\|_{L^2(\Omega)}+\|\psi\|_{H^1(\Omega)}^3
+|\overline{\vp^k}|^3+\|\psi\|_{L^2(\Omega)}\right),
\end{align}
Since $H^2(\Omega)$ is an algebra with respect to point-wise multiplication in $\mathbb{R}^d$ $(d\leq 3)$, one has $\mu_0-G_0 \in \dot H^1(\Omega)$. Then it follows from \eqref{p2ndStr}, \eqref{p2ndLapH2}  that
\begin{eqnarray}\label{p2ndLapH3}
\|\psi\|_{H^3}&\leq& C(\epsilon)\left(\|\mu_0\|_{H^1(\Omega)}+|\overline{\vp^k}|^3+\|\psi^3\|_{H^1(\Omega)}+\|\psi\|_{L^2(\Omega)}\right) \nonumber \\
&\leq & C(\epsilon)\left(\|\mu_0\|_{H^1(\Omega)}+|\overline{\vp^k}|^3+\|\psi\|_{L^2(\Omega)}\right)\nonumber\\
&& +C(\epsilon)\left(\|\psi\|_{L^\infty(\Omega)}^2 \|\nabla \psi\|_{\mathbf{L}^2(\Omega)}+\|\psi\|_{L^6(\Omega)}^3\right) \nonumber \\
&\leq & C(\epsilon, \Omega, \|\mu_0\|_{H^1(\Omega)}, |\overline{\vp^k}|),
\end{eqnarray}
which yields that the solution operator $\psi=\mathcal{S}_k^{-1}(\mu_0)$ is bounded from $\dot{H}^1(\Omega)$ to $\dot{H}^3(\Omega)$. Consider the difference problem
\begin{equation}\label{p2ndStrd}
\begin{cases}
&-\epsilon \Delta (\psi_n-\psi)=(\mu_{0n}-\mu_0)-(G_{0n}-G_0), \\
& \quad \displaystyle{\frac{\partial (\psi_n-\psi)}{\partial \mathbf{n}}}=0,\ \ \mbox{on}\ \partial \Omega,
\end{cases}
\end{equation}
with $G_{0n}=G(\psi_n, \vp^k)-\overline{G}(\psi_n,\vp^k)$ and $G_0=G(\psi, \vp^k)-\overline{G}(\psi,\vp^k)$.
Assuming that $\mu_{0n} \rightarrow \mu_0$ strongly in $\dot{H}^1(\Omega)$, similar to \eqref{p2ndLapH2}, we can first derive the $H^2$ estimates for $\psi_n$, $\psi$, and then use the elliptic estimates as in \eqref{p2ndLapH3} to get
\bea
\|\psi_n-\psi\|_{H^3(\Omega)}
&\leq& C(\|\mu_{0n}-\mu_0\|_{H^1(\Omega)}+\|G_{0n}-G_0\|_{H^1(\Omega)}+\|\psi_n-\psi\|_{L^2(\Omega)})\non\\
&\leq&  C(\|\psi_n\|_{L^\infty(\Omega)}, \|\psi\|_{L^\infty(\Omega)}, \|\nabla \psi_n\|_{\mathbf{L}^3(\Omega)}, \|\nabla \psi\|_{\mathbf{L}^3(\Omega)})\|\psi_n-\psi\|_{H^1(\Omega)}\non\\
&& +C\|\mu_{0n}-\mu_0\|_{H^1(\Omega)}.\non
\eea
We have already shown that $\mathcal{S}_k^{-1}: \dot{H}^{-1}(\Omega) \rightarrow  \dot{H}^1(\Omega)$ is continuous, which  combining the above estimate further yields  that $\mathcal{S}_k^{-1}: \dot{H}^{1}(\Omega) \rightarrow  \dot{H}^3(\Omega)$ is also (strongly) continuous. The proof is complete.
\end{proof}

\textbf{Step 2. Definition of operators $\mathcal{T}_k$, $\mathcal{G}_k$ and their properties.}

We introduce the following product spaces
\be
\begin{cases}
 \mathbf{X}=\mathbf{V}\times \dot{H}^1(\Omega)\times \dot{H}^{3-\sigma}(\Omega)\times \mathbb{R},\\
 \mathbf{Y}=\mathbf{V}'\times \dot{H}^{-1}(\Omega)\times \dot{L}^{2}(\Omega)\times \mathbb{R},
 \end{cases}
 \label{XY}
\ee
where $\sigma\in (0, \frac12)$ is a constant.

 Owing to the mass-conservation property \eqref{MC} of the approximate scheme and for the convenience of the norm of $\dot{H}^1(\Omega)$, we will project the unknowns $\varphi$ and $\mu$ into $\dot{L}^2(\Omega)$ such that $$ \varphi=\psi+\overline{\varphi^k}, \quad \mu=\mu_0+ \overline{S}_k, $$
where $\overline{\varphi^k}$ and $\overline{S}_k$ are the average of $\varphi_k$ and $\widetilde{f}(\varphi,\varphi^k)$ on $\Omega$, respectively.

According to the formulation of the system \eqref{wapp1}--\eqref{wapp2b}, we now introduce the nonlinear operators $\mathcal{T}_k$, $\mathcal{G}_k: \mathbf{X}\to \mathbf{Y}$.
 For any given functions $\varphi^k\in H^1(\Omega)$, $\ub_c^k\in \mathbf{L}^2(\Omega_c)$ and for $\mathbf{w}=(\ub_c, P_m, \mu_0, \psi, \overline{S}_{k})\in \mathbf{X}$,  we define
\begin{equation}
\mathcal{T}_k (\mathbf{w}) =
\left(
  \begin{array}{c}
    \mathcal{L}_k(\ub_c,P_m)\\
    \mathcal{D}_k (\mu_0)\\
    \mathcal{S}_k (\psi)\\
    \overline{S}_{k}
  \end{array}
\right),   \label{Tk}
\end{equation}
and
\begin{equation}
\mathcal{G}_k (\mathbf{w}) =
\left(
  \begin{array}{c}
   \mathcal{J}_k(\mathbf{w}) \\
    \mathrm{P}_0\Big({\delta}^{-1}(\psi+\overline{\vp^k}-\vp^k)+\ub\cdot \nabla \psi\Big)\\
    \mu_0+\epsilon^{-1}(\vp^k-\overline{\vp^k})\\
    |\Omega|^{-1}\epsilon^{-1}\int_{\Omega} \Big((\psi+\overline{\vp^k})^3-\vp^k\Big)dx
  \end{array}
\right),  \label{Gk}               %
\end{equation}
  The operators $\mathcal{L}_k$, $\mathcal{D}_k$, $\mathcal{S}_k$ in \eqref{Tk} are defined in \eqref{Lk}, \eqref{Dk} and \eqref{2ndOp} (associated with the given function $\varphi^k$), respectively. In \eqref{Gk}, the operator $\mathcal{J}_k: \mathbf{X}\to \mathbf{V}'$ is given by
 \bea
 && \langle \mathcal{J}_k(\mathbf{w}), (\mathbf{v}_c, q_m) \rangle_{\mathbf{V}', \mathbf{V}}\non\\
 &=& \left(-\displaystyle{\frac{\varpi}{\delta}}(\ub_c-\ub_c^k) +(\mu_{0c}+ \overline{S}_{k}) \nabla\psi_c, \ \mathbf{v}_c\right)_c+(\ub_c, \mathbf{v}_c)_c\non\\
 && + \left( \displaystyle{\frac{\Pi}{\nu(\vp_m^k)}}(\mu_{0m}+ \overline{S}_{k})\nabla\psi_m, \nabla q_m\right)_m, \quad \forall \, (\mathbf{v}_c, q_m) \in \mathbf{V}. \label{Jk}
 \eea
Here, one recalls that $P_0$ is the projection operator from $L^2(\Omega)$ into $\dot{L}^2(\Omega)$ and the facts $\mu_{0c}=\mu_0|_{\Omega_c}$, $\mu_{0m}=\mu_0|_{\Omega_m}$. The velocity $\ub$ in \eqref{Gk} fulfills $\ub|_{\Omega_c}=\ub_c$, $\ub|_{\Omega_m}=\ub_m$ and $\ub_m$ is given by \eqref{app3}.

From the definition of $\mathcal{T}_k$ and Lemmas \ref{LLk}--\ref{LSk} obtained in the previous step, one can conclude that
\begin{lemma}\label{LTG}
    $\mathcal{T}_k: \mathbf{X}\to\mathbf{Y}$ is an invertible mapping and its inverse $\mathcal{T}^{-1}_k: \mathbf{Y}\to \mathbf{X}$ is continuous. In particular, $\mathcal{T}_k^{-1}(\mathbf{0})=\mathbf{0}$.
\end{lemma}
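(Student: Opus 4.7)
The operator $\mathcal{T}_k$ as defined in \eqref{Tk} has a ``block-diagonal'' structure: each of its four components depends on a single, independent factor of $\mathbf{X}=\mathbf{V}\times \dot H^1(\Omega)\times \dot H^{3-\sigma}(\Omega)\times \mathbb{R}$. The plan is therefore to reduce invertibility of $\mathcal{T}_k$ to invertibility of each block and to piece together the continuous inverses supplied by Lemmas \ref{LLk}, \ref{LDk}, \ref{LSk}, together with the identity map on $\mathbb{R}$.

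First, I would verify that $\mathcal{T}_k$ genuinely maps $\mathbf{X}$ into $\mathbf{Y}=\mathbf{V}'\times \dot H^{-1}(\Omega)\times \dot L^{2}(\Omega)\times \mathbb{R}$. The only nontrivial check is for the third slot: given $\psi\in \dot H^{3-\sigma}(\Omega)$ with $\sigma\in(0,\tfrac12)$, we have $3-\sigma>\tfrac{d}{2}$ for $d\le 3$, so $H^{3-\sigma}$ embeds continuously into $L^\infty$ and is a Banach algebra. Hence $(\psi+\overline{\vp^k})^3\in H^{3-\sigma}(\Omega)$ and $\Delta\psi\in H^{1-\sigma}(\Omega)\hookrightarrow L^2(\Omega)$, so after subtracting the mean of the nonlinear part (which is what the weak formulation \eqref{2ndOp} does by testing only against $\phi\in\dot H^1$), $\mathcal{S}_k(\psi)\in \dot L^2(\Omega)$. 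The other three blocks are manifestly well-defined.

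Second, for invertibility I would solve the four uncoupled equations $\mathcal{L}_k(\ub_c,P_m)=\ell$, $\mathcal{D}_k(\mu_0)=\nu$, $\mathcal{S}_k(\psi)=\mu_0^*$, and $\overline{S}_k=c$, using Lemmas \ref{LLk}, \ref{LDk}, \ref{LSk} for the first three. For the third, Lemma \ref{LSk} produces a unique $\psi\in\dot H^1(\Omega)$; then a standard elliptic bootstrap on the Neumann problem
\begin{equation*}
-\epsilon\Delta\psi=\mu_0^*-\epsilon^{-1}\bigl[(\psi+\overline{\vp^k})^3-\overline{(\psi+\overline{\vp^k})^3}\bigr] \quad\text{in }\Omega,\qquad \partial_{\mathbf n}\psi=0 \text{ on }\partial\Omega,
\end{equation*}
combined with the Sobolev embedding $H^2\hookrightarrow L^\infty$ in $d\le 3$ and the algebra property of $H^2$, places the right-hand side in $L^2$ and then in the desired fractional Sobolev class; the $C^{2,1}$ regularity of $\partial\Omega$ permits the corresponding fractional elliptic estimate, putting $\psi$ into $\dot H^{3-\sigma}(\Omega)$. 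Continuity of $\mathcal{T}_k^{-1}$ is then obtained component-wise: the first two blocks are linear isomorphisms, the fourth is trivial, and continuity of $\mathcal{S}_k^{-1}:\dot L^2(\Omega)\to \dot H^{3-\sigma}(\Omega)$ follows by combining the $\dot H^{-1}\to\dot H^1$ continuity from Lemma \ref{LSk} with the above bootstrap estimate applied to a difference problem for two solutions, as in the proof of Lemma \ref{LSk}.

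For the final claim $\mathcal{T}_k^{-1}(\mathbf{0})=\mathbf{0}$ I would treat each component separately. The vanishing of $\mathcal{L}_k^{-1}(\mathbf{0})$ and $\mathcal{D}_k^{-1}(\mathbf{0})$ is immediate from the Lax--Milgram characterization used in Lemmas \ref{LLk}--\ref{LDk}. For $\mathcal{S}_k$, observe that testing \eqref{2ndOp} with a constant is vacuous on $\dot H^1$, so $\mathcal{S}_k(0)$ is the distribution $\phi\mapsto \epsilon^{-1}(\overline{\vp^k})^3\int_\Omega\phi\,dx=0$ on $\dot H^1(\Omega)$; hence $\mathcal{S}_k(0)=0$, and the strict monotonicity \eqref{2ndMono} forces $\mathcal{S}_k(\psi)=0\Rightarrow\psi=0$. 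The identity block handles the last coordinate.

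The only real obstacle is the range analysis for $\mathcal{S}_k$, i.e., upgrading the $\dot H^1$ solution produced by the Browder--Minty step of Lemma \ref{LSk} all the way to $\dot H^{3-\sigma}(\Omega)$ when the data is only in $\dot L^2(\Omega)$; everything else is a routine assembly of previously established results. This regularity upgrade is where the hypothesis $\sigma\in(0,\tfrac12)$ and the $C^{2,1}$ regularity of $\partial\Omega$ are used, and it will also be the point one must revisit to control the continuous dependence of $\mathcal{T}_k^{-1}$ in the topology of $\dot H^{3-\sigma}(\Omega)$.
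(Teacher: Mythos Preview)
Your approach---block-diagonal decomposition and citation of Lemmas \ref{LLk}--\ref{LSk}---is exactly the paper's; the paper offers no detail beyond ``from the definition of $\mathcal{T}_k$ and Lemmas \ref{LLk}--\ref{LSk} \ldots one can conclude.'' You also correctly single out the third block as the only nontrivial one.

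However, your bootstrap for that block does not close, and this is a genuine gap (in fact a minor imprecision shared by the paper's choice of spaces). With data $\mu_0^*\in\dot L^2(\Omega)$ only, the Neumann problem $-\epsilon\Delta\psi=\mu_0^*-G_0$ yields $\psi\in\dot H^2$ after one round; then $G_0=\epsilon^{-1}\mathrm{P}_0(\psi+\overline{\vp^k})^3\in\dot H^2$ by the algebra property, but $\mu_0^*$ remains merely in $\dot L^2$, so the right-hand side is still only $\dot L^2$ and elliptic regularity cannot push $\psi$ past $\dot H^2$. Thus $\mathcal{S}_k:\dot H^{3-\sigma}(\Omega)\to\dot L^2(\Omega)$ is \emph{not} onto, and the lemma as literally stated fails for that block. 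Two repairs are available. First, replace the third factor of $\mathbf{Y}$ by $\dot H^{1-\sigma}(\Omega)$: then given $\mu_0^*\in\dot H^{1-\sigma}$, after $\psi\in\dot H^2$ one has $G_0\in\dot H^2\subset\dot H^{1-\sigma}$, the right-hand side lies in $\dot H^{1-\sigma}$, and fractional elliptic regularity on the $C^{2,1}$ domain gives $\psi\in\dot H^{3-\sigma}$; Lemma \ref{LGk} is unaffected because the third component of $\mathcal{G}_k(\mathbf{w})$ is $\mu_0+\epsilon^{-1}(\vp^k-\overline{\vp^k})\in\dot H^1\hookrightarrow\hookrightarrow\dot H^{1-\sigma}$. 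Second, leave the spaces as they are and observe that the degree argument only ever applies $\mathcal{T}_k^{-1}$ to the range of $\mathcal{G}_k$, whose third component sits in $\dot H^1$; on that subspace $\mathcal{S}_k^{-1}:\dot H^1\to\dot H^3\subset\dot H^{3-\sigma}$ is continuous by Lemma \ref{LSk}, which is all that is needed for the compactness of $\mathcal{N}_k=\mathcal{T}_k^{-1}\circ\mathcal{G}_k$.
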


Then concerning the operator $\mathcal{G}_k$, one has
\begin{lemma}\label{LGk}
    $\mathcal{G}_k: \mathbf{X}\to\mathbf{Y}$ is a continuous and bounded mapping. Moreover, it is compact.
\end{lemma}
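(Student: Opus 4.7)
The strategy is straightforward: dissect $\mathcal{G}_k$ into its four components and verify boundedness, continuity on bounded sets, and compactness, exploiting throughout the fact that the domain $\mathbf{X}$ measures $\psi$ in $\dot H^{3-\sigma}(\Omega)$ with $\sigma\in(0,\tfrac12)$, so that in dimensions $d=2,3$ one has the continuous embedding $\dot H^{3-\sigma}(\Omega)\hookrightarrow W^{1,\infty}(\Omega)$ and the compact embedding $\dot H^{3-\sigma}(\Omega)\hookrightarrow\hookrightarrow C^1(\overline\Omega)$. Without this regularity cushion the claim would be false; the entire point of choosing the $3-\sigma$ slot in \eqref{XY} is to produce this compactness.

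For boundedness and continuity I would treat each entry separately. In the $\mathcal{J}_k$-entry (mapping to $\mathbf{V}'$), the delicate terms are $(\mu_0+\bar S_k)\nabla\psi$, which lie in $\mathbf{L}^2$ since $\mu_0\in\dot H^1\hookrightarrow L^6$ and $\nabla\psi\in L^\infty$, and then pair with test functions in $\mathbf{V}$; the remaining $(\ub_c-\ub_c^k,\mathbf{v}_c)_c+(\ub_c,\mathbf{v}_c)_c$ and the Darcy piece with $\nabla q_m$ are linear and harmless under assumptions (A1)--(A3). The second entry collects $\delta^{-1}(\psi+\overline{\vp^k}-\vp^k)\in L^2$ and the transport term $\ub\cdot\nabla\psi\in L^2$ (recall $\ub_m$ is given by \eqref{app3} and is bounded in $\mathbf{L}^2(\Omega_m)$ thanks to the same $L^2$ bound on $(\mu_0+\bar S_k)\nabla\psi_m$ and on $\nabla P_m$), which lands in $\dot L^2\hookrightarrow\dot H^{-1}$ after applying $P_0$. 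The third entry is affine in $\mu_0$ and the fourth is scalar controlled by $\|\psi\|_{L^\infty}^3$. Continuity on bounded sets then follows from the algebraic identity $a_1b_1-a_2b_2=(a_1-a_2)b_1+a_2(b_1-b_2)$ applied to each product together with the same estimates.

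The substantive claim is compactness, and this is where I expect the main obstacle. Let $\mathbf{w}^n=(\ub_c^n,P_m^n,\mu_0^n,\psi^n,\bar S_k^n)$ be bounded in $\mathbf{X}$. Rellich--Kondrachov supplies a subsequence (not relabelled) with $\ub_c^n\to\ub_c$ strongly in $\mathbf{L}^q(\Omega_c)$ for every $q<6$, $P_m^n\to P_m$ strongly in $L^q(\Omega_m)$ for $q<6$, $\mu_0^n\to\mu_0$ strongly in $L^q(\Omega)$ for $q<6$, $\bar S_k^n\to\bar S_k$ in $\mathbb{R}$, and, crucially, $\psi^n\to\psi$ strongly in $C^1(\overline\Omega)$. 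These strong convergences then propagate to $\mathcal{G}_k(\mathbf{w}^n)$ in the norm topology of $\mathbf{Y}$: the products $(\mu_0^n+\bar S_k^n)\nabla\psi^n$ converge in $\mathbf{L}^2$ (strong in $\nabla\psi^n$, strong in $\mu_0^n$), the cube $(\psi^n+\overline{\vp^k})^3$ converges in $L^\infty$, and the linear terms are trivial.

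The technical point I expect to be the most delicate is the transport term $\ub_m^n\cdot\nabla\psi^n$ that enters the second slot, because $\ub_m^n$ is defined only implicitly through \eqref{app3} from $(\mu_0^n,\psi^n,P_m^n)$. The resolution is to substitute \eqref{app3} to rewrite this transport term as a concrete polynomial expression in the available quantities, at which point the already-obtained strong convergences (notably $\nabla\psi^n\to\nabla\psi$ in $L^\infty$ and $\nabla P_m^n\to\nabla P_m$ in $\mathbf{L}^2$ following from $P_m^n\to P_m$ in $\dot H^1$ after compactness) yield the convergence of the product in $L^2(\Omega_m)\hookrightarrow\dot H^{-1}(\Omega)$ and hence in the target norm. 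Collecting all four slots shows that $\mathcal{G}_k(\mathbf{w}^n)$ has a strongly convergent subsequence in $\mathbf{Y}$, proving compactness.
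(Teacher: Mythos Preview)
Your overall strategy is sound and most of the estimates go through, but there is one genuine error in the compactness argument. You claim that ``$\nabla P_m^n\to\nabla P_m$ in $\mathbf{L}^2$ following from $P_m^n\to P_m$ in $\dot H^1$ after compactness.'' This is false: boundedness of $\{P_m^n\}$ in $\dot H^1(\Omega_m)$ yields only weak convergence in $\dot H^1$ (and strong convergence in $L^q$ for $q<6$); there is no Rellich-type mechanism that upgrades this to strong convergence of the gradients in $\mathbf{L}^2$. Consequently you cannot conclude strong $L^2$-convergence of $\ub_m^n\cdot\nabla\psi^n$ by the route you describe, because $\ub_m^n$ inherits the $\nabla P_m^n$ term through \eqref{app3}.

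The fix is precisely the observation the paper exploits: one does not need strong convergence in $L^2$, only precompactness in the target $\dot H^{-1}(\Omega)$. Since $\nabla\psi^n$ is bounded in $L^\infty$ and $\ub_m^n$ is bounded in $\mathbf{L}^2(\Omega_m)$, the product $\ub_m^n\cdot\nabla\psi^n$ is bounded in $L^2(\Omega_m)$; the compact embedding $\dot L^2(\Omega)\hookrightarrow\hookrightarrow \dot H^{-1}(\Omega)$ then supplies a strongly convergent subsequence in $\dot H^{-1}$ automatically. The paper organizes the whole proof around this idea: rather than tracking subsequences component by component, it simply verifies that $\mathcal{G}_k$ maps $\mathbf{X}$ boundedly and continuously into the intermediate space
\[
\big(\mathbf{L}^2(\Omega_c)\times (H^{1-\sigma}(\Omega_m))'\big)\times \dot L^2(\Omega)\times \dot H^1(\Omega)\times K,
\]
which compactly embeds in $\mathbf{Y}$, and is done in two lines. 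Your hands-on extraction of subsequences ultimately amounts to the same thing, but the intermediate-space formulation both avoids the slip above and makes the role of the $3-\sigma$ slot (buying $H^{2-\sigma}\hookrightarrow L^\infty$ for $\nabla\psi$) equally transparent with less bookkeeping.
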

\begin{proof}
For all $\mathbf{w}=(\ub_c, P_m,  \mu_0, \psi, \overline{S}_{k})\in \mathbf{X}$, using the Sobolev embedding theorems  $(d\leq 3)$ such that $H^1\hookrightarrow L^6$ and $H^{1-\sigma}\hookrightarrow L^3$, $H^{2-\sigma}\hookrightarrow L^\infty$ for $\sigma\in (0, \frac12)$, it is straightforward to show that
\be
\mathcal{G}_k (\mathbf{w})\in \left(\mathbf{L}^2(\Omega_c)
\times (H^{1-\sigma}(\Omega_m))'\right)\times \dot{L}^2 (\Omega)\times \dot{H}^1(\Omega)\times K \hookrightarrow\hookrightarrow \mathbf{Y},\non
\ee
where $K$ is a bounded set in $\mathbb{R}$. Our conclusion easily follows.
\end{proof}

We now interpret the relation between the abstract equation $\mathcal{T}_k(\mathbf{w})=\mathcal{G}_k(\mathbf{w})$ for $\mathbf{w} \in \mathbf{X}$ and the elliptic system \eqref{app1}--\eqref{app2b}. The following equivalence result can be easily seen from the definitions \eqref{Lk}--\eqref{2ndOp} and \eqref{Tk}--\eqref{Jk}:
\begin{proposition}\label{Abequi}
 $\{\ub_c, P_m, \varphi, \mu\}\in  \mathbf{H}_{c,{\rm div}} \times X_m\times  H^3(\Omega)\times  H^1(\Omega)$ is a solution of the system \eqref{app1}--\eqref{app2b} if and only if
$\mathbf{w}=(\ub_c, P_m, \mu_0, \psi, \overline{S}_k) \in \mathbf{X}$ satisfies
$\mathcal{T}_k(\mathbf{w})=\mathcal{G}_k(\mathbf{w})$ with $\vp=\psi+ \overline{\vp^k}$, $\mu=\mu_{0}+\overline{S}_{k}$.
\end{proposition}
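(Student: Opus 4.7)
The statement is an equivalence between the weak elliptic system \eqref{app1}--\eqref{app2b} in the variables $(\mathbf{u}_c, P_m, \varphi, \mu)$ and the vector equation $\mathcal{T}_k(\mathbf{w}) = \mathcal{G}_k(\mathbf{w})$ in $\mathbf{Y}$ in the variables $\mathbf{w} = (\mathbf{u}_c, P_m, \mu_0, \psi, \overline{S}_k)$. My plan is to match the five component equations of the abstract equation against the three weak equations one by one, using the decomposition $\varphi = \psi + \overline{\varphi^k}$, $\mu = \mu_0 + \overline{S}_k$ to separate mean-zero parts from their averages. The only real issue is bookkeeping of constants: one must make sure that projecting the test spaces down to $\dot{H}^1(\Omega)$ loses no information.

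\textbf{Forward direction.} Given a solution of \eqref{app1}--\eqref{app2b}, the mass conservation identity \eqref{MC} in Lemma \ref{LMC} gives $\overline{\varphi} = \overline{\varphi^k}$, so $\psi := \varphi - \overline{\varphi^k} \in \dot{H}^3(\Omega) \hookrightarrow \dot{H}^{3-\sigma}(\Omega)$. Testing \eqref{app2b} with the constant $\phi \equiv 1$ identifies $\overline{\mu}$ with the scalar $\overline{S}_k$ appearing in the fifth component of $\mathcal{G}_k$; setting $\mu_0 := \mu - \overline{S}_k \in \dot{H}^1(\Omega)$ then locates $\mathbf{w}$ correctly inside $\mathbf{X}$. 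The $\mathbf{V}'$-component drops out by adding $(\mathbf{u}_c,\mathbf{v}_c)_c$ to both sides of \eqref{app1} and noting $\mu\nabla\varphi = (\mu_0 + \overline{S}_k)\nabla\psi$; the $\mathcal{D}_k$-component comes from restricting \eqref{app2a} to mean-zero test functions and applying $\mathrm{P}_0$ to its right-hand side; the $\mathcal{S}_k$-component is \eqref{app2b} rewritten after subtracting the mean.

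\textbf{Backward direction and main difficulty.} Conversely, given $\mathbf{w} \in \mathbf{X}$ satisfying $\mathcal{T}_k(\mathbf{w}) = \mathcal{G}_k(\mathbf{w})$, set $\varphi := \psi + \overline{\varphi^k}$ and $\mu := \mu_0 + \overline{S}_k$; the regularity statement of Lemma \ref{LSk} upgrades $\psi$ from $\dot{H}^{3-\sigma}(\Omega)$ to $\dot{H}^3(\Omega)$ (the source of the $\mathcal{S}_k$-equation lies in $\dot{H}^1(\Omega)$), so $\varphi \in H^3(\Omega)$ and $\mu \in H^1(\Omega)$ as required. Equation \eqref{app1} is recovered by subtracting the redundant $(\mathbf{u}_c,\mathbf{v}_c)_c$ from both sides of the first component. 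The delicate point is verifying \eqref{app2a} against \emph{all} test functions $\phi \in H^1(\Omega)$, not merely mean-zero ones: decomposing $\phi = \dot{\phi} + \overline{\phi}$, the mean-zero part is immediate from the $\mathcal{D}_k$-component, while the constant part reduces to the identity $\int_\Omega(\varphi-\varphi^k)\,dx + \delta\int_\Omega \mathbf{u}\cdot\nabla\varphi\,dx = 0$. The first integral vanishes because the fifth scalar component combined with $\psi \in \dot{L}^2(\Omega)$ yields $\overline{\varphi} = \overline{\varphi^k}$; the second vanishes because the $q_m$-test in the first abstract component reproduces \eqref{wofum}, which together with \eqref{IBC0} and the $\Gamma_m$ condition gives $\mathbf{u} \in \mathbf{H}(\mathrm{div};\Omega)$ with $\mathbf{u}\cdot\mathbf{n}|_{\partial\Omega}=0$, so integration by parts kills the integral. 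Finally \eqref{app2b} is reconstructed from the $\mathcal{S}_k$-component together with the scalar component fixing $\overline{\mu}$. This interface compatibility of the composite velocity field is the one place where the domain decomposition enters non-trivially; everything else is direct unwrapping of definitions.
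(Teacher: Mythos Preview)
Your proposal is correct and follows the same route as the paper, which dispatches the proposition in one line by saying it ``can be easily seen from the definitions'' of $\mathcal{L}_k$, $\mathcal{D}_k$, $\mathcal{S}_k$, $\mathcal{T}_k$, $\mathcal{G}_k$, $\mathcal{J}_k$; you have simply filled in the bookkeeping that the paper omits, including the regularity bootstrap $\psi\in\dot H^{3-\sigma}\Rightarrow\psi\in\dot H^3$ via Lemma~\ref{LSk} and the check that \eqref{app2a} holds against constant test functions. One small remark: the vanishing of $\int_\Omega(\varphi-\varphi^k)\,dx$ in your backward direction follows already from the definition $\varphi=\psi+\overline{\varphi^k}$ with $\psi\in\dot L^2(\Omega)$, without invoking the fifth scalar component.
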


\textbf{Step 3. Solvability of the nonlinear system \eqref{app1}--\eqref{app3}}

We proceed to show that there exists a $\mathbf{w} \in \mathbf{X}$ such that $\mathcal{T}_k(\mathbf{w})=\mathcal{G}_k(\mathbf{w})$. Since $\mathcal{T}_k$ is invertible, this abstract equation can be rewritten equivalently as $\mathbf{w}=\mathcal{T}_k^{-1}(\mathcal{G}_k(\mathbf{w}))$, namely,
\be
(\mathcal{I}-\mathcal{N}_k)(\mathbf{w})=\mathbf{0}.\label{IaNK}
\ee
  where $\mathcal{I}$ is the identity operator on $\mathbf{X}$ and the nonlinear operator $\mathcal{N}_k$ is defined by
\be
\mathcal{N}_k(\mathbf{w}):=\mathcal{T}_k^{-1}(\mathcal{G}_k(\mathbf{w})): \mathbf{X}\to \mathbf{X}, \quad \forall\, \mathbf{w}\in \mathbf{X}\label{NNN}
 \ee
 and it is a compact operator on $\mathbf{X}$ due to Lemmas \ref{LTG} and \ref{LGk}. Thus we only have to prove that there exists a vector $\mathbf{w}=(\ub_c, P_m, \mu_0, \psi, \overline{S}_k)\in \mathbf{X}$ that satisfies equation \eqref{IaNK}. This can be done by a homotopy argument based on the Leray-Schauder degree (cf. \cite{De, Ab09}).
\begin{lemma}\label{degree}
Assume that assumptions (A1)--(A3) are satisfied. For any $\ub_c^k\in \mathbf{L}^2(\Omega_c)$ and $\vp^k\in H^1(\Omega)$, the equation $\mathcal{T}_k(\mathbf{w})=\mathcal{G}_k(\mathbf{w})$ admits a solution  $\mathbf{w}=(\ub_c, P_m, \mu_0, \psi, \overline{S}_k)\in \mathbf{X}$.
\end{lemma}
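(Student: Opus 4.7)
The plan is to establish the existence of a fixed point $\mathbf{w} \in \mathbf{X}$ satisfying the equivalent equation $(\mathcal{I}-\mathcal{N}_k)(\mathbf{w}) = \mathbf{0}$ via the Leray-Schauder degree theory, using the discrete energy law from Lemma \ref{DEE} to supply the crucial a priori bounds. Since Lemma \ref{LGk} and Lemma \ref{LTG} together imply $\mathcal{N}_k = \mathcal{T}_k^{-1} \circ \mathcal{G}_k$ is a compact operator on $\mathbf{X}$, it is natural to introduce the continuous homotopy $\mathcal{H}(\mathbf{w},\lambda) := \mathbf{w} - \lambda\, \mathcal{N}_k(\mathbf{w})$ for $\lambda \in [0,1]$. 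At $\lambda=0$, the unique solution of $\mathcal{H}(\mathbf{w},0)=\mathbf{0}$ is $\mathbf{w}=\mathbf{0}$ by Lemma \ref{LTG}; at $\lambda=1$, we recover the desired equation.

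The heart of the argument is establishing a uniform a priori bound: there exists $R>0$, depending only on $\ub_c^k$, $\vp^k$, $\delta$, $\epsilon$, and the structural constants in (A1)--(A3), but independent of $\lambda \in [0,1]$, such that every solution $\mathbf{w} = (\ub_c, P_m, \mu_0, \psi, \overline{S}_k)$ of $\mathcal{H}(\mathbf{w},\lambda)=\mathbf{0}$ satisfies $\|\mathbf{w}\|_{\mathbf{X}} \leq R$. The equation $\mathcal{T}_k(\mathbf{w}) = \lambda\, \mathcal{G}_k(\mathbf{w})$ corresponds to a $\lambda$-parametrized version of the discrete system \eqref{app1}--\eqref{app3}, in which the time-difference terms, the elastic forcing $\mu\nabla\varphi$, and the convective term $\ub\cdot\nabla\vp$ all pick up a factor $\lambda$. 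I would repeat the energy computation of Lemma \ref{DEE} on this modified system: take $(\mathbf{v}_c, q_m) = (\ub_c, P_m)$ in the analogue of \eqref{wapp1}, $\phi = \mu$ in the analogue of \eqref{wapp2a}, and $\phi = \vp - \vp^k$ in the analogue of \eqref{wapp2b}. The key cancellations involving $(\mu\nabla\vp, \ub)$ and $(\ub\cdot\nabla\vp, \mu)$ are preserved since both terms carry the same factor $\lambda$, while the convex-splitting inequality \eqref{eek2} still provides control of $F(\vp)$. This yields a bound of the form $\|\ub_c\|_{\mathbf{L}^2(\Omega_c)} + \|\nabla\vp\|_{\mathbf{L}^2(\Omega)} + \|F(\vp)\|_{L^1(\Omega)}^{1/2} + \lambda^{1/2}\|\ub_c\|_{\mathbf{H}^1(\Omega_c)} + \|\nabla\mu\|_{\mathbf{L}^2(\Omega)}^2 \leq C$, which combined with the mass conservation \eqref{MC} (which holds for every $\lambda>0$ after testing with $\phi=1$) gives full $H^1$ control of $\vp$ and $\mu$.

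Once the $H^1$-bounds on $(\ub_c, \vp, \mu)$ are in hand, the remaining $\mathbf{X}$-components are controlled by appealing to the invertibility lemmas from Step 1. From Lemma \ref{LLk}, $\mathcal{L}_k(\ub_c,P_m) = \lambda \mathcal{J}_k(\mathbf{w})$ together with the explicit form of $\mathcal{J}_k$ and the already-established bounds for $\ub_c$ and $(\mu_0+\overline{S}_k)\nabla\psi$ in $\mathbf{V}'$ provides $\|P_m\|_{H^1(\Omega_m)} \leq C$. From Lemma \ref{LSk}, $\psi = \mathcal{S}_k^{-1}(\lambda(\mu_0+\epsilon^{-1}(\vp^k-\overline{\vp^k})))$ is bounded in $\dot H^3(\Omega)$, hence certainly in $\dot H^{3-\sigma}(\Omega)$, while the scalar $\overline{S}_k$ is controlled directly from its defining integral via the $L^6$-bound on $\vp = \psi + \overline{\vp^k}$. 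The case $\lambda = 0$ is slightly degenerate for $\ub_c$ when $\varpi=0$, but the invertibility of $\mathcal{L}_k$ (which includes the modification term $(\ub_c,\mathbf{v}_c)_c$) still forces $\ub_c=0$ at $\lambda=0$, so the bound remains uniform.

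With uniform a priori bound $R$ in place, the equation $\mathcal{H}(\mathbf{w},\lambda)=\mathbf{0}$ has no solution on $\partial B_{R+1}(0) \subset \mathbf{X}$ for any $\lambda\in[0,1]$, so the homotopy invariance of the Leray-Schauder degree yields $\deg(\mathcal{I}-\mathcal{N}_k, B_{R+1}(0), \mathbf{0}) = \deg(\mathcal{I}, B_{R+1}(0), \mathbf{0}) = 1$, producing at least one fixed point $\mathbf{w}\in B_{R+1}(0)$. The main technical obstacle I anticipate is the careful bookkeeping of the $\lambda$-factors in the modified energy identity to make sure every bad term is absorbed by a dissipative term on the left-hand side; the convex-splitting inequality \eqref{eek2} and the coercivity provided by Korn's inequality \eqref{korn} together with the extra term $(\ub_c,\mathbf{v}_c)_c$ built into $\mathcal{L}_k$ are precisely what allow this absorption to succeed uniformly in $\lambda$.
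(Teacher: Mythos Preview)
Your single homotopy $\mathcal{H}(\mathbf{w},\lambda)=\mathbf{w}-\lambda\mathcal{N}_k(\mathbf{w})$ is the natural first guess, but the paper deliberately avoids it and instead uses a \emph{two-stage} homotopy: for $s\in[0,1]$ it deforms the \emph{data} by setting $\ub_c^k(s)=(1-s)\ub_c^k$, $\vp^k(s)=(1-s)\vp^k$ (leaving the operator structure untouched, so the discrete energy law \eqref{DisEnLaw} applies verbatim and yields uniform bounds), and only for $s\in[1,2]$, \emph{after} the data have been set to zero, does it scale $\mathcal{G}_k$ to reach the identity. The reason for this detour is precisely the point you gloss over.

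When you scale $\mathcal{G}_k$ directly, the cancellation you invoke does \emph{not} survive. The matrix velocity $\ub_m$ that appears in the advective term of $\mathcal{G}_k$ is defined (inside $\mathcal{G}_k$, hence independently of $\lambda$) by \eqref{app3} as $\ub_m=-\frac{\Pi}{\nu(\vp_m^k)}\big(\nabla P_m-(\mu_0+\overline{S}_k)\nabla\vp_m\big)$. But taking $\mathbf{v}_c=0$ in the $\lambda$-scaled equation $\mathcal{L}_k(\ub_c,P_m)=\lambda\mathcal{J}_k(\mathbf{w})$ yields $\big(\frac{\Pi}{\nu}[\nabla P_m-\lambda(\mu_0+\overline{S}_k)\nabla\vp_m],\nabla q_m\big)_m=\int_{\Gamma_{cm}}(\ub_c\cdot\mathbf{n}_{cm})q_m$, so for $\lambda<1$ the field $\ub_m$ is \emph{not} divergence-free and does not satisfy $\ub_m\cdot\mathbf{n}_{cm}=\ub_c\cdot\mathbf{n}_{cm}$. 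Consequently the pairing $(\ub\cdot\nabla\vp,\mu)$ does not match the forcing term you obtain from testing the momentum/Darcy block with $(\ub_c,P_m)$; the claimed cancellation ``since both terms carry the same factor $\lambda$'' fails in $\Omega_m$. Likewise, $\mathcal{S}_k(\psi)=\lambda(\mu_0+\epsilon^{-1}(\vp^k-\overline{\vp^k}))$ turns the convex-splitting combination $\vp^3-\vp^k$ into $\vp^3-\lambda\vp^k$, so inequality \eqref{eek2} is not directly applicable either.

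These obstructions may be surmountable with extra work (absorbing the residual $\Omega_m$ term at the cost of a $(1-\lambda)$-dependent constant, replacing \eqref{eek2} by a cruder Young-type bound), but your sketch does not do this, and in particular uniformity of the bound as $\lambda\uparrow 1$ is not clear. The paper's two-stage construction sidesteps the issue entirely: in stage~1 the operator is unchanged so the exact energy inequality \eqref{DisEnLawa} applies, and in stage~2 the zero data make the perturbed system reduce to a coercive identity forcing $\mathbf{w}=\mathbf{0}$.
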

\begin{proof}
For $s\in [0,1]$, we define
$$  \ub_c^k(s)=(1-s)\ub_c^k,\qquad  \varphi^k(s)=(1-s)\varphi^k.$$
Replace $\ub_c^k$, $\varphi^k$ in the system \eqref{wapp1}--\eqref{wapp2b} by $\ub_c^k(s)$, $\varphi^k(s)$, respectively. Then we denote by $\mathcal{T}_k^{(s)}$, $\mathcal{G}_k^{(s)}$ the corresponding operators under the above transformation. In particular, $\mathcal{T}_k^{(0)}=\mathcal{T}_k$, $\mathcal{G}_k^{(0)}=\mathcal{G}_k$.
It is easy to see that $\mathcal{T}_k^{(s)}$, $\mathcal{G}_k^{(s)}$ have the same properties as in Lemmas \ref{LTG}--\ref{LGk}. Then we denote by $\mathcal{N}_k^{(s)}=(\mathcal{T}_k^{(s)})^{-1}\mathcal{G}_k^{(s)}$, which is a compact operator. Moreover, $\mathcal{N}_k^{(0)}=\mathcal{N}_k$.

In analogy  to \eqref{DisEnLaw}, we can derive the following discrete energy law with respect to the parameter $s$:
\begin{eqnarray}\label{DisEnLawa}
&&
  \mathcal{E}(\ub_c,\varphi) +\delta \left( \nu(\varphi_m^k(s))\Pi^{-1}\ub_m, \ub_m\right)_m
  +2\delta \left(\nu(\vp_c^k(s))\mathbb{D}(\ub_c), \mathbb{D}(\ub_c)\right)_c
\non\\
&& +\delta\int_\Omega \mathrm{M}(\vp^k(s))|\nabla\mu|^2 dx+\frac{\delta \alpha_{BJSJ}}{\sqrt{{\rm trace}(\Pi)}}\sum_{i=1}^{d-1}\int_{\Gamma_{cm}} \nu(\vp^k_m(s))|\ub_c\cdot\btau_i|^2 dS
  \non\\
  &&+\frac{\varpi}{2}\left(\ub_c-\ub_c^k(s) ,\ub_c-\ub_c^k(s)\right)_c +\frac{\epsilon}{2}\|\nabla (\varphi-\varphi^k(s))\|_{\mathbf{L}^2(\Omega)}^2\non\\
  &&+ \frac{1}{2\epsilon}\|\vp-\vp^k(s)\|_{L^2(\Omega)}^2\non\\
&\le& \mathcal{E}(\ub_c^k(s),\varphi^k(s)).
\end{eqnarray}

For any given $\ub_c^k\in \mathbf{L}^2(\Omega_c)$ and $\vp^k\in H^1(\Omega)$, there exists a constant $R>0$ depending only on $\|\ub_c^k\|_{\mathbf{L}^2(\Omega_c)}$, $\|\vp^k\|_{H^1(\Omega)}$, $\varpi$, $\epsilon$  and $\Omega$ such that $\mathcal{E}(\ub_c^k(s),\varphi^k(s))\leq R$ for all $s\in [0,1]$. By the energy estimate \eqref{DisEnLawa}, there exists $C_0>0$ depending on $R$ and coefficients of the system but independent of $s$ such that
the solution $\mathbf{w}=\mathbf{w}^{(s)}$ to the equation $\mathcal{T}_k^{(s)}(\mathbf{w})=\mathcal{G}_k^{(s)}(\mathbf{w})$, if it exists, will satisfy $$\|\mathbf{w}^{(s)}\|_{\mathbf{X}}\leq C_0,\quad \forall\, s\in [0,1].$$
 Taking the ball in $\mathbf{X}$ centered at $\mathbf{0}$ with radius $2C_0$:
$$\mathbf{B}=\{ \mathbf{w}\in \mathbf{X}:\ \|\mathbf{w}\|_{\mathbf{X}}\leq 2C_0\},$$
 we infer from the above \textit{a priori} estimate that for all $s\in [0,1]$, $(\mathcal{I}-\mathcal{N}_k^{(s)})(\mathbf{w})\neq \mathbf{0}$ for any $\mathbf{w}\in \partial \mathbf{B}$. Therefore,  the Leray-Schauder degree of the operator $\mathcal{I}-\mathcal{N}_k^{(s)}$ at $\mathbf{0}$ in the ball $\mathbf{B}$, denoted by $\mathrm{deg}(\mathcal{I}-\mathcal{N}_k^{(s)}, \mathbf{B}, \mathbf{0})$, is well-defined for $s\in [0,1]$.

On the other hand, since $\mathcal{N}_k^{(0)}=\mathcal{N}_k$, then by the homotopy invariance of the Leray-Schauder degree, we have
\be
\mathrm{deg}(\mathcal{I}-\mathcal{N}_k, \mathbf{B}, \mathbf{0})=\mathrm{deg}(\mathcal{I}-\mathcal{N}_k^{(0)}, \mathbf{B}, \mathbf{0})=\mathrm{deg}(\mathcal{I}-\mathcal{N}_k^{(1)}, \mathbf{B}, \mathbf{0}).\label{degg1}
\ee

Next, we shall prove that $\mathrm{deg}(\mathcal{I}-\mathcal{N}_k^{(1)}, \mathbf{B}, \mathbf{0})=1$. For this purpose, we define a further homotopy for $s\in [1,2]$ such that
\be
\mathcal{N}_k^{(s)}(\mathbf{w})=\left(\mathcal{T}_k^{(1)}\right)^{-1}\left[(2-s)\mathcal{G}_k^{(1)}(\mathbf{w})\right], \quad \forall\, \mathbf{w}\in \mathbf{X}.
\ee
For $s\in [1, 2)$, $(\mathcal{I}-\mathcal{N}_k^{(s)})(\mathbf{w})=\mathbf{0}$ if and only if for  $\mathbf{w}=(\ub_c, P_m, \mu_0, \psi, \overline{S}_k)\in \mathbf{X}$, the vector $(\ub_c, P_m, \vp, \mu)$  with $\vp=\psi$, $\mu=\mu_0+\overline{S}_k (2-s)^{-2}$  is a solution of the following system
\begin{eqnarray}
&&\frac{\varpi (2-s)}{\delta}\left(\ub_c ,\mathbf{v}_c\right)_c+2\left(\nu(0)\mathbb{D}(\ub_c),\mathbb{D}(\mathbf{v}_c)\right)_c
\non\\
&& +(s-1)(\ub_c, \mathbf{v}_c)_c+\left(\frac{\Pi}{\nu(0)}\nabla P_m,\nabla q_m\right)_m\non\\
 && +\sum_{i=1}^{d-1}\frac{\alpha_{BJSJ}}{\sqrt{{\rm trace} (\Pi)}} \int_{\Gamma_{cm}}\nu(0)(\ub_c\cdot\btau_i)(\mathbf{v}_c\cdot\btau_i)dS\non\\
&& +\int_{\Gamma_{cm}} P_m (\mathbf{v}_c\cdot \mathbf{n}_{cm}) dS-\int_{\Gamma_{cm}} (\ub_c\cdot \mathbf{n}_{cm})q_m dS \non\\
&=&(2-s)(\mu_c\nabla\varphi_c,\mathbf{v}_c)_c+(2-s)\left(\frac{\Pi}{\nu(0)}\mu_m\nabla\varphi_m,\nabla q_m\right)_m,\label{app1q}
 \end{eqnarray}
\be
\frac{2-s}{\delta}\left(\varphi,\phi\right) + (2-s)(\ub\cdot \nabla \vp ,\phi)=- ({\rm M}(0)\nabla \mu,\nabla\phi),\label{app2aq}
\ee
\be
(2-s)(\mu,\phi)=\frac{1}{\epsilon}\left(\varphi^3,\phi\right) +\epsilon(\nabla\varphi,\nabla\phi), \label{app2bq}
\ee
for any $q_m \in X_m$,  $\mathbf{v}_c\in \mathbf{H}_{c,{\rm div}}$, $\phi\in H^1(\Omega)$, and $\ub_m$ is given by
\be
\ub_m=-\frac{\Pi}{\nu(0)}\left[\nabla P_m-\mu(\varphi_m)\nabla\varphi_m\right].\label{app3q}
 \ee
%
Taking the testing functions $\mathbf{v}_c=\ub_c$, $q_m =P_m$ in \eqref{app1q}, $\phi=\mu$ in \eqref{app2aq} and $\phi=\vp$ in \eqref{app2bq}, summing up, we obtain that
\begin{eqnarray}
&&\frac{\varpi (2-s)}{\delta}\left(\ub_c ,\mathbf{u}_c\right)_c+\frac{\epsilon}{\delta}(\nabla\varphi,\nabla\varphi)+\frac{1}{\delta\epsilon}\int_\Omega  \varphi^4dx \non\\
&& +2\left(\nu(0)\mathbb{D}(\ub_c),\mathbb{D}(\mathbf{u}_c)\right)_c+(s-1)(\ub_c, \mathbf{u}_c)_c\non\\
&& +\left(\frac{\Pi}{\nu(0)}\nabla P_m,\nabla P_m\right)_m+\sum_{i=1}^{d-1}\frac{\alpha_{BJSJ}}{\sqrt{{\rm trace} (\Pi)}} \int_{\Gamma_{cm}}\nu(0)|\ub_c\cdot\btau_i|^2 dS\non\\
&&+ ({\rm M}(0)\nabla \mu,\nabla\mu)
\non\\
&=&0.
 \end{eqnarray}
 The above estimate implies that for $s\in (1,2)$, $(\mathcal{I}-\mathcal{N}_k^{(s)})(\mathbf{w})=\mathbf{0}$  if and only if $\mathbf{w}=\mathbf{0}$. Moreover, it is straightforward to check that $\mathcal{I}-\mathcal{N}_k^{(2)}=\mathcal{I}$ (cf. Lemmas \ref{LTG}, \ref{LGk}, in particular, $\big(\mathcal{T}_k^{(1)}\big)^{-1}(\mathbf{0})=\mathbf{0}$) and thus $(\mathcal{I}-\mathcal{N}_k^{(2)})(\mathbf{w})=\mathbf{0}$ if and only if $\mathbf{w}=\mathbf{0}$. Thus, for $s\in [1,2]$, we have
  $(\mathcal{I}-\mathcal{N}_k^{(s)})(\mathbf{w})\neq \mathbf{0}$ for any $\mathbf{w}\in \partial \mathbf{B}$. As a consequence, the homotopy invariance of the Leray-Schauder degree yields that
   \be
   \mathrm{deg}(\mathcal{I}-\mathcal{N}_k^{(1)}, \mathbf{B}, \mathbf{0})=\mathrm{deg}(\mathcal{I}, \mathbf{B}, \mathbf{0})=1.\label{degg2}
   \ee

 In summary, we can conclude from \eqref{degg1} and \eqref{degg2} that $\mathrm{deg}(\mathcal{I}-\mathcal{N}_k, \mathbf{B}, \mathbf{0})=1$, which implies that the abstract equation \eqref{IaNK} admits a solution $\mathbf{w}=(\ub_c, P_m,  \mu_0, \psi, \overline{S}_k)\in \mathbf{X}$ that solves $\mathcal{T}_k(\mathbf{w})=\mathcal{G}_k(\mathbf{w})$.

 The proof of Lemma \ref{degree} is complete.
\end{proof}

Finally, we can conclude the existence of weak solutions to the system \eqref{app1}--\eqref{app2b} from Lemmas \ref{LMC}, \ref{DEE}, \ref{LSk}, \ref{degree} and Proposition \ref{Abequi},
\begin{theorem}[Existence of solutions to the discrete problem]\label{DEEl}
For every $\ub_c^k\in \mathbf{L}^2(\Omega_c)$ 
and $\vp^k\in H^1(\Omega)$,  there exists a weak solution $\{\ub_c, \ub_m, P_m, \varphi, \mu\}$ to the nonlinear discrete problem \eqref{app1}--\eqref{app3} such that
$$
 \ub_c \in \mathbf{H}_{c,{\rm div}}, \quad \mathbf{u}_m \in \mathbf{H}_{m, \mathrm{div}}, \quad P_m\in X_m, \quad \varphi \in H^3(\Omega), \quad \mu \in H^1(\Omega).
$$
Moreover, the solution satisfies the mass-conservation property \eqref{MC} and the energy-dissipation inequality \eqref{DisEnLaw}.
\end{theorem}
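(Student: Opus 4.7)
The plan is to assemble the theorem from the machinery already developed, since all the substantive analytic work has been absorbed into the preceding lemmas. First I would invoke Lemma \ref{degree} to obtain a vector $\mathbf{w} = (\ub_c, P_m, \mu_0, \psi, \overline{S}_k) \in \mathbf{X}$ satisfying the abstract equation $\mathcal{T}_k(\mathbf{w}) = \mathcal{G}_k(\mathbf{w})$. Applying the equivalence result in Proposition \ref{Abequi}, I recover the tuple $(\ub_c, P_m, \varphi, \mu)$ with $\varphi = \psi + \overline{\vp^k}$ and $\mu = \mu_0 + \overline{S}_k$ that weakly solves the coupled discrete system \eqref{app1}--\eqref{app2b}, and I define $\ub_m$ via \eqref{app3}.

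Next I would verify each of the regularity claims in the statement. The memberships $\ub_c \in \mathbf{H}_{c,\mathrm{div}}$, $P_m \in X_m$, and $\mu_0 \in \dot{H}^1(\Omega)$ (hence $\mu \in H^1(\Omega)$ after restoring the mean) follow directly from the definition of the space $\mathbf{X}$ in \eqref{XY}. The claim $\varphi \in H^3(\Omega)$ requires a small upgrade: the third component of the abstract equation reads $\mathcal{S}_k(\psi) = \mu_0 + \epsilon^{-1}(\vp^k - \overline{\vp^k})$, whose right-hand side lies in $\dot{H}^1(\Omega)$ (since $\vp^k \in H^1(\Omega)$), so the boosted regularity clause of Lemma \ref{LSk} yields $\psi \in \dot{H}^3(\Omega)$, and consequently $\varphi \in H^3(\Omega)$. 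The fact that $\ub_m \in \mathbf{H}_{m,\mathrm{div}}$ with matching normal trace $\ub_m \cdot \mathbf{n}_{cm} = \ub_c \cdot \mathbf{n}_{cm}$ on $\Gamma_{cm}$ is supplied by Lemma \ref{LMC}.

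Finally, the mass-conservation property \eqref{MC} is an immediate consequence of Lemma \ref{LMC}, and the discrete energy inequality \eqref{DisEnLaw} follows from Lemma \ref{DEE}. Both lemmas apply because the solution just constructed has precisely the regularity their hypotheses demand, namely $\{\ub_c, P_m, \varphi, \mu\} \in \mathbf{H}_{c,\mathrm{div}} \times X_m \times H^3(\Omega) \times H^1(\Omega)$.

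I do not anticipate any serious obstacle at this stage; all the analytic weight has already been invested in the Leray-Schauder machinery of Lemma \ref{degree} and the elliptic boosting in Lemma \ref{LSk}. The only detail to watch is bookkeeping the averages $\overline{\vp^k}$ and $\overline{S}_k$ when translating from the projected abstract unknowns back to the physical variables $\varphi$ and $\mu$, so that the recovered chemical potential is properly identified as an element of $H^1(\Omega)$ rather than merely as an element of $\dot{H}^1(\Omega)$ shifted by a constant.
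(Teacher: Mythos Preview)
Your proposal is correct and follows exactly the same approach as the paper, which simply states that the theorem follows from Lemmas \ref{LMC}, \ref{DEE}, \ref{LSk}, \ref{degree} and Proposition \ref{Abequi}. Your write-up in fact spells out the $H^3$ upgrade via Lemma \ref{LSk} and the bookkeeping of averages more explicitly than the paper itself does.
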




\subsection{Construction of approximate solution and passage to limit}
The existence of weak solutions to the time-discrete system \eqref{app1}--\eqref{app3}  enables us to construct approximate solutions to the time-continuous system \eqref{weak1}--\eqref{weak4}. Recall that $\delta= \frac{T}{N}$, where $T>0$ and $N$ is an positive integer. We set $$t_k=k \delta, \quad k=0, 1,\cdots, N.$$
Let $\{\ub_c^{k+1}, P_m^{k+1}, \varphi^{k+1}, \mu^{k+1}\}$ ($k=0, 1,\cdots, N-1$) be chosen successively as a solution of the discretized problem \eqref{app1}--\eqref{app3} with $(\ub_c^{k}, \vp^{k})$ being the ``initial value" (cf. Theorem \ref{DEEl}). In particular, $(\ub_c^{0}, \vp^{0})=(\ub_0, \vp_0)$.
Then for $k=0,1, \cdots, N-1$, we define the approximate solutions as follows
\begin{align*}
&\varphi^\delta :=\frac{t_{k+1}-t}{\delta}\varphi^{k}+\frac{t-t_k}{\delta}\varphi^{k+1}, \quad \text{for}\ t \in [t_k, t_{k+1}],  \\
&\mathbf{u}^\delta_c:= \frac{t_{k+1}-t}{\delta}\ub_c^{k}+\frac{t-t_k}{\delta}\ub_c^{k+1}, \quad \text{for}\ t \in [t_k, t_{k+1}],\\
&\widehat{P}_m^\delta:=  P_m^{k+1},\quad \text{for}\ t \in (t_k, t_{k+1}],\\
& \widehat{\ub}_m^\delta := -\frac{\Pi}{\nu(\varphi_m^k)}\left(\nabla P_m^{k+1}-\mu^{k+1}\nabla\varphi_m^{k+1}\right),\quad \text{for}\  t \in (t_k, t_{k+1}],\\
& \widehat{\ub}_c^\delta: =\ub_c^{k+1},\quad \text{for}\  t \in (t_k, t_{k+1}],\\
& \widehat{\mathbf{u}}^\delta|_{\Omega_c}= \widehat{\ub}_c^\delta, \quad \widehat{\mathbf{u}}^\delta|_{\Omega_m}=\widehat{\mathbf{u}}^\delta_m,\quad \text{for}\  t \in (t_k, t_{k+1}],\\
&\widehat{\varphi}^\delta := \varphi^{k+1},  \quad \text{for}\  t \in (t_k, t_{k+1}],  \\
&\widetilde{\varphi}^\delta := \varphi^{k}, \qquad \text{for}\  t \in [t_k, t_{k+1}), \\
&\widehat{\mu}^\delta: = \mu^{k+1},  \quad \text{for}\  t \in (t_k, t_{k+1}].
\end{align*}
\begin{remark}
It follows from the above definitions that $\varphi^\delta$, $\mathbf{u}^\delta_c$ are continuous piecewise linear functions in time, while
 $\widehat{\ub}_c^\delta$, $\widehat{P}_m^\delta$, $\widehat{\varphi}^\delta$, $\widehat{\mu}^\delta$  are piecewise constant (in time) functions being right continuous at the nodes
$\{t_{k+1}\}$ and  $\widetilde{\varphi}^\delta$ is left continuous at the nodes $\{t_k\}$.
\end{remark}

Using the above definition of approximate solutions, one can derive from the discrete problem \eqref{app1}--\eqref{app3} that the following identities hold:
\begin{eqnarray}
&& \varpi\int_0^T \left(\partial_t \ub_c^\delta ,\mathbf{v}_c\right)_c dt+2\int_0^T \left(\nu(\widetilde{\vp}_c^\delta)\mathbb{D}(\widehat{\ub}_c^\delta),\mathbb{D}(\mathbf{v}_c)\right)_cdt \non\\
&&+\int_0^T\left(\frac{\Pi}{\nu(\widetilde{\varphi}_m^\delta)}\left(\nabla \widehat{P}_m^\delta-\widehat{\mu}_m^\delta \nabla \widehat{\varphi}_m^\delta\right),\nabla q_m\right)_mdt
\non\\
&& +\sum_{i=1}^{d-1}\frac{\alpha_{BJSJ}}{\sqrt{{\rm trace} (\Pi)}} \int_0^T \int_{\Gamma_{cm}}\nu(\widetilde{\vp}_m^\delta)(\widehat{\ub}_c^\delta \cdot\btau_i)(\mathbf{v}_c\cdot\btau_i)dSdt \non\\
&& + \int_0^T
\int_{\Gamma_{cm}} \widehat{P}_m^\delta (\mathbf{v}_c\cdot \mathbf{n}_{cm}) dS dt-\int_0^T \int_{\Gamma_{cm}} (\widehat{\ub}_c^\delta \cdot \mathbf{n}_{cm})q_m dSdt  \non\\
&=& \int_0^T (\widehat{\mu}_c^\delta \nabla \widehat{\varphi}_c^\delta,\mathbf{v}_c)_c dt,\label{app1z}
 \end{eqnarray}
\begin{equation}
\int_0^T \left(\partial_t \varphi^\delta,\phi\right)dt  - \int_0^T(\widehat{\ub}^\delta  \widehat{\vp}^\delta , \nabla\phi)dt =- \int_0^T({\rm M}(\widetilde{\vp}^\delta)\nabla \widehat{\mu}^\delta,\nabla\phi) dt,\label{app2az}
\end{equation}
\begin{equation}
\int_0^T(\widehat{\mu}^\delta, \phi)dt =\frac{1}{\epsilon}\int_0^T\left(\widetilde{f}(\widehat{\varphi}^\delta,\widetilde{\varphi}^\delta),\phi\right) dt +\epsilon\int_0^T(\nabla\widehat{\varphi}^\delta,\nabla \phi)dt, \label{app2bz}
\end{equation}
\be
\int_0^T(\widehat{\ub}_m^\delta, \mathbf{v}_m)_m  dt =-\int_0^T\left(\frac{\Pi}{\nu(\widetilde{\varphi}_m^\delta)}\Big(\nabla \widehat{P}_m^{\delta}-\widehat{\mu}_m^{\delta}\nabla\widehat{\varphi}_m^{\delta}\Big),\mathbf{v}_m\right)_m dt.\label{app3z}
 \ee
for any $\mathbf{v}_c\in C_0^\infty([0,T]; \mathbf{H}_{c,{\rm div}})$, $q_m \in C^\infty([0,T]; X_m)$, $\phi\in C_0^\infty([0,T];H^1(\Omega))$ and $\mathbf{v}_m \in C^\infty([0,T]; \mathbf{L}^2(\Omega_m))$.

Besides, let $\mathcal{E}^\delta(t)$ be the piecewise linear interpolation of the discrete energy $\mathcal{E}(\ub_c^{k}, \vp^k)$ at $t_k$ such that
\be \mathcal{E}^\delta(t)=\frac{t_{k+1}-t}{\delta}\mathcal{E}(\ub_c^{k}, \vp^k)+\frac{t-t_k}{\delta}\mathcal{E}(\ub_c^{k+1}, \vp^{k+1}), \quad \text{for}\ t \in [t_k, t_{k+1}],
\ee
and $\mathcal{D}^\delta(t)$ be the interpolated approximate dissipation
\bea
\mathcal{D}^\delta(t)&=& 2 \left(\nu(\vp_c^k)\mathbb{D}(\ub_c^{k+1}), \mathbb{D}(\ub_c^{k+1})\right)_c+ \left( \nu(\varphi_m^k)\Pi^{-1}\ub_m^{k+1}, \ub_m^{k+1}\right)_m\non\\
 && +\int_\Omega \mathrm{M}(\vp^k)|\nabla\mu^{k+1}|^2 dx\non\\
 && +\frac{\alpha_{BJSJ}}{\sqrt{{\rm trace}(\Pi)}}\sum_{i=1}^{d-1}\int_{\Gamma_{cm}} \nu(\vp^k_m)|\ub_c^{k+1}\cdot\btau_i|^2 dS, \quad \text{for}\ t \in (t_k, t_{k+1}),
  \non
\eea
Then it follows from the discrete energy estimate \eqref{DisEnLaw} that for $k=0,1, \cdots, N-1$
\be
\frac{d}{dt}\mathcal{E}^\delta(t)=\frac{1}{\delta}(\mathcal{E}(\ub_c^{k+1}, \vp^{k+1})-\mathcal{E}(\ub_c^{k}, \vp^k))\leq - \mathcal{D}^\delta(t),\quad \text{for}\ t \in (t_k, t_{k+1}).\label{bela}
\ee
In particular, we have for $t\in [0,T]$,
\be
\mathcal{E}(\widehat{\ub}_c^{\delta}(t), \widehat{\vp}^{\delta}(t))+ \int_0^t \mathcal{D}^\delta(t) dt \leq \mathcal{E}(\ub_0, \vp_0),\quad \forall\, t\in [0,T].\label{ee1}
\ee

\subsection{Proof of Theorem \ref{thmEx}}
We now proceed to prove our main result Theorem \ref{thmEx} on the existence of finite energy weak solutions to system \eqref{weak1}--\eqref{weak4}. To this end, we shall distinguish the two cases such that $\varpi>0$ and $\varpi=0$.
\subsubsection{Case $\varpi>0$}
In order to pass to the limit as $\delta\to 0$, we first derive some \textit{a priori} estimates on the approximate solutions that are uniform in $\delta$. First, recall the mass-conservation from Lemma \ref{LMC} $$\int_\Omega (\vp^{k+1}-\vp^{k} )dx= 0, \quad \text{for}\  k=0,...,N-1,$$
which immediately yields
\be
\int_\Omega \vp^\delta dx=\int_\Omega \widehat{\vp}^\delta dx=\int_\Omega \widetilde{\vp}^\delta dx=\int_\Omega \vp_0 dx.\non
\ee
Besides, it follows from the energy estimate \eqref{ee1} that
\begin{align}
&\varpi\|\widehat{\mathbf{u}}^\delta_c\|_{L^\infty(0,T;\mathbf{L}^2(\Omega_c))}+\|\widehat{\varphi}^\delta\|_{L^\infty(0,T;H^1(\Omega))} \leq C,\label{con1} \\
&\|\mathbb{D}(\widehat{\mathbf{u}}^\delta_c)\|_{L^2(0,T;\mathbf{L}^2(\Omega_c))}+\sum_{i=1}^{d-1}\|\widehat{\ub}_c^\delta\cdot\btau_i\|_{L^2(0,T; L^2(\Gamma_{cm}))}\leq C, \\
&\|\widehat{\mathbf{u}}_m^\delta\|_{L^2(0,T;\mathbf{L}^2(\Omega_m))}\leq C, \\
&\|\nabla \widehat{\mu}^\delta\|_{L^2(0,T;\mathbf{L}^2(\Omega))} \leq C,\label{mues1}
\end{align}
where the constant $C$ depends on $\mathcal{E}(\ub_0, \vp_0)$ and $\Omega$ but is independent of $\delta$.
Taking $\phi=1$ in \eqref{app2b}, we have for $k=0,1,...,N-1$
\be
\left|\int_\Omega \mu^{k+1} dx\right|\leq \epsilon^{-1}\int_\Omega (|\vp^{k+1}|^3+|\vp^k|) dx\leq C,\non
\ee
which combined with the Poincar\'e inequality and \eqref{mues1} implies that
$$\|\widehat{\mu}^\delta\|_{L^2(0,T;H^1(\Omega))} \leq C_T,$$
where the constant $C_T$ depends on $\mathcal{E}(\ub_0, \vp_0)$, $\Omega$ and $T$. Then similar to the Neumann problem \eqref{p2ndStr}, we can apply  the elliptic estimate (similar to \eqref{p2ndLapH3}) to get
\be
\|\widehat{\varphi}^\delta\|_{L^2(0,T;H^3(\Omega))} \leq C_T.\label{esphil2h3}
\ee
Using \eqref{app3}, the above estimates, the H\"{o}lder inequality and the Gagliardo-Nirenberg inequality, we can obtain the following estimates for $\widehat{P}_m$ such that when $d=3$
\bea
&& \int_0^T \|\nabla \widehat{P}_m^\delta\|_{\mathbf{L}^2(\Omega_m)}^\frac{8}{5} dt\non\\
& \leq& C\int_0^T \Big(\|\widehat{\mathbf{u}}_m^\delta\|_{\mathbf{L}^2(\Omega_m)}^\frac{8}{5} + \|\nabla \widehat{\vp}_m^\delta\|_{\mathbf{L}^3(\Omega_m)}^\frac{8}{5}\| \widehat{\mu}_m^\delta\|_{L^6(\Omega_m)}^\frac{8}{5}\Big) dt\non\\
&\leq& C\int_0^T (\|\widehat{\mathbf{u}}_m^\delta\|_{\mathbf{L}^2(\Omega_m)}^2+1)dt+ C \sup_{0\leq t\leq T} \|\widehat{\vp}_m^\delta\|_{H^1(\Omega_m)}^\frac65 \int_0^T  \|\widehat{\vp}_m^\delta\|_{H^3(\Omega_m)}^\frac{2}{5}\| \widehat{\mu}_m^\delta\|_{H^1(\Omega_m)}^\frac{8}{5} dt\non\\
&\leq& C\int_0^T (\|\widehat{\mathbf{u}}_m^\delta\|_{\mathbf{L}^2(\Omega_m)}^2+1)dt
\non\\
&& + C \sup_{0\leq t\leq T} \|\widehat{\vp}_m^\delta\|_{H^1(\Omega_m)}^\frac{6}{5} \Big(\int_0^T  \|\widehat{\vp}_m^\delta\|_{H^3(\Omega_m)}^2 dt\Big)^{\frac{1}{5}}\Big(\int_0^T \| \widehat{\mu}_m^\delta\|_{H^1(\Omega_m)}^2 dt\Big)^{\frac{4}{5}}\non\\
&\leq& C_T, \label{pm}
\eea
and when $d=2$
\bea
&& \int_0^T \|\nabla \widehat{P}_m^\delta\|_{\mathbf{L}^2(\Omega_m)}^\frac{2r}{1+r} dt\non\\
& \leq& C\int_0^T \Big(\|\widehat{\mathbf{u}}_m^\delta\|_{\mathbf{L}^2(\Omega_m)}^\frac{2r}{1+r} + \|\nabla \widehat{\vp}_m^\delta\|_{\mathbf{L}^\frac{2r}{r-2}(\Omega_m)}^\frac{2r}{1+r}\| \widehat{\mu}_m^\delta\|_{L^r(\Omega_m)}^\frac{2r}{1+r}\Big) dt\non\\
&\leq&
 C\int_0^T (\|\widehat{\mathbf{u}}_m^\delta\|_{\mathbf{L}^2(\Omega_m)}^2+1)dt+
 C \sup_{0\leq t\leq T} \|\widehat{\vp}_m^\delta\|_{H^1(\Omega_m)}^\frac{2(r-1)}{1+r}  \int_0^T  \|\widehat{\vp}_m^\delta\|_{H^3(\Omega_m)}^\frac{2}{1+r}\| \widehat{\mu}_m^\delta\|_{H^1(\Omega_m)}^\frac{2r}{1+r} dt\non\\
&\leq& C\int_0^T (\|\widehat{\mathbf{u}}_m^\delta\|_{\mathbf{L}^2(\Omega_m)}^2+1)dt
\non\\
&& + C \sup_{0\leq t\leq T} \|\widehat{\vp}_m^\delta\|_{H^1(\Omega_m)}^\frac{2(r-1)}{1+r}  \Big(\int_0^T  \|\widehat{\vp}_m^\delta\|_{H^3(\Omega_m)}^2 dt\Big)^{\frac{1}{1+r}}\Big(\int_0^T \| \widehat{\mu}_m^\delta\|_{H^1(\Omega_m)}^2 dt\Big)^{\frac{r}{1+r}}\non\\
&\leq& C_T, \quad \text{for any}\ r\in (2, +\infty).\label{pmd2}
\eea

Based on the above estimates \eqref{con1}--\eqref{pmd2} which are independent of $\delta$, we can see that there exists a subsequence $\{(\widehat{\ub}_c^\delta, \widehat{P}_m^\delta, \widehat{\vp}^\delta, \widehat{\mu}^\delta)\}$ (still denoted by the same symbols for simplicity) as $\delta \to 0$ (or equivalently $N\to +\infty$) such that
\be
\begin{cases}
\widehat{\mathbf{u}}_c^\delta \rightarrow \mathbf{u}_c & \text{ weakly star in } L^\infty(0,T; \mathbf{L}^2(\Omega_c)),\\
&\text{ weakly  in } L^2(0,T; \mathbf{H}^1(\Omega_c)),\\
\widehat{P}_m \to P_m &\text{ weakly  in } L^\alpha(0,T; X_m),\\
 \widehat{\mathbf{u}}_m^\delta \rightarrow  \mathbf{u}_m &\text{ weakly  in } L^2(0,T; \mathbf{L}^2(\Omega_m)),\\
\widehat{\varphi}^\delta \rightarrow \varphi & \text{ weakly star in } L^\infty(0,T; H^1(\Omega)), \\
& \text{ weakly  in } L^2(0,T; H^3(\Omega)), \\
\widehat{\mu}^\delta \rightarrow \mu &\text{ weakly in } L^2(0,T; H^1(\Omega)),
\end{cases}
\label{conwmu}
\ee
for certain functions $(\ub_c, P_m, \ub_m, \vp, \mu)$ satisfying
\begin{align*}
& \mathbf{u}_c \in L^\infty(0, T; \mathbf{L}^2(\Omega_c))\cap L^2(0, T; \mathbf{H}^1(\Omega_c)), \\
& P_m \in L^\alpha(0, T; X_m),  \\
& \ub_m \in L^2(0, T; \mathbf{L}^2(\Omega_m)),\\
& \vp\in L^\infty(0, T; H^1(\Omega))\cap L^2(0, T; H^3(\Omega)),\\
& \mu \in L^2(0,T; H^1(\Omega)),
\end{align*}
where $\alpha=\frac85$ when $d=3$ and $\alpha \in (\frac43,2)$ that can be arbitrary close to $2$ when $d=2$.

In order to pass to the limit in nonlinear terms, we need to show the strong convergence of  $\widehat{\vp}^\delta$ (up to a subsequence). It follows from  equation \eqref{app2az}, the Gagliardo-Nirenberg inequality and the Sobolev embedding theorem that
\bea
&& \|\partial_t \varphi^\delta\|^{\frac{8}{5}}_{L^{\frac{8}{5}}(0,T;(H^1(\Omega))^\prime)}\non\\
& \leq& C\int_0^T \left(\|\nabla \widehat{\mu}^\delta\|_{\mathbf{L}^2(\Omega)}^\frac{8}{5}
      + \|\widehat{\vp}^\delta\|_{L^\infty(\Omega)}^\frac{8}{5}\|\widehat{\ub}^\delta\|_{\mathbf{L}^2(\Omega)}^\frac{8}{5}\right) dt\non\\
&\leq& C\int_0^T \|\nabla \widehat{\mu}^\delta\|_{\mathbf{L}^2(\Omega)}^{\frac{8}{5}}dt
   + C \sup_{0\leq t\leq T} \|\widehat{\vp}^\delta\|_{L^6(\Omega)}^\frac{6}{5}
   \int_0^T  \|\widehat{\vp}^\delta\|_{H^3(\Omega)}^\frac{2}{5}\| \widehat{\ub}^\delta\|_{\mathbf{L}^2(\Omega)}^\frac{8}{5} dt\non\\
&\leq& C\int_0^T \Big(\|\nabla \widehat{\mu}^\delta\|_{\mathbf{L}^2(\Omega)}^2+1\Big)dt+ C \sup_{0\leq t\leq T} \|\widehat{\vp}^\delta\|_{H^1(\Omega)}^\frac{6}{5} \int_0^T  \Big(\|\widehat{\vp}^\delta\|_{H^3(\Omega)}^2+\| \widehat{\ub}^\delta\|_{\mathbf{L}^2(\Omega)}^2\Big) dt\non\\
&\leq& C_T, \quad \text{when} \ d=3.\label{vpt1}
\eea
For $d=2$, we use the Br\'{e}zis-Gallouet interpolation inequality (cf. \cite{BG})
$$ \|g\|_{L^\infty(\Omega)}\leq C\|g\|_{H^1(\Omega)}\sqrt{\ln (1+\|g\|_{H^2(\Omega)})} +C(1+\|g\|_{H^1(\Omega)}), \quad \forall\, g\in H^2(\Omega)$$
to obtain that for any $\alpha\in (1,2)$, it holds
\bea
&& \|\partial_t \varphi^\delta\|^{\alpha}_{L^{\alpha}(0,T;(H^1(\Omega))^\prime)}\non\\
& \leq& C\int_0^T \left(\|\nabla \widehat{\mu}^\delta\|_{\mathbf{L}^2(\Omega)}^\alpha
      + \|\widehat{\vp}^\delta\|_{L^\infty(\Omega)}^\alpha\|\widehat{\ub}^\delta\|_{\mathbf{L}^2(\Omega)}^\alpha\right) dt\non\\
&\leq& C\int_0^T \|\nabla \widehat{\mu}^\delta\|_{\mathbf{L}^2(\Omega)}^{\alpha}dt\non\\
&&
   + C(1+ \sup_{0\leq t\leq T} \|\widehat{\vp}^\delta\|_{H^1(\Omega)}^\alpha)
   \int_0^T  \Big(1+\sqrt{\ln (1+\|\vp\|_{H^2(\Omega)})} \Big)^\alpha\| \widehat{\ub}^\delta\|_{\mathbf{L}^2(\Omega)}^\alpha dt\non\\
&\leq& C\int_0^T \Big(\|\nabla \widehat{\mu}^\delta\|_{\mathbf{L}^2(\Omega)}^2+1\Big)dt\non\\
&& + C \int_0^T  \Big[\Big(1+\sqrt{\ln (1+\|\vp\|_{H^2(\Omega)})} \Big)^\frac{2\alpha}{2-\alpha}+\| \widehat{\ub}^\delta\|_{\mathbf{L}^2(\Omega)}^2\Big] dt\non\\
&\leq& C_T, \quad \text{when} \ d=2.\label{vpt1d2}
\eea
As a result, it follows that
\be
\partial_t \vp^\delta \to \partial_t \vp \quad \text{ weakly  in } L^{\alpha}(0,T; (H^1(\Omega))^\prime).\non
\ee
where $\alpha=\frac85$ when $d=3$ and $\alpha \in (1,2)$ that can be arbitrary close to $2$ when $d=2$.

 Since
$$\|\widehat{\vp}^\delta-\vp^\delta\|_{(H^1)'}=\left\|(t_{k+1}-t)\frac{(\vp^{k+1}-\vp^k)}{\delta}\right\|_{(H^1)'}\leq \delta \|\partial_t \vp^\delta\|_{(H^1)'}, \quad t\in (t_k, t_{k+1}],$$
for $k=0,1,...,N-1$, we have
\be
\int_0^T \|\widehat{\vp}^\delta-\vp^\delta\|_{(H^1)'} ^\alpha dt\leq \delta^\alpha \int_0^T \|\partial_t \vp^\delta\|_{(H^1)'}^\alpha dt\to 0, \quad \text{as}\ \delta \to 0,
\ee
which implies
\begin{align*}
\widehat{\vp}^\delta-\vp^\delta \to 0, \quad \text{strongly in}\ \ L^\alpha(0,T; (H^1)'), \text{ as } \delta \rightarrow 0.
\end{align*}
Similarly, one can show $\|\widetilde{\varphi}^\delta -\widehat{\varphi}^\delta\|_{L^\alpha(0,T; (H^1)')} \rightarrow 0$, as $\delta \rightarrow 0$. Thus, the sequences $\{\varphi^\delta\}$, $\{\widehat{\varphi}^\delta\}$ and $\{\widetilde{\phi}^\delta\}$, if convergent, should converge to the same limit. On the other hand, by the definition of $\vp^\delta$, it satisfies the estimates similar to \eqref{con1}, \eqref{esphil2h3} for $\widehat{\vp}^\delta$. Hence, applying the Simon's compactness lemma (cf. e.g., \cite{Si85}), we deduce that there exists $\vp^*\in L^2(0,T; H^{3-\beta}(\Omega))\cap C([0,T]; H^{1-\beta}(\Omega))$,  for a suitable subsequence,
$$ \vp^\delta \to \vp^*,  \quad \text{strongly in}\ L^2(0,T; H^\beta(\Omega)),\quad
\text{as}\ \delta\to 0,
$$
for certain $0< \beta \leq 1$. In particular, we have $\vp^*=\vp$ and up to a subsequence,
\be
\widehat{\vp}^\delta, \widetilde{\vp}^\delta \to \vp \quad \text{strongly in}\ \ L^2(0,T;  H^{3-\beta}(\Omega))\cap C([0,T]; H^{1-\beta}(\Omega)), \text{ as } \delta \rightarrow 0.\label{vpt}
\ee
Concerning the initial data, since by definition $\vp^\delta|_{t=0}=\vp_0$, we infer from \eqref{vpt}  that
$$\vp|_{t=0}= \vp_0.$$
 Indeed, by \eqref{con1}, \eqref{vpt1} and \cite[Lemma 4.1]{Ab09}, we also have $\vp\in C_w([0,T]; H^1(\Omega))$.

Using similar arguments for \eqref{pm} and \eqref{pmd2}, we can deduce from \eqref{app1z} and \eqref{pm} that (taking $q_m=0$)
\bea
 && \|\partial_t \ub_c^\delta\|^{\frac{8}{5}}_{L^{\frac{8}{5}}(0,T;(\mathbf{H}^1(\Omega))^\prime)}\non\\
&\leq& C\int_0^T \left(\|\widehat{\ub}_c^\delta\|_{\mathbf{H}^1(\Omega_c)}^\frac85+ \|\widehat{P}_m^\delta \|_{H^1(\Omega_m)}^\frac85+\|\widehat{\mu}_c^\delta\|_{L^6(\Omega_c)}^\frac85\| \nabla \widehat{\varphi}_c^\delta\|_{\mathbf{L}^3(\Omega_c)}^\frac85\right) dt\non\\
&\leq& C_T, \quad \text{when}\ d=3\label{ut}
\eea
and
\bea
 && \|\partial_t \ub_c^\delta\|^{\frac{2r}{1+r}}_{L^{\frac{2r}{1+r}}(0,T;(\mathbf{H}^1(\Omega))^\prime)}\non\\
&\leq& C\int_0^T \left(\|\widehat{\ub}_c^\delta\|_{\mathbf{H}^1(\Omega_c)}^{\frac{2r}{1+r}}+ \|\widehat{P}_m^\delta \|_{H^1(\Omega_m)}^{\frac{2r}{1+r}}+\|\widehat{\mu}_c^\delta\|_{L^r(\Omega_c)}^{\frac{2r}{1+r}}\| \nabla \widehat{\varphi}_c^\delta\|_{\mathbf{L}^\frac{2r}{r-2}(\Omega_c)}^{\frac{2r}{1+r}}\right) dt\non\\
&\leq& C_T, \quad \forall\, r\in (2+\infty), \ \text{when}\ d=2\label{ut2}
\eea
Parallel to the arguments for  $\vp^\delta$, $\widehat{\vp}^\delta$, the above estimates yield that as $\delta \rightarrow 0$,
\begin{align}
 \widehat{\ub}_c^\delta-\ub_c^\delta \to 0, \quad &\text{strongly in}\ \ L^\alpha(0,T; (\mathbf{H}^1(\Omega_c))'),  \\
 \widehat{\ub}_c^\delta, \ub_c^\delta \to \ub_c, \quad &\text{strongly in}\ \ L^2(0,T; \mathbf{H}^\beta(\Omega_c))\cap C([0,T]; \mathbf{H}^{-\beta}(\Omega_c)), \label{ubt}
\end{align}
for some $\beta\in (0,1)$, $\alpha=\frac85$ when $d=3$ and $\alpha \in (\frac43,2)$ that can be arbitrary close to $2$ when $d=2$.
 Moreover, we have $\ub_c|_{t=0}= \ub_0$ and $\ub_c\in C_w([0,T]; \mathbf{L}^2(\Omega_c))$.

Based on the strong convergence \eqref{vpt} and the Sobolev embedding theorem, we can derive that
\be
\widetilde{f}(\widehat{\varphi}^\delta,\widetilde{\varphi}^\delta) \to f(\vp),  \quad \text{strongly in}\ \ L^2(0, T; L^2(\Omega)).
\ee
By the assumptions (A1)--(A2), we get
\begin{align*}
\nu(\widetilde{\vp}^\delta)\to \nu(\vp), \quad &\text{strongly in}\ \ C([0,T]; H^{1-\beta}(\Omega)),\\
{\rm M}(\widetilde{\vp}^\delta)\to {\rm M}(\vp),\quad & \text{strongly in}\ \ C([0,T]; H^{1-\beta}(\Omega)).
\end{align*}
 Similar to the argument in \eqref{pm}, we have $\widehat{\mu}^\delta\nabla \widehat{\vp}^\delta\in L^\alpha(0,T; \mathbf{L}^2(\Omega))$ with $\alpha$ being the parameter specified above. Moreover, we infer from the strong convergence of $\widehat{\vp}^\delta$ (see \eqref{vpt}) and the weak convergence of $\widehat{\mu}^\delta$ (see \eqref{conwmu}) that
$$\widehat{\mu}^\delta \nabla \widehat{\varphi}^\delta\to \mu\nabla \vp$$ in the distribution sense. At last, we note that in \eqref{app1z}--\eqref{app2az}, after integration by parts, we get
\bea
&&\int_0^T\left(\partial_t \ub_c^\delta ,\mathbf{v}_c\right)_c dt =- \int_0^T\left( \ub_c^\delta ,\partial_t \mathbf{v}_c\right)_c dt,\non\\
&& \int_0^T \left(\partial_t \varphi^\delta,\phi\right)dt =-\int_0^T \left( \varphi^\delta,\partial_t \phi\right)dt.\non
\eea
 Using the above convergence results, we are able to pass to the limit in Eqs. \eqref{app1z}--\eqref{app3z} to see that the limit functions $(\ub_c, P_m, \ub_m, \vp, \mu)$ satisfy the weak formulation \eqref{weak1}--\eqref{weak4} (see Definition \ref{defweak}).

Finally, we show that $(\ub_c, \ub_m, \vp, \mu)$ also fulfills the energy inequality \eqref{Energyinq}. The energy estimate \eqref{bela} yields that
\be
\mathcal{E}(\ub_{0}, \vp_{0})h(0)+\int_0^T \mathcal{E}^\delta(t) h'(t) dt \geq \int_0^T \mathcal{D}^\delta(t) h(t) dt,\label{bel2}
\ee
for all $h(t)\in W^{1,1}(0,T)$ with $h\geq 0$ and $h(T)=0$. On the other hand, it follows from the strong convergence results \eqref{vpt} and \eqref{ubt} that as $\delta \to 0$, for almost every $t\in (0, T)$, we have (up to a subsequence),
\bea
&& \widehat{\ub}_c^\delta(t)\to \ub_c(t), \quad \text{strongly in}\ \ \mathbf{L}^2(\Omega_c),\non\\
&& \widehat{\vp}^\delta(t) \to \vp(t), \quad\ \text{strongly in}\ \ H^1(\Omega),\non
\eea
which imply that
\be
\mathcal{E}^\delta(t)\to \mathcal{E}(\ub_c(t), \vp(t)),\quad \text{for almost every} \ t\in (0, T).\non
\ee
By the lower semi-continuity of norms and the almost everywhere convergence of $\nu(\widetilde{\vp}^\delta)$, $\mathrm{M}(\widetilde{\vp}^\delta)$, we have
\be
\liminf_{\delta\to 0} \int_s^t \mathcal{D}^\delta(\tau) h(\tau)  d\tau \geq \int_s^t \mathcal{D}(\tau) h(\tau)  d\tau,\quad \text{for}\ 0\leq s<t\leq T,\non
\ee
where $\mathcal{D}(t)$ is defined as in \eqref{D}. Passing to the limit in \eqref{bel2}, we get
\be
\mathcal{E}(\ub_{0}, \vp_{0})h(0)+\int_0^T \mathcal{E}(\ub_c(t), \vp(t)) h'(t) dt \geq \int_0^T \mathcal{D}(t) h(t)  dt.\non
\ee
Then we can conclude from \cite[Lemma 4.3]{Ab09} that the energy inequality \eqref{Energyinq} holds for all $s\leq t<T$ and almost all $0\leq s<T$ including $s=0$.

\subsubsection{Case $\varpi=0$}
If $\varpi=0$, one does not have a direct estimate on $\|\widehat{\mathbf{u}}^\delta_c\|_{\mathbf{L}^2(\Omega_c)}$ (compare to \eqref{con1}). Recall also  that in our domain setting, the boundary  $\Gamma_c=\emptyset$ is allowed, i.e., $\Omega_c$ can be enclosed completely by $\Omega_m$. As a consequence, the classical Korn's inequality \eqref{kornsimp} does not apply. To overcome this difficulty, we shall derive an equivalent norm on the following space
$$
 \mathbf{Z}=\{\mathbf{u}\ |\ \ub_c=\ub|_{\Omega_c}\in \mathbf{H}_{c, \mathrm{div}},\ \ub_m=\ub|_{\Omega_m}\in \mathbf{H}_{m,\mathrm{div}}, \ \ub_c\cdot\mathbf{n}_{cm}=\ub_m\cdot\mathbf{n}_{cm} \ \text{on}\ \Gamma_{cm}\}.
$$

\begin{lemma}\label{equinorml}
The norm given by
\begin{align}\label{equinorm}
\|\mathbf{u}\|_{\mathbf{Z}}^2:=\|\mathbb{D}(\mathbf{u}_c)\|_{\mathbf{L}^2(\Omega_c)}^2
+\sum_{i=1}^{d-1}\|\mathbf{u}_c \cdot \mathbf{\btau}_i\|_{L^2(\Gamma_{cm})}^2+
\|\mathbf{u}_m\|_{\mathbf{L}^2(\Omega_m)}^2.
\end{align}
 is an equivalent norm on $\mathbf{Z}$.
\end{lemma}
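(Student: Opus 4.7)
\textbf{Proof proposal for Lemma \ref{equinorml}.} The plan is to show that the quantity in \eqref{equinorm} is equivalent to the natural norm
\[
\|\mathbf{u}\|_\natural^2:=\|\mathbf{u}_c\|_{\mathbf{H}^1(\Omega_c)}^2+\|\mathbf{u}_m\|_{\mathbf{L}^2(\Omega_m)}^2
\]
on $\mathbf{Z}$ (note that since elements of $\mathbf{Z}$ satisfy $\nabla\cdot\mathbf{u}_m=0$, the natural $\mathbf{H}(\mathrm{div})$-norm on the matrix part collapses to the $\mathbf{L}^2$-norm). The upper bound $\|\mathbf{u}\|_{\mathbf{Z}}\le C\|\mathbf{u}\|_\natural$ is routine: the inequality $|\mathbb{D}(\mathbf{u}_c)|\le|\nabla\mathbf{u}_c|$ handles the first term, the continuity of the trace operator $\mathbf{H}^1(\Omega_c)\hookrightarrow\mathbf{L}^2(\Gamma_{cm})$ handles the tangential boundary term, and the matrix term is already present.

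The content is the reverse inequality. Combining Korn's inequality \eqref{korn} with $\|\mathbf{u}_m\|_{\mathbf{L}^2(\Omega_m)}$ on the right-hand side, it suffices to prove a Poincaré-type bound
\begin{equation}\label{eq:poincare-plan}
\|\mathbf{u}_c\|_{\mathbf{L}^2(\Omega_c)}\le C\Big(\|\mathbb{D}(\mathbf{u}_c)\|_{\mathbf{L}^2(\Omega_c)}+\sum_{i=1}^{d-1}\|\mathbf{u}_c\cdot\btau_i\|_{L^2(\Gamma_{cm})}+\|\mathbf{u}_m\|_{\mathbf{L}^2(\Omega_m)}\Big)\qquad\forall\,\mathbf{u}\in\mathbf{Z}.
\end{equation}
I would establish \eqref{eq:poincare-plan} by a standard compactness/contradiction argument. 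Suppose it fails; then there is a sequence $\{\mathbf{u}^n\}\subset\mathbf{Z}$ with $\|\mathbf{u}_c^n\|_{\mathbf{L}^2(\Omega_c)}=1$ while the right-hand side tends to zero. By \eqref{korn}, $\{\mathbf{u}_c^n\}$ is bounded in $\mathbf{H}^1(\Omega_c)$, so up to a subsequence $\mathbf{u}_c^n\rightharpoonup\mathbf{u}_c^\ast$ weakly in $\mathbf{H}^1(\Omega_c)$ and strongly in $\mathbf{L}^2(\Omega_c)$, hence $\|\mathbf{u}_c^\ast\|_{\mathbf{L}^2(\Omega_c)}=1$. Weak lower-semicontinuity gives $\mathbb{D}(\mathbf{u}_c^\ast)=0$, so $\mathbf{u}_c^\ast$ is a rigid motion. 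Continuity of the trace yields $\mathbf{u}_c^n|_{\Gamma_{cm}}\to\mathbf{u}_c^\ast|_{\Gamma_{cm}}$ in $\mathbf{L}^2(\Gamma_{cm})$, whence $\mathbf{u}_c^\ast\cdot\btau_i=0$ on $\Gamma_{cm}$ for $i=1,\ldots,d-1$, and the boundary condition $\mathbf{u}_c^n=\mathbf{0}$ on $\Gamma_c$ passes to the limit.

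The main obstacle, and the only delicate point, is to show $\mathbf{u}_c^\ast\cdot\mathbf{n}_{cm}=0$ on $\Gamma_{cm}$ from the vanishing of $\mathbf{u}_m^n$ in $\mathbf{L}^2(\Omega_m)$, because the matrix velocities lack an $\mathbf{H}^1$ bound. The key observation is that each $\mathbf{u}_m^n$ is divergence-free with $\mathbf{u}_m^n\cdot\mathbf{n}_m=0$ on $\Gamma_m$, so $\mathbf{u}_m^n$ is bounded in $\mathbf{H}(\mathrm{div};\Omega_m)$ and $\mathbf{u}_m^n\to\mathbf{0}$ strongly in $\mathbf{L}^2(\Omega_m)$, which implies $\mathbf{u}_m^n\cdot\mathbf{n}_{cm}\to 0$ in $(H^{1/2}_{00}(\Gamma_{cm}))'$ via the duality identity quoted in Section~2.1. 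On the other hand $\mathbf{u}_c^n\cdot\mathbf{n}_{cm}\to\mathbf{u}_c^\ast\cdot\mathbf{n}_{cm}$ in $L^2(\Gamma_{cm})$, and the interface relation $\mathbf{u}_c^n\cdot\mathbf{n}_{cm}=\mathbf{u}_m^n\cdot\mathbf{n}_{cm}$ built into $\mathbf{Z}$ forces $\mathbf{u}_c^\ast\cdot\mathbf{n}_{cm}=0$ by uniqueness of limits in $(H^{1/2}_{00}(\Gamma_{cm}))'$. Thus $\mathbf{u}_c^\ast$ is a rigid motion that vanishes on $\Gamma_{cm}$ (and on $\Gamma_c$ when $|\Gamma_c|>0$); since $|\Gamma_{cm}|>0$ by assumption, the rigid motion must be identically zero, contradicting $\|\mathbf{u}_c^\ast\|_{\mathbf{L}^2(\Omega_c)}=1$. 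This proves \eqref{eq:poincare-plan}, and combining with Korn's inequality gives $\|\mathbf{u}\|_\natural\le C\|\mathbf{u}\|_{\mathbf{Z}}$, completing the equivalence.
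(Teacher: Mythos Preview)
Your proof is correct and follows essentially the same compactness/contradiction strategy as the paper: normalize, extract a weak $\mathbf{H}^1(\Omega_c)$ limit $\mathbf{u}_c^\ast$, use $\|\mathbf{u}_n\|_{\mathbf{Z}}\to 0$ to force both $\mathbb{D}(\mathbf{u}_c^\ast)=0$ and $\mathbf{u}_c^\ast|_{\Gamma_{cm}}=\mathbf{0}$ (the normal part via the $\mathbf{H}(\mathrm{div})$ trace of $\mathbf{u}_m^n\to 0$ and the interface constraint), and conclude $\mathbf{u}_c^\ast=\mathbf{0}$. The only cosmetic differences are that the paper treats the case $|\Gamma_c|>0$ separately via \eqref{kornsimp} and closes the contradiction by invoking \eqref{kornsimp} on $\mathbf{u}_{c_\infty}$, whereas you argue directly that a rigid motion vanishing on $\Gamma_{cm}$ (with $|\Gamma_{cm}|>0$) must be identically zero; also, your phrase ``continuity of the trace'' should read ``compactness of the trace $\mathbf{H}^1(\Omega_c)\hookrightarrow\mathbf{L}^2(\Gamma_{cm})$'' to justify the strong convergence claimed.
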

\begin{proof}
The case that $\Gamma_c$ has positive measure is trivial in view of Korn's inequality \eqref{kornsimp}. Below we focus on the situation where $\Omega_m$ encloses completely $\Omega_c$.
It is clear from Korn's inequality \eqref{korn} and the trace theorem that the following quantity defines an equivalent norm on $\mathbf{Z}$
\begin{align}
|\!|\!|\mathbf{u}|\!|\!|^2:=\|\mathbb{D}(\mathbf{u}_c)\|_{\mathbf{L}^2(\Omega_c)}^2
+\|\mathbf{u}_c\|_{\mathbf{L}^2(\Omega_c)}^2+\sum_{i=1}^{d-1}\|\mathbf{u}_c \cdot \btau_i\|_{L^2(\Gamma_{cm})}^2+\|\mathbf{u}_m\|_{\mathbf{L}^2(\Omega_m)}^2.\label{equinormb}
\end{align}
One only needs to prove there exists a constant $C$ independent of the function $\ub$ such that
\begin{align*}
|\!|\!|\mathbf{u}|\!|\!| \leq C \|\mathbf{u}\|_{\mathbf{Z}},\quad  \forall\, \mathbf{u} \in \mathbf{Z}.
\end{align*}
Suppose by contradiction that for a sequence $\{\mathbf{u}_n\} $ in $\mathbf{Z}$ it holds
\begin{align}
|\!|\!|\mathbf{u}_n|\!|\!| \geq n \|\mathbf{u}_n\|_{\mathbf{Z}}.\label{nnn}
\end{align}
By homogeneity, we may normalize $|\!|\!|\mathbf{u}_n|\!|\!|=1$. Then $\{\mathbf{u}_n\}$ is a bounded sequence in $\mathbf{Z}$. There exists a subsequence, still denoted by $\{\mathbf{u}_n\}$, such that $\mathbf{u}_n $ converges to $\mathbf{u}_\infty$ weakly in $\mathbf{Z}$. In particular, one has by Sobolev compact embedding $\mathbf{u}_{c_n}:=\ub_n|_{\Omega_c}$ converges to $\mathbf{u}_{c_\infty}$ strongly in $\mathbf{L}^2(\Omega_c)$. On the other hand, due to \eqref{nnn},
 \be
\|\mathbf{u}_n\|_{\mathbf{Z}}\to 0.\label{condiv}
 \ee
 It follows from the definitions \eqref{equinorm} and \eqref{equinormb} that $\|\mathbf{u}_{c_n}\|_{\mathbf{L}^2(\Omega_c)}$ converges to $1$, which implies
\begin{align}\label{temp1}
\|\mathbf{u}_{c_\infty}\|_{\mathbf{L}^2(\Omega_c)}=1.
\end{align}
Using the facts that $\mathbf{u}_{m_n}:=\ub_n|_{\Omega_m} \in \mathbf{H}_{m, \mathrm{div}}$, \eqref{condiv} and the trace theorem, we see that
\begin{align*}
\mathbf{u}_{m_n} \cdot \mathbf{n}_{cm}\big|_{\Gamma_{cm}} \rightarrow 0,\quad  \text{ in } H^{-\frac{1}{2}}(\Gamma_{cm}).
\end{align*}
Since $\mathbf{u}_n \in \mathbf{Z}$, by the continuity condition on the interface $\Gamma_{cm}$, one concludes
\begin{align*}
\mathbf{u}_{c_n} \cdot \mathbf{n}_{cm}\big|_{\Gamma_{cm}}=\mathbf{u}_{m_n} \cdot \mathbf{n}_{cm}\big|_{\Gamma_{cm}} \rightarrow 0, \quad \text{ in } H^{\frac{1}{2}}(\Gamma_{cm}).
\end{align*}
On the other hand, \eqref{condiv} implies $\|\mathbf{u}_{c_n}\cdot \btau_i\|_{L^2(\Gamma_{cm})} \rightarrow 0$ $(i=1,...,d-1)$. As a consequence of the above estimates and the fact that $\ub_{c_\infty}$ is the weak limit of $\ub_{c_n}$ in $\mathbf{H}^1(\Omega_c)$, we obtain
\begin{align}\label{temp2}
\mathbf{u}_{c_\infty}\big|_{\Gamma_{cm}}=\mathbf{0}.
\end{align}
Finally, by the weak lower semi-continuity of norm, one has
\begin{align}\label{temp3}
\|\mathbb{D}(\mathbf{u}_{c_\infty})\|_{\mathbf{L}^2(\Omega_c)} \leq \liminf_{n\rightarrow \infty} \|\mathbb{D}(\mathbf{u}_{c_n})\|_{\mathbf{L}^2(\Omega_c)}=0.
\end{align}
By virtue of \eqref{temp2} and \eqref{temp3}, we infer from the Korn's inequality \eqref{kornsimp} that
\begin{align*}
\|\mathbf{u}_{c_\infty}\|_{\mathbf{L}^2(\Omega_c)}=0.
\end{align*}
This leads to a contradiction with \eqref{temp1}. The proof is complete.
\end{proof}
Now we return to the proof of Theorem \ref{thmEx}. It follows easily from Lemma \ref{LMC} and the definition of $\widehat{\mathbf{u}}_c^\delta, \widehat{\mathbf{u}}_m^\delta$ that
\begin{align*}
\widehat{\mathbf{u}}_m^\delta \in \mathbf{H}_{m, \mathrm{div}}, \quad \widehat{\mathbf{u}}_m^\delta \cdot \mathbf{n}_{cm}= \widehat{\mathbf{u}}_c^\delta \cdot \mathbf{n}_{cm} \text{ in } H^{\frac{1}{2}}(\Gamma_{cm}).
\end{align*}
Thus, the equivalent norm \eqref{equinorm} in Lemma \ref{equinorml} is applicable, and one can derive estimate on $\|\widehat{\mathbf{u}}_c^{\delta}\|_{L^2(0,T; \mathbf{H}^1(\Omega_c))}$ from the energy estimate \eqref{ee1}. Then one can conclude the proof as in the case of $\varpi >0$.

The proof of Theorem \ref{thmEx} is complete.


\section{Weak-strong Uniqueness}\setcounter{equation}{0}

In this section, we prove the uniqueness result Theorem \ref{thmuni}.
Below we just give the proof for $d=3$, while the proof for $d=2$ can be obtained with minor modifications under certain weaker regularity assumptions.

First, we recall that the finite energy weak solution $(\ub_c, P_m, \ub_m, \vp, \mu)$ to CHSD system \eqref{HSCH-NSCH1}--\eqref{IBCi7} satisfy the energy inequality \eqref{Energyinq}, i.e.,
\bea
&& \int_{\Omega_c}\frac{\varpi}{2}|\ub_c(t)|^2 dx+\int_{\Omega}\left[\frac{\epsilon}{2}|\nabla\varphi|^2+\frac{1}{\epsilon}F(\varphi)\right]dx\nonumber\\
&& + \int_0^t \int_{\Omega_m} \nu(\varphi_m)\Pi^{-1}|\ub_m|^2dx d\tau
+\int_0^t \int_{\Omega_c}2\nu(\vp_c)|\mathbb{D}(\ub_c)|^2dxd\tau
\non\\
&& +\int_0^ t\int_{\Omega}{\rm M}(\vp)|\nabla\mu(\vp)|^2dx d\tau
+\frac{\alpha_{BJSJ}}{\sqrt{{\rm trace}(\Pi)}}\sum_{i=1}^{d-1}\int_0^t \int_{\Gamma_{cm}} \nu(\vp_m)|\ub_c\cdot\btau_i|^2 dS d\tau\nonumber\\
&\leq& \int_{\Omega_c}\frac{\varpi}{2}|\ub_0|^2 dx+\int_{\Omega}\left[\frac{\epsilon}{2}|\nabla\varphi_0|^2+\frac{1}{\epsilon}F(\varphi_0)\right]dx.\label{EE1}
\eea
On the other hand, 
the regular solution $(\tilde{\ub}_c, \tilde{P}_m, \tilde{\ub}_m, \tilde{\vp}, \tilde{\mu})$ are allowed to be used as the test functions for the CHSD system and the following energy equality holds
\bea
&& \int_{\Omega_c}\frac{\varpi}{2}|\tilde{\ub}_c(t)|^2 dx+\int_{\Omega}\left[\frac{\epsilon}{2}|\nabla\tilde{\varphi}|^2
+\frac{1}{\epsilon}F(\tilde{\varphi})\right]dx\nonumber\\
&& + \int_0^t \int_{\Omega_m} \nu(\tilde{\varphi}_m)\Pi^{-1}|\tilde{\ub}_m|^2dx d\tau
+\int_0^t \int_{\Omega_c}2\nu(\tilde{\vp}_c)|\mathbb{D}(\tilde{\ub}_c)|^2dxd\tau
\non\\
&& +\int_0^ t\int_{\Omega}{\rm M}(\tilde{\vp})|\nabla \tilde{\mu}(\tilde{\vp})|^2dx d\tau
+\frac{\alpha_{BJSJ}}{\sqrt{{\rm trace}(\Pi)}}\sum_{i=1}^{d-1}\int_0^t \int_{\Gamma_{cm}} \nu(\tilde{\vp}_m)|\tilde{\ub}_c\cdot\btau_i|^2 dS d\tau\nonumber\\
&=& \int_{\Omega_c}\frac{\varpi}{2}|\ub_0|^2 dx+\int_{\Omega}\left[\frac{\epsilon}{2}|\nabla\varphi_0|^2+\frac{1}{\epsilon}F(\varphi_0)\right]dx.\label{EE2}
\eea

Next, taking $\tilde{\ub}$ and $-\epsilon \Delta \tilde{\varphi}$ as test functions in the weak formulation for the finite energy weak solution $(\ub_c, P_m, \ub_m, \vp, \mu)$ and using the equations for $\tilde{\ub}_c$, $\tilde{\vp}$, we obtain that
\bea
&& \varpi(\ub_c(t), \tilde{\ub}_c(t))_c- \varpi \int_{\Omega_c}  |\ub_0|^2 dx\nonumber\\
&=&\varpi \int_0^t (\ub_c, \partial_t \tilde{\ub}_c)_c d\tau -\frac{\alpha_{BJSJ}}{\sqrt{{\rm trace}(\Pi)}}\sum_{i=1}^{d-1}\int_0^t \int_{\Gamma_{cm}} \nu(\vp_m)(\ub_c\cdot\btau_i) (\tilde{\ub}_c\cdot\btau_i) dS d\tau \nonumber\\
 && -\int_0^t\int_{\Gamma_{cm}}P_m(\tilde{\ub}_c\cdot\mathbf{n}_{cm}) dS d\tau
  -\int_0^t \int_{\Omega_c} 2\nu(\vp_c)\mathbb{D}(\ub_c): \mathbb{D}(\tilde{\ub}_c) dx d\tau\non\\
  &&+\int_0^t (\mu_c\nabla \vp_c,  \tilde{\ub}_c)_c d\tau\non\\
  &=&-\frac{\alpha_{BJSJ}}{\sqrt{{\rm trace}(\Pi)}}\sum_{i=1}^{d-1}\int_0^t \int_{\Gamma_{cm}} \nu(\vp_m)(\ub_c\cdot\btau_i) (\tilde{\ub}_c\cdot\btau_i) dS d\tau \nonumber\\
  && -\frac{\alpha_{BJSJ}}{\sqrt{{\rm trace}(\Pi)}}\sum_{i=1}^{d-1}\int_0^t \int_{\Gamma_{cm}} \nu(\tilde{\vp}_m)(\ub_c\cdot\btau_i) (\tilde{\ub}_c\cdot\btau_i) dS d\tau \nonumber\\
 && -\int_0^t\int_{\Gamma_{cm}}P_m(\tilde{\ub}_c\cdot\mathbf{n}_{cm}) dS d\tau
 -\int_0^t\int_{\Gamma_{cm}}\tilde{P}_m(\ub_c\cdot\mathbf{n}_{cm}) dS d\tau
  \non\\
  && -\int_0^t \int_{\Omega_c} 2\nu(\vp_c)\mathbb{D}(\ub_c): \mathbb{D}(\tilde{\ub}_c) dx d\tau
  -\int_0^t \int_{\Omega_c} 2\nu(\tilde{\vp}_c)\mathbb{D}(\ub_c): \mathbb{D}(\tilde{\ub}_c) dx d\tau\non\\
  &&+\int_0^t (\mu_c\nabla \vp_c,  \tilde{\ub}_c)_c d\tau +\int_0^t (\tilde{\mu}_c\nabla \tilde{\vp}_c,  \ub_c)_c d\tau, \label{EE3}
\eea
 \bea
 && \epsilon \int_\Omega \nabla \vp(t)\cdot\nabla \tilde{\vp}(t) dx-\epsilon \int_\Omega |\nabla \vp_0|^2 dx\non\\
 &=& \epsilon \int_0^t \int_\Omega M(\vp)\nabla \mu \cdot \nabla \Delta \tilde{\vp}dx d\tau
  +\epsilon \int_0^t \int_\Omega M(\tilde{\vp})\nabla \tilde{\mu} \cdot \nabla \Delta \vp dx d\tau\non\\
  && +\epsilon\int_0^t \int_\Omega (\ub \cdot \nabla \vp) \Delta \tilde{\vp} dx d\tau
 +\epsilon\int_0^t \int_\Omega (\tilde{\ub} \cdot \nabla \tilde{\vp}) \Delta \vp dx d\tau,\label{EE4}
 \eea
 \bea
&& \int_0^t \int_{\Omega_m} \nu(\varphi_m)\Pi^{-1} \ub_m \cdot \tilde{\ub}_m dx d\tau +\int_0^t \int_{\Omega_m} \nu(\tilde{\varphi}_m)\Pi^{-1} \ub_m \cdot \tilde{\ub}_m dx d\tau \non\\
&=& -\int_0^t\left(\big(\nabla P_m-\mu_m\nabla\varphi_m\big), \tilde{\mathbf{u}}_m\right)_m d\tau-\int_0^t\left(\big(\nabla \tilde{P}_m-\tilde{\mu}_m\nabla\tilde{\varphi}_m\big), \mathbf{u}_m\right)_m d\tau\non\\
&=&\int_0^t\int_{\Gamma_{cm}}P_m(\tilde{\ub}_c\cdot\mathbf{n}_{cm}) dS d\tau + \int_0^t\int_{\Gamma_{cm}} \tilde{P}_m(\ub_c\cdot\mathbf{n}_{cm}) dS d\tau\non\\
&& +\int_0^t (\mu_m\nabla \vp_m,  \tilde{\ub}_m)_m d\tau+\int_0^t (\tilde{\mu}_m\nabla \tilde{\vp}_m,  \ub_m)_m d\tau.\label{EE5}
 \eea
Adding \eqref{EE1} with \eqref{EE2} and subtracting the sum of \eqref{EE3}--\eqref{EE5} from the resultant, by a direct computation we obtain that
\bea
&& \frac{\varpi}{2}\int_{\Omega_c}|\ub_c(t)-\tilde{\ub}_c(t)|^2dx + \frac{\epsilon}{2}\int_\Omega |\nabla \vp(t)-\nabla \tilde{\vp}(t)|^2 dx\non\\
&&+\int_0^t \int_{\Omega_c}2\nu(\vp_c)|\mathbb{D}(\ub_c)-\mathbb{D}(\tilde{\ub}_c)|^2dxd\tau\non\\
&& +\int_0^t\int_{\Omega_m}\nu(\vp_m)\Pi^{-1}|\ub_m-\tilde{\ub}_m|^2dx d\tau \non\\
&& +\epsilon^2\int_0^t \int_\Omega \mathrm{M}(\vp)|\nabla\Delta \vp -\nabla \Delta\tilde{\vp}|^2dx d\tau \non\\
&&+\frac{\alpha_{BJSJ}}{\sqrt{{\rm trace}(\Pi)}}\sum_{i=1}^{d-1}\int_0^t \int_{\Gamma_{cm}} \nu(\vp)|(\ub_c-\tilde{\ub}_c)\cdot\btau_i|^2 dS d\tau\non\\
&\leq &  -\int_0^t \int_{\Omega_c}2(\nu(\tilde{\vp}_c)-\nu(\vp_c))\mathbb{D}(\tilde{\ub}_c):(\mathbb{D}(\tilde{\ub}_c)-\mathbb{D}(\ub_c)) dxd\tau\non\\
&&-\int_0^t \int_{\Omega_m} (\nu(\tilde{\vp}_m)-\nu(\vp_m))\Pi^{-1}\tilde{\ub}_m (\tilde{\ub}_m-\ub_m) dxd\tau\non\\
&&-\epsilon^2\int_0^t\int_\Omega (\mathrm{M}(\tilde{\vp})-\mathrm{M}(\vp))\nabla \Delta \tilde{\vp}\cdot (\nabla\Delta \tilde{\vp}-\nabla \Delta \vp) dx d\tau\non\\
&&-\frac{\alpha_{BJSJ}}{\sqrt{{\rm trace}(\Pi)}}\sum_{i=1}^{d-1}\int_0^t \int_{\Gamma_{cm}} (\nu(\tilde{\vp}_m)-\nu(\vp_m))(\tilde{\ub}_c\cdot\btau_i)((\tilde{\ub}_c-\ub_c)\cdot\btau_i) dS d\tau\non\\
&& +2\int_0^t \int_\Omega (\mathrm{M}(\vp) \nabla \Delta \vp\cdot \nabla f(\vp) + \mathrm{M}(\tilde{\vp}) \nabla \Delta \tilde{\vp}\cdot \nabla f(\tilde{\vp})) dx d\tau\non\\
&&-\int_0^t\int_\Omega (\mathrm{M}(\vp) \nabla f(\vp) \cdot \nabla \Delta \tilde{\vp} +\mathrm{M}(\tilde{\vp}) \nabla f(\tilde{\vp})\cdot \nabla \Delta \vp) dx d\tau\non\\
&&-\frac{1}{\epsilon^2} \int_0^t \int_\Omega (\mathrm{M}(\vp)|\nabla f(\vp)|^2  + \mathrm{M}(\tilde{\vp})|\nabla f(\tilde{\vp})|^2) dx d\tau \non\\
&& + \frac{1}{\epsilon} \int_{\Omega} (2F(\varphi_0)-F(\vp)-F(\tilde{\varphi}))dx\non\\
&& +\epsilon\int_0^t\int_\Omega (\Delta \vp\nabla \vp \cdot \tilde{\ub} + \Delta \tilde{\vp}\nabla \tilde{\vp}\cdot \ub-\ub\cdot\nabla \vp \Delta \tilde{\vp}- \tilde{\ub}\cdot \nabla \tilde{\vp}\Delta \vp) dxd\tau\non\\
&:=& \sum_{j=1}^9 I_j,\label{EE6}
\eea
where we have used the incompressibility condition and the fact
$$\int_\Omega (\ub\cdot \nabla \vp) f(\vp) dx=\int_\Omega \ub\cdot \nabla F(\vp) dx=0.$$
Using the mass conservation property $\int_\Omega (\tilde{\vp}-\vp) dx=0$ (due to the choice of initial data),  the Poincar\'e inequality, the Sobolev embedding theorem and the Gagliardo-Nirenberg inequality, we have the following estimates for $\phi=\tilde{\vp}-\vp$
\bea
 \|\phi\|_{L^\infty(\Omega)} &\leq &C (\|\nabla \Delta \phi\|_{\mathbf{L}^2(\Omega)}^\frac14\|\phi\|_{L^6(\Omega)}^\frac34+\| \phi\|_{L^6(\Omega)})\non\\
 &\leq& C (\|\nabla \Delta \phi\|_{\mathbf{L}^2(\Omega)}^\frac14\|\nabla \phi\|_{\mathbf{L}^2(\Omega)}^\frac34+\|\nabla \phi\|_{\mathbf{L}^2(\Omega)}),
 \non
 \eea
 \be
 \|\Delta \phi\|_{L^3(\Omega)}\leq C\big(\|\nabla \Delta \phi\|_{\mathbf{L}^2(\Omega)}^\frac34\|\nabla \phi\|_{\mathbf{L}^2(\Omega)}^\frac14+\|\nabla \phi\|_{\mathbf{L}^2(\Omega)}\big),\non
 \ee
 \be
\|\nabla \phi\|_{\mathbf{L}^6(\Omega)}
  \leq  C \big(\|\nabla \Delta \phi\|_{\mathbf{L}^2(\Omega)}^\frac12\|\nabla \phi\|_{\mathbf{L}^2(\Omega)}^\frac12+\|\nabla \phi\|_{\mathbf{L}^2(\Omega)}\big).\non
\ee
Combining the above estimates with the Young inequality, we get
\bea
I_1&\leq& C\int_0^t\|\nu(\tilde{\vp}_c)-\nu(\vp_c)\|_{L^\infty(\Omega_c)}\|\mathbb{D}(\tilde{\ub}_c)\|_{\mathbf{L}^2(\Omega_c)}
\|\mathbb{D}(\tilde{\ub}_c)-\mathbb{D}(\ub_c)\|_{\mathbf{L}^2(\Omega_c)} d\tau \non\\
&\leq& C\int_0^t \|\tilde{\vp}-\vp\|_{L^\infty(\Omega)}\|\mathbb{D}(\tilde{\ub}_c)\|_{\mathbf{L}^2(\Omega_c)}
\|\mathbb{D}(\tilde{\ub}_c)-\mathbb{D}(\ub_c)\|_{\mathbf{L}^2(\Omega_c)} d\tau\non\\
&\leq& C\int_0^t(\|\nabla \Delta (\tilde{\vp}-\vp)\|_{\mathbf{L}^2(\Omega)}^\frac14\|\nabla (\tilde{\vp}-\vp)\|_{\mathbf{L}^2(\Omega)}^\frac34+\|\nabla (\tilde{\vp}-\vp)\|_{\mathbf{L}^2(\Omega)}) \non\\
&&\quad \times \|\mathbb{D}(\tilde{\ub}_c)\|_{\mathbf{L}^2(\Omega_c)}
\|\mathbb{D}(\tilde{\ub}_c)-\mathbb{D}(\ub_c)\|_{\mathbf{L}^2(\Omega_c)} d\tau\non\\
&\leq& \zeta\int_0^t \|\nabla \Delta (\tilde{\vp}-\vp)\|_{\mathbf{L}^2(\Omega)}^2 d\tau
 +\zeta \int_0^t \|\mathbb{D}(\tilde{\ub}_c)-\mathbb{D}(\ub_c)\|_{\mathbf{L}^2(\Omega_c)}^2 d\tau \non\\
 && +C \int_0^t \big(\|\mathbb{D}(\tilde{\ub}_c)\|_{\mathbf{L}^2(\Omega_c)}^\frac83+1\big)\|\nabla (\tilde{\vp}-\vp)\|_{\mathbf{L}^2(\Omega)}^2 d\tau,\non
\eea
where $\zeta>0$ is a small constant to be chosen later. In a similar manner, we have the following estimates for $I_2$, $I_3$ and $I_4$:
\bea
I_2&\leq& \zeta \int_0^t \|\nabla \Delta (\tilde{\vp}-\vp)\|_{\mathbf{L}^2(\Omega)}^2 d\tau+\zeta\int_0^t\|\tilde{\mathbf{u}}_m-\ub_m\|_{\mathbf{L}^2(\Omega_m)}^2d\tau \non\\
&& +C\int_0^t \big(\|\tilde{\ub}_m\|_{\mathbf{L}^2(\Omega_m)}^\frac83+1\big)\|\nabla (\tilde{\vp}-\vp)\|_{\mathbf{L}^2(\Omega)}^2 d\tau,\non
\eea
\bea
I_3&\leq& \zeta \int_0^t \|\nabla \Delta (\tilde{\vp}-\vp)\|_{\mathbf{L}^2(\Omega)}^2 d\tau\non\\
 && +C\int_0^t \big(\|\nabla \Delta \tilde{\vp}\|_{\mathbf{L}^2(\Omega)}^\frac83+1\big)\|\nabla (\tilde{\vp}-\vp)\|_{\mathbf{L}^2(\Omega)}^2 d\tau,\non
\eea
\bea
I_4&\leq& \zeta\int_0^t \|\nabla \Delta (\tilde{\vp}-\vp)\|_{\mathbf{L}^2(\Omega)}^2 d\tau + \zeta\sum_{i=1}^{d-1}\int_0^t \int_{\Gamma_{cm}}|(\tilde{\mathbf{u}}_c-\ub_c)\cdot\btau_i|^2dS d\tau \non \\
&& +C \int_0^t \big(\|\tilde{\ub}_c\|_{\mathbf{L}^2(\Omega_c)}^\frac83+1\big)\|\nabla (\tilde{\vp}-\vp)\|_{\mathbf{L}^2(\Omega)}^2 d\tau,\non
\eea
Since $\vp\in L^\infty(0,T; H^1(\Omega))\cap L^2(0,T; H^3(\Omega))$, by the Gagliardo-Nirenberg inequality we deduce that
\bea
\int_0^T \|f(\vp)\|_{H^1(\Omega)}^4dt
&\leq& C\int_0^T (\|\vp\|_{H^1(\Omega)}^4+ \|\vp\|_{L^6(\Omega)}^{12}+ 81\|\vp^2\nabla \vp\|_{\mathbf{L}^2(\Omega)}^4) dt\non\\
&\leq& C_T+C\int_0^t \|\vp\|_{L^\infty(\Omega)}^8\|\nabla \vp\|_{\mathbf{L}^2(\Omega)}^4 dt\non\\
&\leq& C_T+C\int_0^T (\|\nabla \Delta \vp\|_{\mathbf{L}^2(\Omega)}^\frac14\|\vp\|_{L^6(\Omega)}^\frac34+\|\vp\|_{L^6(\Omega)})^8dt\non\\
&\leq& C_T +C \int_0^T \|\nabla \Delta \vp\|_{\mathbf{L}^2(\Omega)}^2 dt\non\\
&\leq & C_T,\non
\eea
which implies $f(\vp)\in L^4(0,T; H^1(\Omega))\subset L^\frac83(0,T; H^1(\Omega))$. Thus we can take $f(\vp)=\vp^3-\vp$ as a test function in the Cahn-Hilliard equation for $\vp$. Since the nonlinear part $\vp^3$ is monotone
increasing, similar to \cite[Proposition 4.2]{CKRS}, we see that the dual product satisfies $\langle \vp_t,f(\vp)\rangle_{(H^1)', H^1}=\frac{d}{dt}\int_\Omega F(\vp)dx$ for a.e. $t\in(0,T)$. Then integrating with respect to $t$ we deduce that
\bea
&& \int_\Omega F(\vp)dx -\int_\Omega F(\vp_0)dx\non\\
&=&
\epsilon\int_0^t \int_\Omega \mathrm{M}(\vp)\nabla\Delta \vp \cdot \nabla f(\vp) dxd\tau -\frac{1}{\epsilon}\int_0^t \int_\Omega \mathrm{M}(\vp)|\nabla f(\vp)|^2 dx d\tau,\non
\eea
In a similar way, we have the same identity for the regular solution $\tilde{\vp}$
 \bea
&& \int_\Omega F(\tilde{\vp})dx -\int_\Omega F(\vp_0)dx\non\\
&=&
\epsilon\int_0^t \int_\Omega \mathrm{M}(\tilde{\vp})\nabla\Delta \tilde{\vp} \cdot \nabla f(\tilde{\vp}) dxd\tau -\frac{1}{\epsilon}\int_0^t \int_\Omega \mathrm{M}(\tilde{\vp})|\nabla f(\tilde{\vp})|^2 dx d\tau,\non
\eea
As a consequence, we obtain that
\bea
&& I_5+I_6+I_7+I_8\non\\
&=& \int_0^t \int_\Omega [-\mathrm{M}(\vp) \nabla \Delta (\tilde{\vp}-\vp)\cdot \nabla f(\vp) + \mathrm{M}(\tilde{\vp}) \nabla \Delta (\tilde{\vp}-\vp)\cdot \nabla f(\tilde{\vp})] dx d\tau\non\\
&=& \int_0^t \int_\Omega [\mathrm{M}(\tilde{\vp})-\mathrm{M}(\vp)]\nabla \Delta (\tilde{\vp}-\vp) \cdot \nabla f(\tilde{\vp}) dxd\tau\non\\
&&+ \int_0^t \int_\Omega \mathrm{M}(\vp)\nabla \Delta (\tilde{\vp}-\vp)\cdot (\nabla f(\tilde{\vp})-\nabla f(\vp)) dxd\tau\non\\
&:=& J_{1}+J_{2}.
\eea
The term $J_1$ can be estimated like $I_1$ such that
\bea
J_1&\leq& C\int_0^t\|\mathrm{M}(\tilde{\vp})-\mathrm{M}(\vp)\|_{L^\infty(\Omega)}
\|\nabla \Delta (\tilde{\vp}-\vp)\|_{\mathbf{L}^2(\Omega)}
\|\nabla f(\tilde{\vp})\|_{\mathbf{L}^2(\Omega)} d\tau \non\\
&\leq& C\int_0^t \|\tilde{\vp}-\vp\|_{L^\infty(\Omega)}\|\nabla \Delta (\tilde{\vp}-\vp)\|_{\mathbf{L}^2(\Omega)} \|\nabla f(\tilde{\vp})\|_{\mathbf{L}^2(\Omega)} d\tau\non\\
&\leq& C\int_0^t(\|\nabla \Delta (\tilde{\vp}-\vp)\|_{\mathbf{L}^2(\Omega)}^\frac14\|\nabla (\tilde{\vp}-\vp)\|_{\mathbf{L}^2(\Omega)}^\frac34+\|\nabla (\tilde{\vp}-\vp)\|_{\mathbf{L}^2(\Omega)}) \non\\
&&\quad \times
\|\nabla \Delta (\tilde{\vp}-\vp)\|_{\mathbf{L}^2(\Omega)} (\|\tilde{\vp}\|_{L^\infty}^2+1)\|\nabla \tilde{\vp}\|_{\mathbf{L}^2(\Omega)} d\tau \non\\
&\leq& \zeta\int_0^t \|\nabla \Delta (\tilde{\vp}-\vp)\|_{\mathbf{L}^2(\Omega)}^2 d\tau \non\\
&&+C \int_0^t \big(\|\tilde{\vp}\|_{L^\infty(\Omega)}^\frac{16}{3}+1\big)\|\nabla \tilde{\vp}\|_{\mathbf{L}^2(\Omega)}^\frac{8}{3}\|\nabla (\tilde{\vp}-\vp)\|_{\mathbf{L}^2(\Omega)}^2 d\tau\non \\
&& +C \int_0^t \big(\|\tilde{\vp}\|_{L^\infty(\Omega)}^4+1\big)\|\nabla \tilde{\vp}\|_{\mathbf{L}^2(\Omega)}^2  \|\nabla (\tilde{\vp}-\vp)\|_{\mathbf{L}^2(\Omega)}^2 d\tau,\non\\
 &\leq&  \zeta\int_0^t \|\nabla \Delta (\tilde{\vp}-\vp)\|_{\mathbf{L}^2(\Omega)}^2 d\tau
 +C \int_0^t \big(\|\nabla \Delta \tilde{\vp}\|_{L^2(\Omega)}^\frac{4}{3}+1\big) \|\nabla (\tilde{\vp}-\vp)\|_{\mathbf{L}^2(\Omega)}^2 d\tau,\non
\eea
For $J_2$, it holds
\bea
J_2&\leq& \int_0^t \|{\rm M}(\vp)\|_{L^\infty(\Omega)}\|\nabla \big(f(\tilde{\varphi})-f(\varphi)\big)\|_{\mathbf{L}^2(\Omega)}\|\nabla\Delta (\tilde{\vp}-\vp)\|_{\mathbf{L}^2(\Omega)} d\tau \non\\
&\leq& C\int_0^t\|\nabla (\tilde{\vp}-\vp)\|_{\mathbf{L}^2(\Omega)}\|\nabla\Delta (\tilde{\vp}-\vp)\|_{\mathbf{L}^2(\Omega)} d\tau\non\\
&& +C\int_0^t \|\tilde{\vp}^2\|_{L^\infty(\Omega)}\|\nabla(\tilde{\vp}-\vp)\|_{\mathbf{L}^2(\Omega)}\|\nabla\Delta (\tilde{\vp}-\vp)\|_{\mathbf{L}^2(\Omega)} d\tau \non\\
&& +C\int_0^t\|\tilde{\vp}+ \vp\|_{L^\infty(\Omega)}\|\nabla \vp\|_{\mathbf{L}^2(\Omega)} \|\tilde{\vp}- \vp\|_{L^\infty(\Omega)}\|\nabla\Delta (\tilde{\vp}-\vp)\|_{\mathbf{L}^2(\Omega)} d\tau \non\\
&\leq& C\int_0^t (1+\|\tilde{\vp}\|^2_{L^\infty(\Omega)})\|\nabla (\tilde{\vp}-\vp)\|_{\mathbf{L}^2(\Omega)}\|\nabla\Delta (\tilde{\vp}-\vp)\|_{\mathbf{L}^2(\Omega)} d\tau \non\\
&&+ C \int_0^t \big(\|\tilde{\vp}\|_{L^\infty(\Omega)}+ \|\vp\|_{L^\infty(\Omega)}\big)\|\nabla\vp\|_{\mathbf{L}^2(\Omega)} \|\nabla\Delta (\tilde{\vp}-\vp)\|_{\mathbf{L}^2(\Omega)}\non\\
&&
\qquad  \times\big(\| \nabla \Delta (\tilde{\vp}-\vp)\|_{\mathbf{L}^2(\Omega)}^\frac14\| \nabla (\tilde{\vp}-\vp)\|_{\mathbf{L}^2(\Omega)}^\frac34+ \| \nabla (\tilde{\vp}-\vp)\|_{\mathbf{L}^2(\Omega)}\big) d\tau \non\\
&\leq& \zeta \int_0^t \|\nabla\Delta (\tilde{\vp}-\vp)\|^2_{\mathbf{L}^2(\Omega)} d\tau \non\\
&&+ C\int_0^t\big(\| \nabla \Delta \tilde{\vp}\|_{\mathbf{L}^2(\Omega)}^2
+\| \nabla \Delta \vp\|_{\mathbf{L}^2(\Omega)}^2+1\big)\|\nabla (\tilde{\vp}-\vp)\|^2_{\mathbf{L}^2(\Omega)} d\tau.\non
\eea

Now we estimate the last term $I_9$,
\bea
I_9&=& \epsilon\int_0^t\int_\Omega  \tilde{\ub}\cdot \nabla (\vp-\tilde{\vp})\Delta (\vp-\tilde{\vp})dxd\tau+\epsilon \int_0^t \int_\Omega \Delta \tilde{\vp}(\tilde{\ub}-\ub)\cdot \nabla(\vp-\tilde{\vp}) dx d\tau\non\\
&\leq& C \int_0^t \|\tilde{\ub}\|_{\mathbf{L}^2(\Omega)}\| \nabla (\vp-\tilde{\vp})\|_{\mathbf{L}^6(\Omega)}\|\Delta (\vp-\tilde{\vp})\|_{L^3(\Omega)} d\tau \non\\
&& +C\int_0^t \| \Delta \tilde{\vp}\|_{L^6(\Omega)}\|\tilde{\ub}-\ub\|_{\mathbf{L}^2(\Omega)}\| \nabla(\vp-\tilde{\vp})\|_{\mathbf{L}^3(\Omega)} d\tau\non\\
&\leq&
  C\int_0^t \|\tilde{\ub}\|_{\mathbf{L}^2(\Omega)} \big(\|\nabla \Delta (\vp-\tilde{\vp})\|_{\mathbf{L}^2(\Omega)}^\frac12\|\nabla (\vp-\tilde{\vp})\|_{\mathbf{L}^2(\Omega)}^\frac12+\|\nabla (\vp-\tilde{\vp})\|_{\mathbf{L}^2(\Omega)}\big)
  \non\\
  &&\quad \times
  \big(\|\nabla \Delta (\vp-\tilde{\vp})\|_{\mathbf{L}^2(\Omega)}^\frac34\|\nabla (\vp-\tilde{\vp})\|_{\mathbf{L}^2(\Omega)}^\frac14+\|\nabla (\vp-\tilde{\vp})\|_{\mathbf{L}^2(\Omega)}\big) d\tau \non\\
  && + C\int_0^t (\| \nabla \Delta \tilde{\vp}\|_{L^2(\Omega)}+\| \nabla \tilde{\vp}\|_{L^2(\Omega)})\|\tilde{\ub}-\ub\|_{\mathbf{L}^2(\Omega)}
  \non\\
  && \quad \times \big(\|\nabla \Delta (\vp-\tilde{\vp})\|_{\mathbf{L}^2(\Omega)}^\frac14\|\nabla (\vp-\tilde{\vp})\|_{\mathbf{L}^2(\Omega)}^\frac34+\|\nabla (\vp-\tilde{\vp})\|_{\mathbf{L}^2(\Omega)}\big)  d\tau\non\\
  &\leq& \zeta \int_0^t \|\nabla\Delta (\tilde{\vp}-\vp)\|^2_{\mathbf{L}^2(\Omega)} d\tau +\zeta \int_0^t  \|\tilde{\ub}-\ub\|_{\mathbf{L}^2(\Omega)}^2 d\tau\non\\
  && +C\int_0^t \big(\|\tilde{\ub}\|_{\mathbf{L}^2(\Omega)}^\frac83+\| \nabla\Delta \tilde{\vp}\|_{L^2(\Omega)}^\frac83+1\big) \|\nabla (\tilde{\vp}-\vp)\|^2_{\mathbf{L}^2(\Omega)} d\tau.
\eea

Combining the above estimates, using the equivalent norm $\|\ub\|_{\mathbf{Z}}$ given by \eqref{equinorm} in Lemma \ref{equinorml} and the assumptions (A1)--(A3), by taking $\zeta>0$ sufficiently small, we deduce that
\bea
&& \varpi\|(\tilde{\ub}_c-\ub_c)(t)\|^2_{\mathbf{L}^2(\Omega_c)}+\epsilon\|\nabla (\tilde{\vp}-\vp)(t)\|^2_{\mathbf{L}^2(\Omega)}\non\\
&&\ +\gamma_1\int_0^t \big(\|(\tilde{\ub}-\ub)(\tau)\|_{\mathbf{Z}}^2+\| \nabla \Delta (\tilde{\vp}-\vp)(\tau)\|_{\mathbf{L}^2(\Omega)}^2\big) d\tau\non\\
&\leq& \gamma_2\int_0^t h(\tau)\|\nabla (\tilde{\vp}-\vp)(\tau)\|_{\mathbf{L}^2(\Omega)}^2 d\tau, \label{gron}
\eea
where
$$
h(t)=\|\tilde{\ub}(t)\|_{\mathbf{Z}}^\frac83+ \|\nabla \Delta \tilde{\vp}(t)\|_{\mathbf{L}^2(\Omega)}^\frac83+\| \nabla \Delta \vp(t)\|_{\mathbf{L}^2(\Omega)}^2+1,
$$
and the constants $\gamma_1, \gamma_2>0$ may depend on the initial energy $\mathcal{E}(0)$  as well as the coefficients of the CHSD system.

Since by our assumption $(\ub_c, \vp)|_{t=0}=(\mathbf{0}, 0)$ and $h(t)\in L^1(0,T)$, then it follows from \eqref{gron} and the Gronwall inequality that for $t\in [0,T]$,
\be
\varpi\|(\tilde{\ub}_c-\ub_c)(t)\|^2_{\mathbf{L}^2(\Omega_c)}+\epsilon\|\nabla (\tilde{\vp}-\vp)(t)\|^2_{\mathbf{L}^2(\Omega)}=0 \label{uu1}
\ee
and then
\be
\int_0^T \|(\tilde{\ub}-\ub)(t)\|_{\mathbf{Z}}^2dt =0.\label{uu2}
\ee
Recalling the fact $\int_\Omega (\tilde{\vp}-\vp) dx=0$ for $t\in[0,T]$, by the Poincar\'e inequality and the definition of the norm $\|\cdot\|_{\mathbf{Z}}$ (see \eqref{equinorm}), we infer that
\be
(\ub_c, \ub_m, \vp)=(\tilde{\ub}_c, \tilde{\ub}_m, \tilde{\vp}).\label{uu3}
\ee

Finally, we remark that for the case of $\varpi=0$, one can proceed as above and conclude \eqref{uu1}, \eqref{uu2} with $\varpi=0$ in \eqref{uu1}, which again yield the uniqueness result \eqref{uu3}.

The proof of Theorem \ref{thmuni} is complete.

\section*{Acknowledgement.}
Wang and Han acknowledge the  support of  NSF (DMS1312701) and a Planning Grant from FSU.
Wu was partially supported by NSF of China 11371098 and ``Zhuo Xue" program of Fudan University.

\itemsep=0pt
\bibliography{newbib}
\bibliographystyle{plain}

\end{document}